\long\def\commentout#1{}
\newif\ifprint
\renewcommand{\mathbf}[1]{\bm{#1}} 
	\definecolor{linkred}{rgb}{0,0,0} 
	\definecolor{linkblue}{rgb}{0,0,0} 
	\definecolor{linkred}{rgb}{0.7,0.2,0.2}
	\definecolor{linkblue}{rgb}{0,0.2,0.6}
\numberwithin{equation}{section} 
\def\ps@handbook{\def\@oddhead{\hfill \leftmark \hfill\thepage }
\def\@evenhead{\thepage \hfill \rightmark \hfill}
\def\@oddfoot{}
\def\@evenfoot{}}
\def\@evenhead{}
\def\@oddfoot{}
\def\@evenfoot{\hfill\copyright\ China Higher Education Press}
\def\list#1#2{\ifnum \@listdepth >5\relax \@toodeep \else \global
\advance \@listdepth\@ne \fi \rightmargin \z@ \listparindent\z@
\itemindent\z@ \csname @list\romannumeral\the\@listdepth\endcsname
\def\@itemlabel{#1}\let\makelabel\@mklab \@nmbrlistfalse #2\relax
\@trivlist \parskip -\parsep \parindent\listparindent \advance
\linewidth -\rightmargin \advance\linewidth -\leftmargin \advance
\@totalleftmargin \leftmargin \parshape \@ne \@totalleftmargin
\linewidth \ignorespaces}
\renewcommand*\l@section{\@tocline{1}{0pt}{0em}{1.75em}{}}
\renewcommand*\l@subsection{\@tocline{2}{0pt}{1.75em}{2em}{}} 
\renewcommand{\theequation}{\thesection.\arabic{equation}}
\def\thebibliography#1{\section*{References}
\list{[\arabic{enumi}]}{\settowidth \labelwidth{[#1]} \leftmargin
\labelwidth \advance \leftmargin \labelsep \usecounter{enumi}}
\def\newblock{\hskip .11em plus .33em minus .07em} \sloppy
\clubpenalty 4000 \widowpenalty 4000 \sfcode`\.=1000 \relax}
\titleformat{\section}{\normalfont\large\bfseries}{\thesection.}{0.5em}{}[\kern0.em]
\titleformat{\subsection}{\normalfont\bfseries}{\thesubsection.}{0.3em}{}[\kern0.em]
\titleformat{\subsubsection}[runin]{\normalfont\bfseries}{\thesubsubsection.}{0.5em}{}[\kern0.5em]
\def\fofsubsubsection#1{\refstepcounter{equation}\subsubsection*{\theequation.\kern0.25em #1}}
\def\foisubsubsection#1{\refstepcounter{equation}\subsubsection*{\kern\parindent\theequation.\kern0.25em #1}}
\DeclareMathAlphabet{\smallchanc}{OT1}{pzc}%
                                 {m}{it}
\DeclareFontFamily{OT1}{pzc}{}
\DeclareFontShape{OT1}{pzc}{m}{it}%
             {<-> s * [1.100] pzcmi7t}{}
\DeclareMathAlphabet{\mathchanc}{OT1}{pzc}%
                                 {m}{it}
\newcommand{\mcH}{\mathchanc{H}}
\newcommand{\mcL}{\mathchanc{L}}
\newcommand{\mcR}{\mathchanc{R}}
\newcommand{\mcm}{\mathchanc{m}}
\newcommand{\mco}{\mathchanc{o}}
\DeclareFontFamily{OMS}{rsfs}{\skewchar\font'60}
\DeclareFontShape{OMS}{rsfs}{m}{n}{<-5>rsfs5 <5-7>rsfs7 <7->rsfs10 }{}
\DeclareSymbolFont{rsfs}{OMS}{rsfs}{m}{n}
\DeclareSymbolFontAlphabet{\scr}{rsfs}
\newcommand{\sF}{\scr{F}}
\newcommand{\sI}{\scr{I}}
\newcommand{\sK}{\scr{K}}
\newcommand{\sL}{\scr{L}}
\newcommand{\sM}{\scr{M}}
\newcommand{\sO}{\scr{O}}
\newcommand{\sfA}{{\sf A}}
\newcommand{\sfB}{{\sf B}}
\newcommand{\sfC}{{\sf C}}
\newcommand{\sfD}{{\sf D}}
\newcommand{\sfF}{{\sf F}}
\newcommand{\sfG}{{\sf G}}
\newcommand{\sfK}{{\sf K}}
\newcommand{\bA}{\mathbb{A}}
\newcommand{\bC}{\mathbb{C}}
\newcommand{\bH}{\mathbb{H}}
\newcommand{\bN}{\mathbb{N}}
\newcommand{\bP}{\mathbb{P}}
\newcommand{\bQ}{\mathbb{Q}}
\newcommand{\bZ}{\mathbb{Z}}
\newcommand{\frT}{\mathfrak{T}}
\newcommand{\ol}{\overline}
\newcommand{\into}{\hookrightarrow}
\newcommand{\wtilde}{\widetilde}
\newcommand{\wt}{\widetilde}
\newcommand{\what}{\widehat}
\newcommand{\leteq}{\colon\!\!\!=}
\newcommand{\col}{\colon}
\newcommand{\rpforward}[1]{\myR{#1}_*}
\DeclareMathOperator{\codim}{codim}
\DeclareMathOperator{\opdiv}{div}
\DeclareMathOperator{\exc}{Exc}
\newcommand{\sHom}[0]{{\mcH\mco\mcm}}    
\DeclareMathOperator{\id}{{id}}
\DeclareMathOperator{\im}{{im}}
\DeclareMathOperator{\Ob}{{Ob}}
\DeclareMathOperator{\Proj}{{Proj}}
\DeclareMathOperator{\Sing}{{Sing}}
\DeclareMathOperator{\Spec}{{Spec}}
\DeclareMathOperator{\supp}{{supp}}
\newcommand{\factor}[2]{\left. \raise 2pt\hbox{\ensuremath{#1}} \right/
        \hskip -2pt\raise -2pt\hbox{\ensuremath{#2}}}
\newcommand{\myR}{{\mcR\!}}
\newcommand{\myL}{{\mcL}}
\newcommand{\blank}{\underline{\hskip 10pt}}
\newcommand{\bdot}{\kdot}
\newcommand{\kdot}{{{\,\begin{picture}(1,1)(-1,-2)\circle*{2}\end{picture}\ }}}
\newcommand{\mydot}{\kdot}
\newcommand{\DuBois}[1]{{\underline \Omega {}^0_{#1}}}
\newcommand{\FullDuBois}[1]{{\underline \Omega {}^{\mydot}_{#1}}}
\newcommand{\Om}{\underline{\Omega}}
\def\coh#1.#2.#3.{H^{#1}(#2,#3)}
\newcommand{\union}\cup
\newcommand{\intersect}\cap
\newcommand{\Union}\bigcup
\newcommand{\Intersect}\bigcap
\def\myoplus#1.#2.{\underset #1 \to {\overset #2 \to \oplus}}
\newcommand{\resto}{\big\vert_}
\def\qis{\,{\simeq}_{\text{qis}}\,}
\begin{document}
\makeatletter
\newenvironment{refmr}{}{}
%
\newcommand\james{M\raise .5ex \hbox{\text{c}}Kernan}

\renewcommand\thesubsection{\thesection.\Alph{subsection}}
\renewcommand\subsection{
  \renewcommand{\sfdefault}{pag}
  \@startsection{subsection}%
  {2}{0pt}{-\baselineskip}{.2\baselineskip}{\raggedright
    \sffamily\itshape\small
  }}
\renewcommand\section{
  \renewcommand{\sfdefault}{phv}
  \@startsection{section} %
  {1}{0pt}{\baselineskip}{.2\baselineskip}{\centering
    \sffamily
    \scshape
}}
\newcounter{lastyear}\setcounter{lastyear}{\the\year}
\addtocounter{lastyear}{-1}
\newcommand\sideremark[1]{%
\normalmarginpar
\marginpar
[
\hskip .45in
\begin{minipage}{.75in}
\tiny #1
\end{minipage}
]
{
\hskip -.075in
\begin{minipage}{.75in}
\tiny #1
\end{minipage}
}}
\newcommand\rsideremark[1]{
\reversemarginpar
\marginpar
[
\hskip .45in
\begin{minipage}{.75in}
\tiny #1
\end{minipage}
]
{
\hskip -.075in
\begin{minipage}{.75in}
\tiny #1
\end{minipage}
}}
\newcommand\Index[1]{{#1}\index{#1}}
\newcommand\inddef[1]{\emph{#1}\index{#1}}
\newcommand\noin{\noindent}
\newcommand\hugeskip{\bigskip\bigskip\bigskip}
\newcommand\smc{\sc}
\newcommand\dsize{\displaystyle}
\newcommand\sh{\subheading}
\newcommand\nl{\newline}
\newcommand\input /home/kovacs/tex/latex/{\input /home/kovacs/tex/latex/} 
\newcommand\Get{\Input /home/kovacs/tex/latex/} 
\newcommand\toappear{\rm (to appear)}
\newcommand\mycite[1]{[#1]}
\newcommand\myref[1]{(\ref{#1})}
\newcommand\myli{\hfill\newline\smallskip\noindent{$\bullet$}\quad}
\newcommand\vol[1]{{\bf #1}\ } 
\newcommand\yr[1]{\rm (#1)\ } 
\newcommand\cf{cf.\ \cite}
\newcommand\mycf{cf.\ \mycite}
\newcommand\te{there exist}
\newcommand\st{such that}
\newcommand\myskip{3pt}
\newtheoremstyle{bozont}{3pt}{3pt}%
     {\itshape}
     {}
     {\bfseries}
     {.}
     {.5em}
     {\thmname{#1}\thmnumber{ #2}\thmnote{ \rm #3}}
\newtheoremstyle{bozont-sf}{3pt}{3pt}%
     {\itshape}
     {}
     {\sffamily}
     {.}
     {.5em}
     {\thmname{#1}\thmnumber{ #2}\thmnote{ \rm #3}}
\newtheoremstyle{bozont-sc}{3pt}{3pt}%
     {\itshape}
     {}
     {\scshape}
     {.}
     {.5em}
     {\thmname{#1}\thmnumber{ #2}\thmnote{ \rm #3}}
\newtheoremstyle{bozont-remark}{3pt}{3pt}%
     {}
     {}
     {\scshape}
     {.}
     {.5em}
     {\thmname{#1}\thmnumber{ #2}\thmnote{ \rm #3}}
\newtheoremstyle{bozont-def}{3pt}{3pt}%
     {}
     {}
     {\bfseries}
     {.}
     {.5em}
     {\thmname{#1}\thmnumber{ #2}\thmnote{ \rm #3}}
\newtheoremstyle{bozont-reverse}{3pt}{3pt}%
     {\itshape}
     {}
     {\bfseries}
     {.}
     {.5em}
     {\thmnumber{#2.}\thmname{ #1}\thmnote{ \rm #3}}
\newtheoremstyle{bozont-reverse-sc}{3pt}{3pt}%
     {\itshape}
     {}
     {\scshape}
     {.}
     {.5em}
     {\thmnumber{#2.}\thmname{ #1}\thmnote{ \rm #3}}
\newtheoremstyle{bozont-reverse-sf}{3pt}{3pt}%
     {\itshape}
     {}
     {\sffamily}
     {.}
     {.5em}
     {\thmnumber{#2.}\thmname{ #1}\thmnote{ \rm #3}}
\newtheoremstyle{bozont-remark-reverse}{3pt}{3pt}%
     {}
     {}
     {\sc}
     {.}
     {.5em}
     {\thmnumber{#2.}\thmname{ #1}\thmnote{ \rm #3}}
\newtheoremstyle{bozont-def-reverse}{3pt}{3pt}%
     {}
     {}
     {\bfseries}
     {.}
     {.5em}
     {\thmnumber{#2.}\thmname{ #1}\thmnote{ \rm #3}}
\newtheoremstyle{bozont-def-newnum-reverse}{3pt}{3pt}%
     {}
     {}
     {\bfseries}
     {}
     {.5em}
     {\thmnumber{#2.}\thmname{ #1}\thmnote{ \rm #3}}
\newtheoremstyle{bozont-def-newnum-reverse-plain}{3pt}{3pt}%
   {}
   {}
   {}
   {}
   {.5em}
   {\thmnumber{\!(#2)}\thmname{ #1}\thmnote{ \rm #3}}
\theoremstyle{bozont}    
\newtheorem{skthm}[equation]{Theorem}
\newtheorem{skcor}[equation]{Corollary} 
\newtheorem{sklem}[equation]{Lemma} 
\newtheorem{skconj}[equation]{Conjecture}
\theoremstyle{bozont-sc}
\newtheorem{proclaim-special}[equation]{\specialthmname}
\newenvironment{proclaimspecial}[1]
     {\def\specialthmname{#1}\begin{proclaim-special}}
     {\end{proclaim-special}}
\renewcommand{\labelenumi}{{\rm (\theskthm.\arabic{enumi})}}
\newcounter{skeq}
\newcounter{skeqno}
\setcounter{skeq}{1}
\setcounter{skeqno}{1}
\numberwithin{skeq}{equation}
\theoremstyle{bozont-remark}
\newtheorem{skrem}[equation]{Remark}
\newtheorem{sksubrem}[equation]{Remark}
\newtheorem{skobs}[equation]{Observation} 
\newtheorem{skexample}[equation]{Example} 
\newtheorem{skclaim}[equation]{Claim} 
\newtheorem{skFact}[equation]{Fact}
\newtheorem*{SubHeading*}{\SubHeadingName}%
\newtheorem{SubHeading}[equation]{\SubHeadingName}
\newtheorem{sSubHeading}[equation]{\sSubHeadingName}
\newenvironment{demo}[1] {\def\SubHeadingName{#1}\begin{SubHeading}}
  {\end{SubHeading}}%
\newenvironment{subdemo}[1]{\def\sSubHeadingName{#1}\begin{sSubHeading}}
  {\end{sSubHeading}} %
\newenvironment{demo-r}[1]{\def\SubHeadingName{#1}\begin{SubHeading-r}}
  {\end{SubHeading-r}}%
\newenvironment{subdemo-r}[1]{\def\sSubHeadingName{#1}\begin{sSubHeading-r}}
  {\end{sSubHeading-r}} %
\newenvironment{demo*}[1]{\def\SubHeadingName{#1}\begin{SubHeading*}}
  {\end{SubHeading*}}%
\newtheorem{skdefini}[equation]{Definition}
%
\theoremstyle{bozont-reverse}    
\newtheorem{SubHeadingr}[equation]{\SubHeadingName}
\newenvironment{demor}[1]{\def\SubHeadingName{#1}\begin{SubHeadingr}}{\end{SubHeadingr}}
\theoremstyle{bozont-def-newnum-reverse}    
\newtheorem{newnumr}[equation]{}
\theoremstyle{bozont-def-newnum-reverse-plain}    
\newtheorem{newnumrp}[equation]{}
\makeatother
  
\title{Singularities of stable varieties}

\author{S\'andor J Kov\'acs}

\date{\today}

\thanks{Supported in part by NSF Grant 
  DMS-0856185, and the Craig McKibben and Sarah Merner Endowed Professorship in
  Mathematics at the University of Washington.}

\address{University of Washington, Department of Mathematics, 354350, Seattle, WA
  98195-4350, USA} 

\email{skovacs@uw.edu\xspace}

\urladdr{http://www.math.washington.edu/$\sim$kovacs\xspace}

\maketitle
\newcommand{\szabores}{Szab\'o-resolution\xspace}


\section{Introduction}

The theory of moduli of curves has been extremely successful and part of this success
is due to the compactification of the moduli space of smooth projective curves by the
moduli space of stable curves. A similar construction is desirable in higher
dimensions but unfortunately the methods used for curves do not produce the same
results in higher dimensions. In fact, even the definition of what \emph{stable}
should mean is not entirely clear a priori. In order to construct modular
compactifications of moduli spaces of higher dimensional canonically polarized
varieties one must understand the possible degenerations that would produce this
desired compactification that itself is a moduli space of an enlarged class of
canonically polarized varieties. 

The main purpose of the present article is to discuss the relevant issues that arise
in higher dimensions and how these lead us to the definition of stable varieties and
stable families. Particular emphasis is placed on understanding the singularities of
stable varieties including some recent results.

The structure of the article is the following: In \S\ref{sec:stable-curves} and
\S\ref{sec:canonical-models} I review the relevant properties of stable curves and
their families, including the admissible singularities of the total spaces of stable
families. In \S\ref{sec:stable-singularities} show how generalizing the properties of
the total spaces of stable families leads to the right generalization of stable
singularities in higher dimensions. In \S\ref{ssec:canonical-sheaves} I review the
construction and main properties of canonical sheaves and divisors.
\S\ref{sec:sing-mmp} is devoted to the singularities of the minimal model program,
mainly from a moduli theoretic point of view. In \S\ref{sec:duality-vanishing} I
define some important basic notions and recall some fundamental theorems such as
Grothendieck duality and Kodaira vanishing.  \S\ref{sec:rtl} and
\S\ref{sec:db-singularities} are concerned with the definition and basic properties
of rational and Du~Bois singularities respectively.  In
\S\ref{sec:splitting-principle} I review the most important criteria for rational and
Du~Bois singularities organized around the principle that a natural morphism in the
derived category should admit a left inverse essentially only if it is a
quasi-isomorphism combined by a push-forward map admitting a section by a trace map.
In \S\ref{sec:stable-families} I review the applications of the results in
\S\ref{sec:splitting-principle} to stable families and in \S\ref{sec:deform-db-sing}
the state of knowlwedge about the deformation theory of stable singularities.

Without trying to be comprehensive, here is a list of relevant references on
background. In order to study higher dimensional varieties one should be familiar
with the main techniques of birational geometry. The standard reference for this is
\cite{KM98} and for some more recent results the reader may consult
\cite{Hacon-Kovacs10}. For moduli spaces of higher dimensional smooth varieties a
good reference is \cite{Viehweg95}. For moduli spaces of stable varieties one may
refer to \cite{Kollar85,KSB88,Kollar90}. A light introduction to the ideas involved
is contained in \cite{MR2483953}.

\begin{demo}  {Definitions and notation} %
  \label{def:main-defs}
  Let $k$ be an algebraically closed field of characteristic $0$.  Unless otherwise
  stated, all objects will be assumed to be defined over $k$. A \emph{scheme} will
  refer to a scheme of finite type over $k$ and unless stated otherwise, a
  \emph{point} refers to a closed point.
  
  For a morphism $f:Y\to S$ and another morphism $T\to S$, the symbol $Y_T$ will
  denote $Y\times_S T$. In particular, for $t\in S$ I will write $Y_t = f^{-1}(t)$.
  In addition, if $T=\Spec F$, then $Y_T$ will also be denoted by~$Y_F$.
  
  Let $X$ be a scheme and $\sF$ an $\sO_X$-module. The
  \emph{$m^\text{th}$ reflexive power} of $\sF$ is the double dual (or
  reflexive hull) of the $m^\text{th}$ tensor power of $\sF$:
  $$
  \sF^{[m]}\leteq (\sF^{\otimes m})^{**}.
  $$
  A \emph{line bundle} on $X$ is an invertible $\sO_X$-module. A \emph{$\bQ$-line
    bundle} $\sL$ on $X$ is a reflexive $\sO_X$-module of rank $1$ one of whose
  reflexive power is a line bundle, i.e., there exists an $m\in \bN_+$ such that
  $\sL^{[m]}$ is a line bundle.  The smallest such $m$ is called the \emph{index} of
  $\sL$.

  For the advanced reader: whenever I mention {Weil divisors}, assume that $X$ is
  $S_2$ and think of a \emph{Weil divisorial sheaf}, that is, a rank $1$ reflexive
  $\sO_X$-module which is locally free in codimension $1$. For flatness issues
  consult \cite[Theorem 2]{kollar-hulls-and-husks}.

  For the novice: whenever I mention Weil divisors, assume that $X$ is normal and
  adopt the definition \cite[p.130]{Hartshorne77}. For the adventurous novice: This
  is mainly interesting for canonical divisors. Read \S\ref{ssec:canonical-sheaves}.

  For a Weil divisor $D$ on $X$, its associated \emph{Weil divisorial sheaf} is the
  $\sO_X$-module $\sO_X(D)$ defined on the open set $U\subseteq X$ by the formula
  \begin{multline*}
    \Gamma (U, \sO_X(D))=\left\{ \frac ab \ \bigg|\ a,b\in \Gamma(U, \sO_X), b \text{
        is not a zero divisor} \right. \\ \left. \text{anywhere on $U$, and } 
      \phantom{\bigg|} D+\opdiv(a)-\opdiv(b)\geq 0 \right\}
  \end{multline*}
  and made into a sheaf by the natural restriction maps.

  A Weil divisor $D$ on $X$ is a \emph{Cartier divisor}, if its associated Weil
  divisorial sheaf, $\sO_X(D)$ is a line bundle. If the associated Weil divisorial
  sheaf, $\sO_X(D)$ is a $\bQ$-line bundle, then $D$ is a \emph{$\bQ$-Cartier
    divisor}. The latter is equivalent to the property that there exists an $m\in
  \bN_+$ such that $mD$ is a Cartier divisor.
  
  The symbol $\sim$ stands for \emph{linear} and $\equiv$ for
  \emph{numerical equivalence} of divisors.

  Let $\sL$ be a line bundle on a scheme $X$. It is said to be
  \emph{generated by global sections} if for every point $x\in X$
  there exists a global section $\sigma_x\in \coh 0.X.\sL.$ such that
  the germ $\sigma_x$ generates the stalk $\sL_x$ as an
  $\sO_X$-module. If $\sL$ is generated by global sections, then the
  global sections define a morphism 
  $$
  \phi_{\sL}\col X\to\bP^N= \bP\left(\coh 0.X.\sL.\right).
  $$
  $\sL$ is called \emph{semi-ample} if $\sL^{m}$ is generated by global sections for
  $m\gg 0$. $\sL$ is called \emph{ample} if it is semi-ample and $\phi_{\sL^m}$ is an
  embedding for $m\gg 0$. A line bundle $\sL$ on $X$ is called \emph{big} if the
  global sections of $\sL^{m}$ define a rational map $\phi_{\sL^m}\col X\dasharrow
  \bP^N$ such that $X$ is birational to $\phi_{\sL^m}(X)$ for $m\gg 0$. Note that in
  this case $\sL^m$ is not necessarily generated by global sections, so
  $\phi_{\sL^m}$ is not necessarily defined everywhere. I will leave it to the reader
  the make the obvious adaptation of these notions for the case of $\bQ$-line
  bundles.

  If it exists, then a \emph{canonical divisor} of a scheme $X$ is denoted by $K_X$
  and the \emph{canonical sheaf} of $X$ is denoted by $\omega_X$. See
  \S\ref{ssec:canonical-sheaves} for more.

  A smooth projective variety $X$ is of \emph{general type} if
  $\omega_X$ is big.  It is easy to see that this condition is
  invariant under birational equivalence between smooth projective
  varieties. An arbitrary projective variety is of \emph{general type}
  if so is a desingularization of it.

  A projective variety is \emph{canonically polarized} if $\omega_X$
  is ample. Notice that if a smooth projective variety is canonically
  polarized, then it is of general type.

  Further definitions will be given in later sections. In particular, for the
  definition of \emph{Cohen-Macaulay} and \emph{Gorenstein} see
  \S\ref{ssec:canonical-sheaves}.
\end{demo}

\section{Stable curves}\label{sec:stable-curves}

First I will recall the definition and main propeties of families of stable curves
and then subsequently investigate how these may be generalized to higher dimensions.

\begin{skdefini}\cite[2.12]{MR1631825}\label{def:stable-curves}
  A \emph{stable curve} is a connected projective curve that 
  \begin{enumerate}
  \item has only nodes as singularities; and,
  \item has only finitely many automorphisms.\label{item:1}
  \end{enumerate}
\end{skdefini}

\medskip\noindent The finiteness condition on the automorphism group is equivalent to
either one the following:

\begin{enumerate}
\item[(\ref{def:stable-curves}.\ref{item:1}a)] Every smooth rational component of
  the curve meets the other components in at least $3$ points.
\item[(\ref{def:stable-curves}.\ref{item:1}b)] The dualizing sheaf of the curve is
  ample.
\end{enumerate}

With respect to (\ref{def:stable-curves}.\ref{item:1}b) note that nodes are local
complete intersections and hence a stable curve is Gorenstein by definition. In
particular its dualizing sheaf exists and it is a line bundle, and hence it makes
sense to ask whether it is ample.

The fact that the moduli functor of stable curves gives a good compactification of
the moduli functor of smooth curves hinges on the stable reduction theorem:

\begin{skthm}
  \cite[3.47]{MR1631825},\cite{KKMS73} Let $B$ be a smooth curve, $0\in B$ a point,
  and $B^\circ=B\setminus\{0\}$. Let $X^\circ\to B^\circ$ be a flat family of stable
  curves of genus $\geq 2$. Then there exists a branched cover $B'\to B$ totally
  ramified over $0$ and a family $X'\to B'$ of stable curves extending the fiber
  product $X^\circ\times_{B^\circ}B'$. Moreover, any two such extensions are
  dominated by a third. In particular, their special fibers, that is, the preimage of
  $0$ in $B'$, are isomorphic.
\end{skthm}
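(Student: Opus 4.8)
The plan is to reduce everything to the semistable reduction theorem of \cite{KKMS73} and then contract the resulting semistable model down to the stable one, which I would realize as a relative canonical model; the uniqueness statement will then follow formally from the fact that a stable curve, having ample dualizing sheaf, is its own canonical model. First I would spread the family out over all of $B$: choose any flat projective extension $\lbar X\to B$ of $X^\circ\to B^\circ$, for instance the closure of $X^\circ$ inside some $\bP^N_B$ into which it embeds by a high power of the relative dualizing sheaf. Since the fibers over $B^\circ$ are stable, hence nodal, the only pathology sits over $0$, so I would next resolve the singularities of the total space by a morphism that is an isomorphism over $B^\circ$, arranging that $\lbar X$ is regular while $X^\circ$ is left untouched.

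Now apply semistable reduction \cite{KKMS73}: after a finite branched cover $B'\to B$ totally ramified over $0$, together with a further birational modification of the total space supported over $0$, there is a family $f\col\wt X\to B'$ with $\wt X$ regular and central fiber $\wt X_0$ a reduced curve with at worst nodal singularities. This is the technical heart of the argument and the step I expect to be the main obstacle: everything afterwards is essentially formal, whereas producing the semistable model requires the toroidal resolution machinery of \cite{KKMS73} (or, in a self-contained treatment for curves, an explicit analysis of the singularities of the total space combined with repeated base change and normalization to kill the multiplicities of the components of the special fiber).

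The curve $\wt X_0$ is semistable but in general not stable: it may carry smooth rational components meeting the rest of the fiber in only one or two points. To contract exactly these I would pass to the relative canonical model
$$
X' \leteq \Proj_{B'}\bigoplus_{m\ge 0} f_*\,\omega_{\wt X/B'}^{\otimes m}.
$$
The hypothesis of genus $\geq 2$ is precisely what guarantees that $\omega_{\wt X/B'}$ is relatively big and nef, that the relative canonical algebra is finitely generated, and that the induced morphism $\wt X\to X'$ contracts exactly the unstable components (those on which $\omega_{\wt X/B'}$ has degree zero), leaving a family whose special fiber is a connected nodal curve with ample dualizing sheaf, i.e.\ a stable curve by (\ref{def:stable-curves}.\ref{item:1}b). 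Nothing is contracted over $B'^\circ$, so $X'$ extends $X^\circ\times_{B^\circ}B'$, giving the desired existence.

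For the uniqueness clause, suppose $X_1'\to B'$ and $X_2'\to B'$ are two families of stable curves agreeing over $B'^\circ$. The resulting birational map $X_1'\ratmap X_2'$ over $B'$ is an isomorphism over $B'^\circ$, and resolving the closure of its graph yields a third family $Z\to B'$ equipped with proper birational morphisms $p_i\col Z\to X_i'$ that are isomorphisms over $B'^\circ$; this is the promised model dominating both. Because each $X_i'$ has ample relative dualizing sheaf, it is the relative canonical model of $Z$, so $X_i'\cong\Proj_{B'}\bigoplus_m (p_i)_*\omega_{Z/B'}^{\otimes m}$, and since the relative canonical algebra of $Z$ is intrinsic this forces $X_1'\cong X_2'$ over $B'$. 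Restricting the isomorphism to the fiber over $0$ identifies the two special fibers, completing the proof.
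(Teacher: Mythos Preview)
The paper does not actually prove this theorem: it simply cites \cite[3.47]{MR1631825} and \cite{KKMS73} and moves on, so there is no in-paper argument to compare against. Your sketch is the standard proof one finds in those references---spread out, resolve, apply semistable reduction, then contract to the relative canonical model---and the overall strategy is sound.

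Two points in the uniqueness paragraph deserve attention. First, the statement allows the two extensions to live over \emph{different} branched covers $B_1'\to B$ and $B_2'\to B$; you silently placed both over the same $B'$. This is easily repaired by first passing to a common branched cover dominating both, but it should be said. Second, and more substantively, the implication ``$\omega_{X_i'/B'}$ is relatively ample, hence $X_i'$ is the relative canonical model of $Z$'' is not automatic: one needs $(p_i)_*\omega_{Z/B'}^{\otimes m}\simeq \omega_{X_i'/B'}^{\otimes m}$, which by \eqref{lem:can-sings} requires the total space $X_i'$ to have canonical singularities. For a one-parameter family of nodal curves this is true---the total space is a surface with at worst $A_n$ (Du~Val) singularities, coming from the local form $xy=t^n$ of a smoothing of a node---and indeed the paper records exactly this fact in the paragraph following the theorem and in \eqref{obs:otal-space}. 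But you should insert that verification; ampleness of the relative dualizing sheaf alone does not force a variety to be its own canonical model.
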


Note that being a family of stable curves implies that $X'\to B'$ does not have any
multiple fibers. On the other hand, one cannot expect to have a smooth total space,
$X'$, for this family, although its singularities are the mildest possible: In
general $X'$ will have Du Val singularities (of type $A$). This follows from an
explicit computation of the versal deformation space of a node.  These singularities
may be resolved by successive blowing ups resulting in an exceptional divisor
consisting of a chain of rational curves, each appearing with multiplicity $1$ in the
fiber of the blown up surface over the point $0\in B$. This leads to semi-stable
reduction where one only requires the curves in the family to be semi-stable, that
is, instead of (\ref{def:stable-curves}.\ref{item:1}a) one only requires that every
smooth rational component of the curve meets the other components in at least $2$
points, but in exchange one obtains that one may require the total space of the
family be smooth.

In the next statement I collect the ideas from these observations that will be
important in our quest to understand stable varieties in higher dimensions.

\begin{skobs}\label{obs:otal-space}
  For a stable family of curves, $X\to B$, let $\wt X\to X$ be a resolution of
  singularities and $0\in B$ a point. Then
  \begin{enumerate}
  \item $\omega_{X/B}$ is relatively ample;\label{item:2}
  \item the special fiber $X_0$ is uniquely determined by the rest of the
    family;\label{item:3} 
  \item $X$ has Du Val singularities; \label{item:4} and
  \item $\wt X\to B$ has reduced fibers; \label{item:5}
  \end{enumerate}
\end{skobs}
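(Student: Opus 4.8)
The plan is to establish the four assertions separately; each distills a piece of the structure theory recalled above, and the only nonformal ingredient is the behaviour of a node in a one-parameter family. Assertion (\ref{item:2}) I would check fiber by fiber, which is permissible for relative ampleness along a proper flat morphism: every fiber $X_t$ is a stable curve, hence Gorenstein, so for this flat family with Gorenstein fibers the relative dualizing sheaf commutes with base change, $\omega_{X/B}\resto{X_t}\iso\omega_{X_t}$, and the latter is ample by (\ref{def:stable-curves}.\ref{item:1}b). Assertion (\ref{item:3}) is immediate from the uniqueness half of the stable reduction theorem: two stable families agreeing over $B\setminus\{0\}$ are, after a common base change, both dominated by a third, forcing their central fibers to be isomorphic, so $X_0$ is determined by the restriction of the family to the punctured base.

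The substance lies in (\ref{item:4}) and (\ref{item:5}), which I would treat analytically locally near a node $p\in X_0$. The node $\{xy=0\}$ is a hypersurface singularity of Tjurina number $1$, so its miniversal deformation has a one-dimensional base and the smooth total space $\{xy=s\}$; by versality the family $X\to B$ is induced near $p$ by a map $(B,0)\to(\bA^1,0)$, $t\mapsto g(t)$, whence $X\iso\{xy=g(t)\}$ locally. Flatness and the smoothness of the nearby fibers off $X_0$ force $g(0)=0$ and $g\not\equiv 0$; writing $g(t)=t^{k}\cdot(\text{unit})$ identifies $X$ with the $A_{k-1}$ singularity $\{xy=t^{k}\}$, and since $X$ is smooth away from the finitely many nodes of $X_0$ this gives (\ref{item:4}).

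For (\ref{item:5}), write $\pi\col\wt X\to B$ for the induced map and resolve each $A_{k-1}$ point into a chain $E_1,\dots,E_{k-1}$ of $(-2)$-curves; a short toric (or chart-by-chart) computation of $\pi^{*}(0)=\opdiv(t)$ shows that $t$ vanishes to order exactly $1$ along each $E_i$ and along each of the two strict transforms of the branches through $p$, so the central fiber of $\wt X\to B$ is reduced. The main obstacle is precisely this local package: one must first confirm that every singularity of $X$ is an isolated node of a fiber---so that generic smoothness yields $g\not\equiv 0$ and no non-isolated or higher Du~Val singularities intervene---and then push the multiplicity bookkeeping far enough to see the whole exceptional chain occurring with multiplicity $1$, rather than merely concluding that the fiber has normal crossings. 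By contrast (\ref{item:2}) and (\ref{item:3}) are essentially formal.
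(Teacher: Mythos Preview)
Your proposal is correct and follows essentially the same route as the paper. The paper does not give a separate formal proof of this Observation; rather, the paragraph immediately preceding it sketches exactly the argument you carry out: the versal deformation of a node yields the local model $xy=t^{k}$ (an $A_{k-1}$ singularity), and resolving it produces a chain of $(-2)$-curves each appearing with multiplicity~$1$ in the central fiber. Your treatment of (\ref{item:2}) via base change for $\omega_{X/B}$ and of (\ref{item:3}) via the uniqueness clause of stable reduction is likewise what the paper has in mind. One small caveat: your appeal to ``smoothness of the nearby fibers off $X_0$'' to force $g\not\equiv 0$ is an extra hypothesis not stated in the Observation but implicit in the surrounding stable-reduction context; if a node persists in the family the total space is locally $\{xy=0\}\times B$, which is not Du~Val, so strictly the statement needs the generic fiber smooth (or one should read ``Du~Val'' generously).
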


\section{Canonical models}\label{sec:canonical-models}

Next we will investigate how stability may be generalized to higher dimensions. For a
more detailed study and many other results see \cite{KSB88}.

First let us consider our goals. One wants to find a class of singularities that
allows us to define a moduli functor that would compactify the moduli functor of
smooth canonically polarized varieties. In other words, one wants to define
\emph{stable} varieties as canonically polarized varieties with singularities only
from this particular class and one would like that any family of smooth canonically
polarized varieties over a punctured curve have a unique stable limit, possibly over
a branched covering which is totally ramified over the punctured point.

Taking into account previous observations in the case of families of curves, this
means that one would like to achieve a notion of stable families such that
(\ref{obs:otal-space}.\ref{item:2}) and (\ref{obs:otal-space}.\ref{item:3}) remain
true. The first of these conditions is simply saying that stable varieties should be
canonically polarized. This is both reasonable and expected and if one is familiar
with the construction of moduli spaces via the Hilbert scheme (see for instance
\cite{Viehweg95}) then one can see that this is also necessary for other reasons as
well. The second condition, that is, uniqueness of specialization is important with
regard to the moduli space one hopes to construct eventually: this condition is
essentially saying that this moduli space would be separated, surely a condition one
would like to have.

The other two conditions in \eqref{obs:otal-space}, namely
(\ref{obs:otal-space}.\ref{item:4}) and (\ref{obs:otal-space}.\ref{item:5}) are
actually the ones that will help us figure out the right class of singularities
having the desired properties mentioned above.

It turns out that (\ref{obs:otal-space}.\ref{item:2}) and
(\ref{obs:otal-space}.\ref{item:4}) combined implies the uniqueness of
specialization, that is, once one has (\ref{obs:otal-space}.\ref{item:2}), then
(\ref{obs:otal-space}.\ref{item:4}) actually implies
(\ref{obs:otal-space}.\ref{item:3}). The last condition,
(\ref{obs:otal-space}.\ref{item:5}) will be useful in determining what class of
singularities would the fibers need to have in order for the total space to have the
kind of singularities that are the appropriate generalization of Du Val singularities
in the case of families of curves. We will investigate this further in
\S\ref{sec:stable-singularities}.

\emph{Du Val singularities}, also known as \emph{rational double points}, or
\emph{canonical Gorenstein surface singularities} may be defined a number of ways,
see \cite{MR543555} for fifteen of these. The original definition of them is actually
the one that generalizes well to higher dimensions.

In the following I will need to use the canonical sheaf on singular varieties. If $X$
is Cohen-Macaulay, then a dualizing sheaf exists and the canonical sheaf may be
defined as that. For the definition in more general settings please see
\S\ref{ssec:canonical-sheaves}.

\begin{skdefini}\label{def:can-sings}
  Let $X$ be a normal variety and assume that it admits a canonical sheaf $\omega_X$
  which is a line bundle. (This holds for example if $X$ is Gorenstein). 
  Then $X$ has \emph{canonical} singularities if for a resolution of singularities
  $\phi:\wt X\to X$ one has the folllowing:
  \begin{equation*}
    {\phi^*\omega_X\subseteq \omega_{\wt X}.}
  \end{equation*}
  If $\dim X=2$, these are also called \emph{Du Val} singularities.
\end{skdefini}

\begin{skrem}
  The assumption that $\omega_X$ is a line bundle is in fact not necessary to define
  canonical singularities, but it makes the definition simpler. We will later extend
  the definition to a larger class.
\end{skrem}

Notice that the (injective) morphism $\phi^*\omega_X\to \omega_{\wt X}$ does not
always exist. However, if a non-zero morphism like that exists, then it is
necessarily injective cf.\ \eqref{lem:injective}

Even though such a morphism does not always exist, it is easy to see that it does if
$X$ is smooth. Indeed, in that case there exists a natural morphism induced by the
pull-back of differential forms $\phi^*\Omega_X\to \Omega_{\wt X}$ and taking
determinants implies the existence of a non-zero morphism $\phi^*\omega_X\to
\omega_{\wt X}$.

\begin{sklem}\label{lem:injective}
  Let $Y$ be an irreducible variety, $\sL$ and $\sF$ torsion-free sheaves on $Y$, and
  $\alpha:\sL\to \sF$ a non-zero morphism. If $\sL$ has rank $1$, then $\alpha$ must
  be injective.
\end{sklem}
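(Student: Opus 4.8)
The plan is to show that the kernel of $\alpha$ is zero by exploiting that $\sL$ has generic rank $1$ together with torsion-freeness. Working at the generic point is the natural first move: let $\eta$ denote the generic point of $Y$ and $K = \sO_{Y,\eta}$ the function field. Since $\sL$ is torsion-free of rank $1$, its stalk $\sL_\eta$ is a $1$-dimensional $K$-vector space, and the localized map $\alpha_\eta : \sL_\eta \to \sF_\eta$ is $K$-linear.

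First I would argue that $\alpha_\eta$ is nonzero. This is the crux of the matter and is where the hypotheses must be combined carefully. Because $\sL$ is torsion-free, the natural restriction map $\sL \to \sL_\eta$ (i.e. $\sL \to \sL\otimes_{\sO_Y} K$) is injective, so no nonzero section of $\sL$ dies at the generic point. If $\alpha_\eta$ were zero, then the image of $\alpha$ would consist of sections of $\sF$ that vanish at $\eta$; but $\sF$ is torsion-free, so $\sF \to \sF_\eta$ is injective and the only section vanishing generically is $0$. This would force $\im\alpha = 0$, contradicting that $\alpha$ is a nonzero morphism. Hence $\alpha_\eta \neq 0$.

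Now, since $\sL_\eta$ is $1$-dimensional over the field $K$, any nonzero $K$-linear map out of it is automatically injective (a $1$-dimensional space has no nonzero proper subspaces, so the kernel, being proper, is $0$). Thus $\alpha_\eta$ is injective.

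Finally I would descend this back to $Y$. Let $\sK = \ker\alpha$, a subsheaf of $\sL$. Localizing at $\eta$ is exact, so $\sK_\eta = \ker(\alpha_\eta) = 0$. But $\sK$ is a subsheaf of the torsion-free sheaf $\sL$, hence itself torsion-free, so the map $\sK \to \sK_\eta$ is injective; combined with $\sK_\eta = 0$ this gives $\sK = 0$, i.e. $\alpha$ is injective. The main obstacle is really the middle step — establishing $\alpha_\eta \neq 0$ — since one must use torsion-freeness of \emph{both} sheaves (of $\sL$ to see that sections survive to $\eta$, and of $\sF$ to see that a generically-zero image is globally zero); the rank-$1$ hypothesis is then what converts nonvanishing into injectivity. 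Everything else is a routine application of the exactness of localization and the fact that subsheaves of torsion-free sheaves are torsion-free.
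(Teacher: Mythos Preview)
Your proof is correct and follows essentially the same route as the paper's: both arguments show that $\alpha$ is injective at the generic point (the paper phrases this via $\sI=\im\alpha$ being a nonzero subsheaf of the torsion-free $\sF$, hence of rank $1$; you phrase it directly as $\alpha_\eta\neq 0$), and then use that $\sK=\ker\alpha$ is a torsion subsheaf of the torsion-free $\sL$, hence zero. Your write-up is somewhat more explicit about where each hypothesis enters, but the underlying argument is the same.
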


\begin{proof}
  Let $\sK=\ker\alpha$ and $\sI=\im\alpha$. If $\alpha$ is non-zero, then $\sI$ is a
  non-zero subsheaf of the torsion-free $\sF$. Since $\sL$ is rank $1$ (at the
  general point of $Y$) it follows that so is $\sI$. Therefore $\alpha$ is
  generically injective which implies that $\sK$ is a torsion sheaf. However, $\sL$
  is also torsion-free and hence $\sK=0$.
\end{proof}

\begin{skcor}\label{cor:can-sings-def}
  Let $X$ be a normal variety and assume that it admits a canonical sheaf $\omega_X$
  which is a line bundle. If for a resolution of singularities $\phi:\wt X\to X$
  there exists a non-zero morphism $\phi^*\omega_X\to \omega_{\wt X}$, then $X$ has
  canonical singularities. In particular, if $X$ is smooth, then it has canonical
  singularities and in the definition of canonical singularities if the required
  condition holds for a single resolution of singularities, then it holds for all of
  them.
\end{skcor}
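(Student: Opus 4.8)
The plan is to deduce the corollary directly from Definition~\ref{def:can-sings} together with Lemma~\ref{lem:injective}. The corollary makes three assertions, and I would handle them in turn, the first being the crux.

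First I would prove the main implication: if there exists a non-zero morphism $\phi^*\omega_X\to\omega_{\wt X}$, then $X$ has canonical singularities. The key observation is that $\omega_X$ is assumed to be a line bundle, so $\phi^*\omega_X$ is a line bundle on $\wt X$, in particular a rank $1$ torsion-free sheaf; and $\omega_{\wt X}$, being the canonical sheaf of a smooth variety, is also a line bundle, hence torsion-free. Thus Lemma~\ref{lem:injective} applies verbatim (with $\sL=\phi^*\omega_X$, $\sF=\omega_{\wt X}$, and $Y=\wt X$ irreducible since $X$ is a variety and $\phi$ is birational): the given non-zero morphism is automatically injective. An injective morphism of the line bundle $\phi^*\omega_X$ into $\omega_{\wt X}$ realizes $\phi^*\omega_X$ as a subsheaf of $\omega_{\wt X}$, which is exactly the containment $\phi^*\omega_X\subseteq\omega_{\wt X}$ demanded by Definition~\ref{def:can-sings}. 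The one point requiring a word of care — and the main obstacle, such as it is — is that the definition asks for an honest inclusion of subsheaves of a fixed sheaf, whereas we are handed an abstract injection; I would note that an injective map of a line bundle into $\omega_{\wt X}$ identifies the source with its image, so the containment is understood via this identification, and this is the standard meaning of $\subseteq$ in this context.

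Next I would dispatch the smoothness claim. If $X$ is smooth we may take $\phi=\id$, and the natural morphism $\phi^*\omega_X=\omega_X\to\omega_{\wt X}=\omega_X$ is the identity, which is certainly non-zero; more conceptually, the paragraph preceding Lemma~\ref{lem:injective} already records that for any resolution of a smooth $X$ the pullback of differential forms $\phi^*\Omega_X\to\Omega_{\wt X}$ induces a non-zero map on determinants $\phi^*\omega_X\to\omega_{\wt X}$. Either way the hypothesis of the first part is met, so $X$ has canonical singularities.

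Finally, for the independence-of-resolution statement, I would argue that once a non-zero morphism exists for a single resolution, canonicity holds, and canonicity is by Definition~\ref{def:can-sings} a condition formulated for an arbitrary resolution; so I must check that the containment for one resolution forces it for every other. Here I would invoke the standard fact that any two resolutions of $X$ are dominated by a common third, reducing to the case of a further resolution $\psi:\wt X'\to\wt X$ with $\phi'=\phi\circ\psi$. Pulling back the inclusion $\phi^*\omega_X\subseteq\omega_{\wt X}$ along $\psi$ and composing with the canonical comparison map $\psi^*\omega_{\wt X}\to\omega_{\wt X'}$ (which exists and is non-zero precisely because $\wt X$ is smooth, by the differential-forms argument above) produces a non-zero morphism $\phi'^*\omega_X\to\omega_{\wt X'}$; applying the first part of the corollary to $\phi'$ then gives the desired containment on $\wt X'$, and comparing the two resolutions through $\wt X'$ yields the conclusion for all of them.
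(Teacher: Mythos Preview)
Your proposal is correct and is precisely the kind of argument the paper has in mind: the paper's own proof reads in its entirety ``Left to the reader,'' so your fleshing-out via Lemma~\ref{lem:injective} for the first claim, the pull-back of differential forms for the smooth case, and the common-resolution trick for independence is exactly what is intended. The only spot I would tighten is the last sentence of your third part: you explain carefully how to pass \emph{up} from $\wt X$ to a dominating $\wt X'$, but the phrase ``comparing the two resolutions through $\wt X'$'' hides the step of passing \emph{down} from $\wt X'$ to the other resolution $\wt X_2$; this is done by pushing forward the inclusion along $\psi_2\colon\wt X'\to\wt X_2$, using the projection formula $\psi_{2*}\psi_2^*\phi_2^*\omega_X\simeq\phi_2^*\omega_X$ (as $\wt X_2$ is smooth, hence normal) together with the natural injection $\psi_{2*}\omega_{\wt X'}\hookrightarrow\omega_{\wt X_2}$.
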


\begin{proof}
  Left to the reader.
\end{proof}

This leads us to another interesting condition that characterizes canonical
singularities of Gorenstein varieties.

\begin{sklem}\label{lem:can-sings}
  Let $X$ be a normal variety and assume that it admits a canonical sheaf $\omega_X$
  which is a line bundle and $\phi:\wt X\to X$ a resolution of singularities. Then
  the following are equivalent:
  \begin{enumerate}
  \item $X$ has canonical singularities;
  \item $\phi_*\omega_{\wt X}\simeq \omega_{X}$; and
  \item $\phi_*\omega^{\otimes m}_{\wt X}\simeq \omega^{\otimes m}_{X}$ for all
    $m\geq 0$.\label{lem:can-sings-3}
  \end{enumerate}
\end{sklem}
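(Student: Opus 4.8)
The plan is to collapse all three conditions onto a single divisor–theoretic statement and then read them off from one projection–formula computation. Since $\omega_X$ is a line bundle, $K_X$ is Cartier, so $\phi^*K_X$ is defined and I may write
\[
K_{\wt X}=\phi^*K_X+\sum_i a_iE_i,
\]
where the $E_i$ are the $\phi$-exceptional prime divisors and the discrepancies $a_i$ are integers; equivalently $\omega_{\wt X}\simeq\phi^*\omega_X\otimes\sO_{\wt X}(\sum_i a_iE_i)$. The divisorial reformulation of the inclusion $\phi^*\omega_X\subseteq\omega_{\wt X}$ in Definition \ref{def:can-sings} shows that condition~(1) is exactly the statement that $a_i\geq0$ for all $i$. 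So the lemma becomes: $a_i\geq0\ \forall i$ $\iff$ $\phi_*\omega_{\wt X}\simeq\omega_X$ $\iff$ $\phi_*\omega_{\wt X}^{\otimes m}\simeq\omega_X^{\otimes m}$ for all $m\geq0$.

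The computation I would isolate is the following. For every $m\geq0$, tensoring the displayed formula and using that $\omega_X^{\otimes m}$ is a line bundle, the projection formula gives
\[
\phi_*\omega_{\wt X}^{\otimes m}\simeq\omega_X^{\otimes m}\otimes\phi_*\sO_{\wt X}\Big(\sum_i m\,a_iE_i\Big),
\]
so everything is governed by the subsheaves $\phi_*\sO_{\wt X}(\sum_i c_iE_i)$ for exceptional $\sum_i c_iE_i$. The elementary fact needed is: for any exceptional divisor $D=\sum_i c_iE_i$ one has $\phi_*\sO_{\wt X}(D)\subseteq\sO_X$, with equality if and only if $D\geq0$. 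For the inclusion, a rational function $f$ with $\opdiv(f)+D\geq0$ is regular away from the image of the exceptional locus, which has codimension $\geq2$ because $X$ is normal (hence regular in codimension one); as $X$ is $S_2$, such an $f$ extends to a regular function, so $f\in\sO_X$. If $D\geq0$ then $\sO_{\wt X}\subseteq\sO_{\wt X}(D)$, and pushing forward (using $\phi_*\sO_{\wt X}=\sO_X$) gives the reverse inclusion; conversely $1\in\phi_*\sO_{\wt X}(D)$ forces $D\geq0$.

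With these in hand the easy implications are immediate. For (1)$\Rightarrow$(3): if all $a_i\geq0$ then $m\,a_i\geq0$, so $\phi_*\sO_{\wt X}(\sum_i m\,a_iE_i)=\sO_X$ and the displayed isomorphism reads $\phi_*\omega_{\wt X}^{\otimes m}\simeq\omega_X^{\otimes m}$ for every $m\geq0$ (for $m=0$ this is just $\phi_*\sO_{\wt X}=\sO_X$). The implication (3)$\Rightarrow$(2) is the case $m=1$.

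The delicate step, and the one I expect to be the main obstacle, is (2)$\Rightarrow$(1), because (2) is only an abstract isomorphism of sheaves rather than an equality of subsheaves of $\omega_X$. Here I would argue as follows. By the $m=1$ case of the displayed formula, $\cJ:=\phi_*\sO_{\wt X}(\sum_i a_iE_i)$ satisfies $\omega_X\otimes\cJ\simeq\omega_X$, and tensoring with the line bundle $\omega_X^{-1}$ gives $\cJ\simeq\sO_X$; in particular $\cJ$ is invertible. On the other hand $\cJ\subseteq\sO_X$ by the previous paragraph, and $\cJ$ coincides with $\sO_X$ away from the image of the exceptional locus, a set of codimension $\geq2$. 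An invertible subsheaf of $\sO_X$ that agrees with $\sO_X$ in codimension one must be all of $\sO_X$: its cokernel would be the structure sheaf of an effective Cartier divisor, of pure codimension one, yet it is supported in codimension $\geq2$, so it vanishes. (Equivalently, the natural inclusion $\phi_*\omega_{\wt X}\hookrightarrow\omega_X$, which is injective by Lemma \ref{lem:injective} as a nonzero map out of a rank-one torsion-free sheaf, must then be an isomorphism.) Hence $\cJ=\sO_X$, so $\sum_i a_iE_i\geq0$, i.e. $a_i\geq0$ for all $i$, which is (1). This closes the cycle (1)$\Rightarrow$(3)$\Rightarrow$(2)$\Rightarrow$(1).
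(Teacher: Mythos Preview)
Your proof is correct, and it takes a genuinely different route from the paper's.

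The paper argues $(2)\Rightarrow(1)$ by adjunction: the abstract isomorphism $\phi_*\omega_{\wt X}\simeq\omega_X$ yields a nonzero map $\omega_X\to\phi_*\omega_{\wt X}$, hence by the $(\phi^*,\phi_*)$ adjunction a nonzero map $\phi^*\omega_X\to\omega_{\wt X}$, which is automatically injective by Lemma~\ref{lem:injective} and so gives~(1) directly. For $(1)\Rightarrow(3)$ the paper writes down the short exact sequence
\[
0\to\phi^*\omega_X^{\otimes m}\to\omega_{\wt X}^{\otimes m}\to\sO_{mE}(mE)\to 0
\]
and appeals to the vanishing $\phi_*\sO_{mE}(mE)=0$, citing \cite[1-3-2]{KMM87} for the general case.

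Your argument instead funnels everything through the single identity $\phi_*\omega_{\wt X}^{\otimes m}\simeq\omega_X^{\otimes m}\otimes\phi_*\sO_{\wt X}(\sum_i m a_iE_i)$ and the elementary lemma that $\phi_*\sO_{\wt X}(D)\subseteq\sO_X$ with equality iff $D\geq 0$ for exceptional $D$. This makes $(1)\Rightarrow(3)$ a one-liner with no external reference, which is a genuine gain over the paper's short-exact-sequence-plus-citation. Your $(2)\Rightarrow(1)$ is a bit longer than the paper's adjunction trick, but the idea---that $\cJ$ is forced to be an invertible ideal cosupported in codimension~$\geq 2$, hence all of $\sO_X$---is clean and self-contained. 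Overall your treatment is more uniform; the paper's is a touch quicker on $(2)\Rightarrow(1)$ but leans on an outside reference for $(1)\Rightarrow(3)$.
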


\begin{proof}
  First assume that $\phi_*\omega_{\wt X}\simeq \omega_{X}$. Notice that there always
  exists a natural morphism $\phi_*\omega_{\wt X}\to \omega_{X}$, which is injective
  by \eqref{lem:injective}, so this condition could be phrased by saying that ``the
  natural morphism $\phi_*\omega_{\wt X}\to \omega_{X}$ is surjective''. In fact, the
  point of the condition is that this isomorphism implies that there exists a
  non-zero morphism $\omega_{X}\to \phi_*\omega_{\wt X}$ and via adjointness of
  $\phi^*$ and $\phi_*$ that implies the existence of a non-zero morphism
  $\phi^*\omega_X\to \omega_{\wt X}$, which in turn implies that $X$ has canonical
  singularities.

  Now assume that $X$ has canonical singularities, that is, there exists an injective
  morphism $\phi^*\omega_X\to \omega_{\wt X}$. It follows that the line bundle
  $\omega_{\wt X}\otimes \phi^*\omega_X^{-1}$ corresponds to an effective Cartier
  divisor $E$ on $\wt X$, so one obtains the expression:
  $$ \omega_{\wt X}\simeq \phi^*\omega_X\otimes \sO_{\wt X}(E).$$
  Since $X$ is normal it also follows that $E$ is $\phi$-exceptional and hence 
  $$ \omega_{\wt X}\resto E\simeq \sO_{E}(E).$$
  Therefore for any $m\geq 0$ one has the following short exact sequence:
  \begin{equation*}
    \xymatrix{%
      0\ar[r] & \phi^*\omega_X^{\otimes m} \ar[r] & \omega^{\otimes m}_{\wt X}
      \ar[r] & \sO_{mE}(mE) \ar[r] & 0 }.  
  \end{equation*}
  In order to finish the proof one needs to prove that $\phi_*\sO_{mE}(mE)=0$. This
  is easy to prove for surfaces, since the fact that $E$ is exceptional implies that
  its self-intersection is negative, hence the sheaf $\sO_{mE}(mE)$ has no global
  sections.  The statement in arbitrary dimension follows by a simple induction on
  the dimension considering general hyperplane sections. For details see
  \cite[1-3-2]{KMM87}.
\end{proof}

Combining canonical singularities with canonical polarization leads to the notion of
\emph{canonical models}: 

\begin{skthm}
  Let $X$ be a variety with canonical singularities and $\phi:\wt X\to X$ a
  resolution of singularities.  Assume that $\omega_X$ is ample. Then $X$ is
  isomorphic to the \emph{canonical model} of $\wt X$. In particular one has that
  $$
  X\simeq \Proj \bigoplus_{m\geq 0}H^0(\wt X, \omega_{\wt X}^{\otimes m}).
  $$
\end{skthm}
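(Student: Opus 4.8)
The plan is to reduce everything to Lemma \ref{lem:can-sings}, part \eqref{lem:can-sings-3}, which already carries out the essential geometry, and then to invoke the standard description of a projective variety as the $\Proj$ of the section ring of an ample line bundle. Observe first that, since $X$ has canonical singularities and $\omega_X$ is a line bundle, Lemma \ref{lem:can-sings} applies; in particular part \eqref{lem:can-sings-3} supplies the isomorphisms $\phi_*\omega_{\wt X}^{\otimes m}\simeq \omega_X^{\otimes m}$ for every $m\geq 0$.

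Next I would take global sections. Using the identity $\Gamma(X,\phi_*\sG)=\Gamma(\wt X,\sG)$ for any sheaf $\sG$ on $\wt X$ (which holds by the definition of the direct image, since $\phi^{-1}(X)=\wt X$), these isomorphisms yield
$$H^0(\wt X, \omega_{\wt X}^{\otimes m})\simeq H^0\bigl(X,\phi_*\omega_{\wt X}^{\otimes m}\bigr)\simeq H^0(X,\omega_X^{\otimes m})$$
compatibly with the multiplication maps, because the isomorphisms of Lemma \ref{lem:can-sings} are the natural ones. Consequently the two graded rings $\bigoplus_{m\geq 0}H^0(\wt X,\omega_{\wt X}^{\otimes m})$ and $\bigoplus_{m\geq 0}H^0(X,\omega_X^{\otimes m})$ are isomorphic as graded $k$-algebras, so it suffices to prove that $X\simeq \Proj\bigoplus_{m\geq 0}H^0(X,\omega_X^{\otimes m})$.

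The final step uses the hypothesis that $\omega_X$ is ample. Ampleness already forces $X$ to be projective (in the sense of the definitions in \eqref{def:main-defs}, some power of $\omega_X$ embeds $X$ into $\bP^N$). For a projective variety $X$ with an ample line bundle $\sL$, the section ring $\bigoplus_{m\geq 0}H^0(X,\sL^{\otimes m})$ is a finitely generated graded $k$-algebra and the natural morphism $X\to\Proj\bigoplus_{m\geq 0}H^0(X,\sL^{\otimes m})$ is an isomorphism; applying this with $\sL=\omega_X$ and combining with the previous paragraph gives the asserted isomorphism, and shows that $X$ coincides with the canonical model of $\wt X$.

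The only point requiring genuine care is this last identification $X\simeq\Proj\bigoplus_m H^0(X,\omega_X^{\otimes m})$: it rests on the finite generation of the section ring of an ample bundle together with the fact that $\Proj$ of that ring recovers $X$ (equivalently, that some Veronese $\omega_X^{\otimes m_0}$ is very ample and that, after passing to a further power, the resulting graded ring is generated in degree one so that its $\Proj$ is the image of the associated projective embedding). Both are classical consequences of Serre's theorems on ample line bundles, so I would simply cite them rather than reprove them; everything preceding is formal once Lemma \ref{lem:can-sings} is available.
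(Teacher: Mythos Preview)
Your proof is correct and follows essentially the same approach as the paper: use ampleness of $\omega_X$ to identify $X$ with the $\Proj$ of its own canonical ring, and then invoke Lemma~\ref{lem:can-sings}\eqref{lem:can-sings-3} to identify the pluricanonical sections of $X$ with those of $\wt X$. The paper's version is more terse, simply asserting both steps, while you have spelled out the passage through $\phi_*$ and the compatibility with multiplication; but the logical structure is identical.
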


\begin{proof}
  Since $\omega_X$ is ample, it follows easily that 
  $$
  X\simeq \Proj \bigoplus_{m\geq 0}H^0(X, \omega_{X}^{\otimes m}),
  $$
  and $H^0(X, \omega_{X}^{\otimes m})\simeq H^0(\wt X, \omega_{\wt X}^{\otimes m})$
  for any $m\geq 0$ by (\ref{lem:can-sings}.\ref{lem:can-sings-3}).
\end{proof}

The same proof provides a relative version of this statement:

\begin{skthm}\label{thm:can-models}
  Let $f:X\to B$ be a proper flat morphism and $\phi:\wt X\to X$ a resolution of
  singularities. Let $\wt f=f\circ\phi$ and assume that $X$ has canonical
  singularities, $B$ is a smooth curve and $\omega_{X/B}$ is relatively ample with
  respect to $f$.  Then one has a natural $B$-isomorphism
  $$ 
  X/B\simeq \big(\Proj_B \bigoplus_{m\geq 0} \wt f_*\omega^{\otimes m}_{\wt
    X/B}\big)/B.\  
  $$
\end{skthm}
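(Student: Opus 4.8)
The plan is to run the proof of the absolute statement fibrewise over $B$, i.e.\ to reduce both sides to a single sheaf of graded $\sO_B$-algebras. First I would exploit that $\omega_{X/B}$ is relatively ample with respect to the proper morphism $f$: this is exactly the hypothesis needed for the relative analogue of the fact that an ample polarization reconstructs a projective scheme from its section ring. Concretely, $\bigoplus_{m\geq 0} f_*\omega_{X/B}^{\otimes m}$ is a finitely generated sheaf of $\sO_B$-algebras and the canonical map yields a natural $B$-isomorphism
$$
X/B\simeq \big(\Proj_B \bigoplus_{m\geq 0} f_*\omega_{X/B}^{\otimes m}\big)/B.
$$
Here one also uses that $\omega_{X/B}$ is an honest line bundle, which holds because $X$ has canonical singularities, so $\omega_X$ is invertible and hence so is $\omega_{X/B}=\omega_X\otimes f^*\omega_B^{-1}$. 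With this in hand the whole theorem reduces to producing an isomorphism of sheaves of graded $\sO_B$-algebras $\bigoplus_m \wt f_*\omega_{\wt X/B}^{\otimes m}\simeq \bigoplus_m f_*\omega_{X/B}^{\otimes m}$.

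The heart of the argument is the term-by-term identification. Since $B$ is a smooth curve, $\omega_B$ is a line bundle and $\omega_{\wt X/B}=\omega_{\wt X}\otimes \wt f^*\omega_B^{-1}$, so by the projection formula
$$
\wt f_*\omega_{\wt X/B}^{\otimes m}\simeq \big(\wt f_*\omega_{\wt X}^{\otimes m}\big)\otimes\omega_B^{-m}.
$$
Now $\wt f=f\circ\phi$ gives $\wt f_*=f_*\circ\phi_*$, and since $X$ has canonical singularities, \eqref{lem:can-sings}, specifically (\ref{lem:can-sings}.\ref{lem:can-sings-3}), yields $\phi_*\omega_{\wt X}^{\otimes m}\simeq \omega_X^{\otimes m}$ for every $m\geq 0$. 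Hence $\wt f_*\omega_{\wt X}^{\otimes m}\simeq f_*\omega_X^{\otimes m}$, and applying the projection formula once more downstairs, with $\omega_{X/B}=\omega_X\otimes f^*\omega_B^{-1}$, gives $\wt f_*\omega_{\wt X/B}^{\otimes m}\simeq f_*\omega_{X/B}^{\otimes m}$ for all $m$.

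Finally I would check that these isomorphisms assemble into an isomorphism of graded \emph{algebras}, not merely of graded sheaves: the multiplication maps on $\bigoplus_m \omega_{\wt X}^{\otimes m}$ push forward compatibly with the trace isomorphisms $\phi_*\omega_{\wt X}^{\otimes m}\simeq \omega_X^{\otimes m}$, and twisting everywhere by the powers of $\omega_B^{-1}$ respects the grading, so the two section algebras are isomorphic over $\sO_B$. Taking $\Proj_B$ and combining with the isomorphism of the first paragraph then completes the proof. I expect the one genuinely non-formal input to be the relative reconstruction statement in the first step — that relative ampleness of $\omega_{X/B}$ makes the canonical map from $X$ to the relative $\Proj$ of its section algebra an isomorphism — whereas the remaining steps are just the projection formula together with the already-established \eqref{lem:can-sings}; the compatibility of the trace maps with the algebra structure is the only bookkeeping that needs care.
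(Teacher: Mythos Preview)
Your proof is correct and follows essentially the same route as the paper: first use relative ampleness of $\omega_{X/B}$ to identify $X/B$ with $\Proj_B$ of its own relative section algebra, then use the projection formula together with (\ref{lem:can-sings}.\ref{lem:can-sings-3}) to identify $f_*\omega_{X/B}^{\otimes m}$ with $\wt f_*\omega_{\wt X/B}^{\otimes m}$ for all $m\geq 0$. The paper's proof is terser and does not spell out the algebra-structure compatibility you mention in your last paragraph, but the substance is identical.
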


\begin{proof}
  Since $\omega_{X/B}$ is relatively ample, it follows that
  $$
  X/B\simeq \big(\Proj_B \bigoplus_{m\geq 0} f_*\omega^{\otimes m}_{X/B}\big)/B,
  $$
  and $f_*\omega^{\otimes m}_{X/B}\simeq f_*\omega^{\otimes m}_{X}\otimes
  \omega_B^{-m} \simeq \wt f_*\omega^{\otimes m}_{\wt X}\otimes \omega_B^{-m} \simeq
  \wt f_*\omega^{\otimes m}_{\wt X/B}$ for any $m\geq 0$ by
  (\ref{lem:can-sings}.\ref{lem:can-sings-3}).
\end{proof}

\begin{skcor}
  Let $B$ be a smooth curve, $0\in B$ a point, and $B^\circ=B\setminus\{0\}$. Let $f:
  X\to B$ and $f': X'\to B$ be two proper flat morphisms such that restricting $f$
  and $f'$ over $B^\circ$ gives isomorphic families, i.e., $(X\times_B
  B^\circ)/B^\circ\simeq (X'\times_B B^\circ)/B^\circ$ as $B^\circ$-schemes. If both
  $X$ and $X'$ have canonical singularities and both $\omega_{X/B}$ and
  $\omega_{X'/B}$ are relatively ample, then $X/B\simeq X'/B$ as $B$-schemes. In
  particular, the special fibers of $f$ and $f'$ are isomorphic: $X_0\simeq X'_0$.
\end{skcor}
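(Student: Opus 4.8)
The plan is to exhibit both $X$ and $X'$ as the relative canonical model of \emph{one and the same} smooth variety $W$, so that the two descriptions supplied by \eqref{thm:can-models} are built from literally the same graded $\sO_B$-algebra and therefore produce the same $\Proj_B$. Concretely, if $W$ is a smooth variety admitting proper birational morphisms $p\col W\to X$ and $q\col W\to X'$ with a common structure morphism $g=f\circ p=f'\circ q\col W\to B$, then applying \eqref{thm:can-models} to $X$ through the resolution $p$ and to $X'$ through the resolution $q$ gives $B$-isomorphisms
\begin{equation*}
  X/B \simeq \big(\Proj_B \textstyle\bigoplus_{m\geq 0} g_*\omega^{\otimes m}_{W/B}\big)/B \simeq X'/B.
\end{equation*}
Restricting the resulting $B$-isomorphism $X/B\simeq X'/B$ over the point $0\in B$ then yields $X_0\simeq X'_0$.

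To build such a $W$, I would use the hypothesis directly. The given isomorphism $(X\times_B B^\circ)/B^\circ\simeq (X'\times_B B^\circ)/B^\circ$ is a birational map $X\dashrightarrow X'$ over $B$ which is an isomorphism on the dense open set lying over $B^\circ$ (dense since $f$ is flat over the curve $B$). Let $\Gamma$ be the closure of its graph inside the fibre product $X\times_B X'$; since $X$ and $X'$ are proper over $B$ so is $\Gamma$, and the two projections restrict to proper morphisms $\Gamma\to X$ and $\Gamma\to X'$ that are isomorphisms over $B^\circ$, hence birational. Choosing a resolution of singularities $W\to\Gamma$ (available as $\charact k=0$) and composing, the maps $p\col W\to X$ and $q\col W\to X'$ are proper birational morphisms from a smooth variety, i.e.\ resolutions of singularities of $X$ and of $X'$ respectively. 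Because $\Gamma$ sits inside the fibre product over $B$, the identity $f\circ p=f'\circ q$ holds automatically, so $g$ is well defined and $\omega_{W/B}$ is unambiguous whether computed through $X$ or through $X'$.

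Granting this, the corollary follows at once: $X$ and $X'$ have canonical singularities and relatively ample relative dualizing sheaves by hypothesis, so \eqref{thm:can-models} applies to each, and the two resulting algebras are the \emph{single} sheaf $\bigoplus_{m\geq 0}g_*\omega^{\otimes m}_{W/B}$, forcing the displayed chain of isomorphisms. I expect the only real work to be the construction of $W$: one must verify that passing to the graph closure $\Gamma$ and then resolving yields a smooth variety dominating \emph{both} $X$ and $X'$ while retaining a single map to $B$. This is precisely where the assumption that the two families agree over $B^\circ$ enters, and it is what makes the two canonical algebras \emph{coincide} rather than merely agree over $B^\circ$; the fact that \eqref{thm:can-models} is valid for an arbitrary resolution (ultimately a consequence of \eqref{lem:can-sings}) is what lets us feed the same $W$ into it for both families.
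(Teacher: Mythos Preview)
Your proof is correct and follows essentially the same approach as the paper: construct a common resolution $W$ of $X$ and $X'$ over $B$ and apply \eqref{thm:can-models} to both. You actually give more detail than the paper, which simply asserts the existence of a common resolution $\wt X$ without spelling out the graph-closure construction.
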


\begin{proof}
  Let $\wt X$ be a common resolution of singularities of $X$ and $X'$ with resolution
  morphisms be $\phi:\wt X\to X$ and $\phi':\wt X\to X'$. It follows that then
  $f\circ \phi =f'\circ\phi'$ so one may denote this morphism by $\wt f$ and so
  $$ 
  X/B\simeq \big(\Proj_B \bigoplus_{m\geq 0} \wt f_*\omega^{\otimes m}_{\wt
    X/B}\big)/B \simeq X'/B
  $$ by \eqref{thm:can-models}.
\end{proof}

The important conclusion to draw from this is that in order to guarantee uniqueness
of specialization one should require that a stable family has a relatively ample
canonical sheaf and its total space has canonical singularities.

\begin{skobs}\label{obs:total-space}
  For a stable family $X\to B$ over a smooth curve $B$ let $\wt X\to X$ be a
  resolution of singularities and $0\in B$ a point. Then one expects the following
  conditions to hold:
  \begin{enumerate}
  \item $\omega_{X/B}$ is relatively ample;\label{item:36}
  \item $X$ has canonical singularities; and \label{item:37}
  \item $\wt X\to B$ has reduced fibers;\label{item:38}
  \end{enumerate}
\end{skobs}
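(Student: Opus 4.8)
The plan is to read \eqref{obs:total-space} as the higher-dimensional counterpart of the curve statement \eqref{obs:otal-space}, and to justify its three clauses from the canonical-models package assembled above together with a single semistable-reduction input, in place of the explicit versal-deformation computation that settled the case of a node. First I would dispose of clauses (\ref{obs:total-space}.\ref{item:36}) and (\ref{obs:total-space}.\ref{item:37}), since the preceding results bear on them directly, and treat (\ref{obs:total-space}.\ref{item:38}) last, as it requires a genuinely new ingredient.

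Clause (\ref{obs:total-space}.\ref{item:36}) demands little beyond unwinding the design requirement: a stable family is to be a family of canonically polarized varieties, so relative ampleness of $\omega_{X/B}$ is precisely what that phrase means in families. The substantive observation is that clauses (\ref{obs:total-space}.\ref{item:36}) and (\ref{obs:total-space}.\ref{item:37}) are exactly the two hypotheses of the uniqueness corollary proved just above: once $\omega_{X/B}$ is relatively ample and $X$ has canonical singularities, \eqref{thm:can-models} identifies $X$ with the relative canonical model $\Proj_B\bigoplus_{m\geq 0}\wt f_*\omega_{\wt X/B}^{\otimes m}$, an object that depends only on the restriction of the family over $B^\circ$. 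Hence any two such extensions of a given punctured family must coincide, which is the uniqueness of specialization that separatedness of the eventual moduli space demands. I would therefore present these two clauses not as consequences of a prior definition of stability but as the conditions the canonical-model theory singles out as forcing separatedness, with canonical singularities occupying the role that Du Val singularities held in \eqref{obs:otal-space}.

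For clause (\ref{obs:total-space}.\ref{item:38}) I would appeal to semistable reduction applied to $\wt f=f\circ\phi\colon\wt X\to B$: after a further finite base change totally ramified over $0$ and additional blowing up, one may arrange that the fibers of a resolution are reduced. This is the higher-dimensional replacement for the curve picture, in which blowing up a Du Val point of the total space yields a chain of rational curves, each of multiplicity one in the central fiber. I relegate this clause to the end precisely because, unlike the first two, it is not intrinsic to $X$: it holds only after modifying both $\wt X$ and the base $B$.

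The main obstacle is clause (\ref{obs:total-space}.\ref{item:37}), or rather the assertion implicit in the whole statement that \emph{canonical} is the correct class of singularities. In the curve case one simply computes the versal deformation of a node and reads off that the stable limit acquires only $A_n$-singularities; in higher dimensions no such elementary local computation is at hand, and pinning down canonical singularities as the right generalization of Du Val singularities of the total space is exactly the question taken up in \S\ref{sec:stable-singularities}. For this reason I would keep the observation's phrasing that these conditions are merely \emph{expected}: what the arguments above genuinely establish is the implication that relative ampleness together with canonical singularities forces uniqueness of the limit; the converse existence statement, that a stable limit enjoying these properties actually exists, is the deep higher-dimensional analogue of the Deligne--Mumford stable reduction theorem and lies well beyond the formal results collected here.
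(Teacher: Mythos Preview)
Your reading is correct and matches the paper's own treatment: this observation is not a theorem with a proof but a summary of the desiderata motivated by the preceding canonical-models discussion, and you have identified exactly the same justification the paper gives---namely that clauses (\ref{obs:total-space}.\ref{item:36}) and (\ref{obs:total-space}.\ref{item:37}) are precisely the hypotheses of \eqref{thm:can-models} and its corollary, hence together force uniqueness of specialization, while (\ref{obs:total-space}.\ref{item:38}) is carried over from the curve picture. The only minor difference is that the paper does not pause to justify (\ref{obs:total-space}.\ref{item:38}) via semistable reduction at this point; it simply inherits the condition from \eqref{obs:otal-space} and defers its use to \S\ref{sec:stable-singularities}, whereas you supply an explicit (and correct) reason for expecting it.
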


Notice that I dropped the condition that ``the special fiber $X_0$ is uniquely
determined by the rest of the family'' from \eqref{obs:otal-space} not because we no
longer need it but because (\ref{obs:total-space}.\ref{item:36}) and
(\ref{obs:total-space}.\ref{item:37}) imply it.

In the next section we will investigate what the third condition
(\ref{obs:total-space}.\ref{item:38}) gives us with regard to the singularities of
the fibers.

\section{Stable singularities}\label{sec:stable-singularities}

Let $f:X\to B$ be a flat morphism over a smooth curve $B$, $\phi:\wt X\to X$ a
resolution of singularities and $0\in B$ a point.  Assume that $X$ has canonical
singularities and $\wt f:\wt X\to B$ has reduced fibers.

One would like to understand the condition this places on the singularities of $X_0$,
the special fiber of $f$. To this end let us assume that $\phi$ is an embedded
resolution of $X_0\subset X$ and such that $\phi^{*}X_0=\wt X_0\subset \wt X$ is an
snc divisor.  Notice that by assumption $B$ is a smooth curve, so $X_0$ is a Cartier
divisor and hence pulling it back makes sense. Furthermore, assume that $\wt f$ has
reduced fibers so $\phi^{*}X_0=\wt X_0$ itself is an snc divisor not just that its
support is one.

  We saw in the proof of \eqref{lem:can-sings} that $X$ having canonical
  singularities implies, and in fact is equivalent to, that
  \begin{equation} 
    \label{eq:3}
    \omega_{\wt X}\simeq \phi^*\omega_X(E)
  \end{equation}
  for some effective $\phi$-exceptional divisor $E\subset \wt X$. 

  Since $\phi$ is an embedded resolution of $X_0\subset X$, $\wt X_0$ contains a
  union of components $\what X_0$ that gives a resolution of singularities
  $\what\phi_0=\phi\resto{\what X_0}:\what X_0\to X_0$.  One cannot, however, expect
  $\wt X_0$ be equal to $\what X_0$, so one obtains that
  \begin{equation} 
    \label{eq:4}
    \phi^*X_0=\wt X_0 = \what X_0 + F,
  \end{equation}
  where $F$ is the effective $\phi$-exceptional divisor formed by the unions of the
  components of $\wt X_0$ not contained in $\what X_0$. Since $\wt X$ is smooth, all
  of these are Cartier divisors.

  By adjunction one has that $\omega_{\what X_0}\simeq \omega_{\wt X}(\what
  X_0)\resto{\what X_0}$ and $\omega_{X_0}\simeq \omega_{X}(X_0)\resto{X_0}$.
  Combining this with (\ref{eq:3}) and (\ref{eq:4}) leads to the isomorphism
  \begin{multline*}
    \omega_{\what X_0}\simeq \omega_{\wt X}(\what X_0)\resto{\what X_0} \simeq
    \phi^*\omega_X(E+\phi^*X_0-F)\resto{\what X_0} \simeq \\
    \simeq {\what\phi_0}^*\left(\omega_X(X_0)\resto{X_0} \right)\otimes \sO_{\what
      X_0}\left((E-F)\resto{\what X_0}\right) \simeq {\what\phi_0}^*\omega_{X_0}\otimes
    \sO_{\what X_0}\left((E-F)\resto{\what X_0}\right)
  \end{multline*}
  Now let $\what E_0=E\resto{\what X_0}$ and $\what F_0=F\resto{\what X_0}$. Then one
  obtains that
  \begin{equation} 
    \label{eq:6}
    \what\phi_0^*\omega_{X_0} \subseteq \omega_{\what X_0}(\what F_0)
  \end{equation}
  This is not quite the definition of canonical singularities, but a somewhat weaker
  condition. Notice however that while we did not know much about the multiplicities
  of the components of $E$ other than that they are non-negative, we do know that
  $\wt X_0=\what X_0 +F$ is an snc divisor and hence so is $\what F_0=F\cap \what X_0
  \subset \what X_0$. This is an important detail. This means that although
  $\what\phi_0^*\omega_{X_0}$ does not necessarily admit a non-zero morphism to
  $\omega_{\what X_0}$, it does admit an embedding to a slightly larger sheaf.  This
  leads to the definition of \emph{(semi) log canonical singularities} see
  \S\ref{sec:sing-mmp} for more details.

  Observe that the above computation works backwards as well, so we actually found
  what we were looking for: a condition on the singularities of the fibers instead of
  a condition on the singularities of the total space.

\section{The dualizing sheaf versus the canonical divisor}\label{ssec:canonical-sheaves}

In order to construct moduli spaces one needs a polarization of our objects. The
(essentially only) natural choice of a line bundle on an abstract smooth projective
variety is the canonical bundle. This is the main reason we are studying
\emph{canonically} polarized varieties. When one extends our moduli problem in order
to have compact moduli spaces one still needs a canonical polarization. However, the
dualizing sheaf, even if it exists, is not necessarily a line bundle. Therefore, a
discussion of how one produces canonical polarizations on  stable varieties is in
order. Below we will use many of the notions and notation from \eqref{def:main-defs}
but we also need a few more.

\begin{skdefini}\label{def:CM}
  A finitely generated non-zero module $M$ over a noetherian local ring $R$ is called
  \emph{Cohen-Macaulay} if its depth over $R$ is equal to its dimension. For the
  definition of depth and dimension I refer the reader to \cite{MR1251956}. The ring
  $R$ is called \emph{Cohen-Macaulay} if it is a Cohen-Macaulay module over itself.

  Let $X$ be a scheme and $x\in X$ a point. One says that $X$ has
  \emph{Cohen-Macaulay} singularities at $x$ (or simply $X$ is \emph{CM} at $x$), if
  the local ring $\sO_{X,x}$ is Cohen-Macaulay.

  If in addition, $X$ admits a dualizing sheaf $\omega_X$ which is a line bundle in
  a neighbourhood of $x$, then $X$ is \emph{Gorenstein} at $x$.

  The scheme $X$ is \emph{Cohen-Macaulay} (resp.\ \emph{Gorenstein}) if it is
  \emph{Cohen-Macaulay} (resp.\ \emph{Gorenstein}) at $x$ for all $x\in X$.
\end{skdefini}

If $X$ is Cohen-Macaulay, then it admits a dualizing sheaf. However, stable varieties
are not necessarily Cohen-Macaulay, so one needs a more sophisticated approach.

Stable varieties are projective and projective varieties admit \emph{dualizing
  complexes}: If $X\subseteq \bP^N$ and $d=\dim X$, then
$$
\omega_X^\mydot \qis \myR\sHom_{\bP^N}(\sO_X,\omega_{\bP^N}[N]).
$$
Using this dualizing complex one can always define the \emph{canonical sheaf}:
$$
\omega_X\leteq h^{-d}(\omega_X^\mydot)
$$
In fact, this allows us to define the canonical sheaf of any quasi-projective
variety, or more generally any locally closed subset of a variety that admits a
dualizing complex. For more on this the reader is referred to
\cite{MR0222093,Conrad00}. 

Suppose $U\subseteq X$ is an open subset of the projective variety $X$. Then let
$$
\omega_U^\mydot\leteq {\omega_X^\mydot}\resto U.
$$

\begin{skrem}
  \label{rem:CM-and-Gor}
  Note that $X$ is Cohen-Macaulay if and only if $$\omega_X^\mydot\qis \omega_X[d],$$
  that is, if the only non-zero cohomology sheaf of $\omega_X^\mydot$ is the
  $-d^{\text{th}}$ (and $d$ still denotes $\dim X$). In this case the canonical sheaf
  is isomorphic to the \emph{dualizing sheaf}.

  $X$ is Gorenstein if and only if it is Cohen-Macaulay and $\omega_X$ is a line
  bundle. 
\end{skrem}

For a normal variety $X$ the usual way to define the canonical sheaf is different but
produces the same sheaf.  Being normal is equivalent to being $R_1$ and $S_2$, that
is, $X$ is normal if and only if it is non-singular in codimension $1$ and satisfies
Serre's $S_2$ condition.

Let $U=X\setminus\Sing X$ be the locus where $X$ is non-singular and $\iota:U\into X$
its natural embedding to $X$. Then one may define the canonical=dualizing sheaf of
$U$ as the determinant of the cotangent bundle, i.e., the sheaf of top differential
forms, $\omega_U=\det\Omega_U$. Then the usual definition of the canonical sheaf of
$X$ is $\omega'_X\leteq\iota_*\omega_U$.  It is relatively easy to see that both
$\omega_X$ and $\omega'_X$ are reflexive and agree in codimension $1$, so they are
actually isomorphic (cf.\ \cite[\S1]{MR597077}).
$$
\xymatrix{%
  \omega_X\ar[r]^{\simeq} &  \omega'_X \\
  h^{-d}(\omega_X^\mydot) \ar@{=}[u]
  & \iota_*\omega_U. \ar@{=}[u]
}
$$
Indeed, since $\iota:U\into X$ is an open embedding, the restriction of the dualizing
complex of $X$ to $U$ is the dualizing complex of $U$:
$$
{\omega_X^\mydot}\resto U\qis \omega_U^\mydot.
$$
In particular, since restriction to $U$ is an exact functor, ome also has
$$
{\omega_X}\resto U\simeq \omega_U.
$$
Recall that $X$ is assumed to be normal. In that case the $R_1$ condition implies
that $\codim_X(X\setminus U)\geq 2$ and the $S_2$ condition combined with the fact
that $\omega_X$ is reflexive implies that then
\begin{equation}
  \label{eq:7}
  \omega_X\simeq \iota_*\left({\omega_X}\resto U\right)\simeq \iota_*\omega_U.
\end{equation}
Possibly some readers are more familiar with this isomorphism in the divisor setting.

Let $X$ be an irreducible normal variety and $\iota:U\into X$ the non-simgular locus
as above.  A \emph{canonical divisor} $K_X$ of $X$ is a Weil divisor whose associated
\emph{Weil divisorial sheaf},
$$
\sO_X(K_X)\leteq \left\{ f\in K(X) \vert K_X + \opdiv(f) \geq 0 \right\},
$$ 
is isomorphic to the canonical sheaf $\omega_X$. This is usually defined the
following way: Define $\omega_U$ as above. As $U$ is non-singular, $\omega_U$ is a
line bundle and hence corresponds to a Cartier divisor. Let $K_U=\sum \lambda_iK_i$
denote a Weil divisor associated to this Cartier divisor.  Let $\ol K_i$ denote the
closure of $K_i$ in $X$ and let
$$
K_X\leteq \sum \lambda_i \ol K_i.
$$ 
Since $\codim_X(X\setminus U)\geq 2$, this is the unique Weil divisor on $X$ for
which ${K_X}\resto U= K_U$. By the same argument as in the paragraph preceding
\eqref{eq:7} it follows that 
$$
\omega_X\simeq \sO_X(K_X).
$$

As already clear from the case of curves, when working with objects on the boundary
of the moduli space one is forced to work with non-normal schemes. We will need one
more important detail to make this work. Notice that $U$ being non-singular is not
essential in the above constructions. Since one knows how to define the canonical
sheaf of a quasi-projective variety, one does not need $U$ to be non-singular for
that. The only place where we used the non-singularity of $U$ was to establish that
$\omega_U$ is a line bundle.  In other words, we may replace the condition of $U$
being non-singular with assuming that its canonical sheaf is a line bundle. In
particular, assuming that $U$ is Gorenstein will do the trick and then we still have
that
\begin{equation}
  \label{eq:canonical}
  \omega_X\simeq \iota_*\left({\omega_X}\resto U\right)\simeq \iota_*\omega_U.
\end{equation}

The precise condition we need in order to be able to define stabile varieties is the
following.

\begin{skdefini}\label{def:g1}
  A variety is called $G_1$ if it is Gorenstein in codimension $1$.
\end{skdefini}

If $X$ is $G_1$ and $S_2$ then everything said about the canonical sheaf of normal
varieties above works the same way. In particular, one may talk about a
\emph{canonical divisor} $K_X$ which is a Weil divisor that is Cartier in codimension
$1$. In fact, if $X$ is $G_1$ and $S_2$, then one does not need to assume that $X$
admits a dualizing complex and one does not need to define the canonical sheaf that way:

\begin{skdefini}
  \label{def:can-sheaf}
  Let $X$ be a scheme that is $G_1$ and $S_2$ and $\iota:U\into X$ be an open set
  such that $\codim_X(X\setminus U)\geq 2$ and $U$ is Gorenstein. Then 
  $$
  \omega_X\leteq \iota_*\omega_U
  $$
  is called the \emph{canonical sheaf} of $X$. 
\end{skdefini}

\begin{sklem}
  If $X$ admits a dualizing complex, using the above definition for $\omega_X$, one
  still has that $$\omega_X\simeq h^{-d}(\omega_X^\mydot),$$ where $d=\dim X$. In
  particular, the two definitions of the dualizing sheaf agree.
\end{sklem}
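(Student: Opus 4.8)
The plan is to show that the two definitions of the canonical sheaf agree by reducing everything to the open set $U$ where $X$ is Gorenstein, and then using the $S_2$ property to extend the identification across the codimension $\geq 2$ complement. The key observation is that both constructions restrict to the same thing on $U$, so the content of the lemma is that both sheaves are determined by their restriction to $U$.

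First I would compare the restrictions of both candidate sheaves to the open set $U$. On $U$ the variety is Gorenstein, so by \eqref{rem:CM-and-Gor} the dualizing complex satisfies ${\omega_X^\mydot}\resto U\qis \omega_U[d]$, and since restriction to an open set is exact we get
$$
\big(h^{-d}(\omega_X^\mydot)\big)\resto U \simeq h^{-d}\big({\omega_X^\mydot}\resto U\big)\simeq \omega_U.
$$
On the other hand, the definition in \eqref{def:can-sheaf} gives $\omega_X=\iota_*\omega_U$, whose restriction to $U$ is of course $\omega_U$. So both sheaves agree on $U$.

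Next I would invoke the $S_2$ property to promote this agreement on $U$ to a global isomorphism. The point is that $h^{-d}(\omega_X^\mydot)$ is $S_2$ (this is a standard property of the lowest nonvanishing cohomology sheaf of the dualizing complex on an $S_2$ scheme), and $\iota_*\omega_U$ is by construction the pushforward of a sheaf from the complement of a codimension $\geq 2$ set, hence also $S_2$. Any $S_2$ sheaf equals the pushforward of its own restriction to the complement of a codimension $\geq 2$ closed subset, so applying $\iota_*$ to the isomorphism on $U$ yields
$$
h^{-d}(\omega_X^\mydot)\simeq \iota_*\Big(h^{-d}(\omega_X^\mydot)\resto U\Big)\simeq \iota_*\omega_U = \omega_X,
$$
exactly as in the chain \eqref{eq:7}, \eqref{eq:canonical}. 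One should check that the isomorphism on $U$ is canonical enough to be compatible with restriction maps so that pushing forward is legitimate, but this is routine.

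The main obstacle is the claim that $h^{-d}(\omega_X^\mydot)$ is $S_2$ when $X$ is $S_2$. This is where the hypotheses really enter: one needs a local-duality argument relating the depth of $\sO_X$ to the codimensions of support of the higher cohomology sheaves of $\omega_X^\mydot$, showing that $S_2$-ness of $\sO_X$ forces the lowest cohomology sheaf of the dualizing complex to satisfy $S_2$ as well. Once that structural fact is in hand, the rest is the formal extension-across-codimension-two argument already used in the normal case. I would cite the standard references on dualizing complexes (\cite{MR0222093,Conrad00}) for this depth-duality input rather than reprove it.
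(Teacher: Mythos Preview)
Your proof is correct and follows essentially the same approach as the paper, which simply cites \eqref{eq:canonical}; that equation is precisely the chain of isomorphisms you spell out, and the implicit input there is exactly the reflexivity/$S_2$-ness of $h^{-d}(\omega_X^\mydot)$ that you take care to justify. If anything, your version is more complete than the paper's one-line proof, since the paper asserts that $\omega_X$ is reflexive in the normal case and then reuses the argument without further comment in the $G_1$, $S_2$ setting.
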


\begin{proof}
  This follows from \eqref{eq:canonical}.
\end{proof}

\begin{skrem}
  We are now in a perfect position to take a deep breath, make a few observations,
  and lose any inhibition we might have against working with non-normal varieties.
  Being normal is the same as being $R_1$ and $S_2$ and we are replacing that with
  being $G_1$ and $S_2$. In other words, we are not going wild with all kinds of
  weird schemes. As far as our canonical divisors are concerned we are not much worse
  off than working with normal varieties. The main thing to keep in mind is that our
  varieties may be singular along a divisor. This means that for example one has to
  be careful when working with Weil divisors. However, the extent of this is
  essentially that by the $G_1$ assumption $\omega_X$ is a line bundle near the
  general points of the $1$-codimensional part of the singular locus of $X$ and hence
  we may choose canonical divisors whose support does not contain any components of
  that $1$-codimensional singular locus. This implies that $X$ is non-singular at the
  general points of these canonical divisors, so we may work with them as we are used
  to work with Weil divisors.  In addition, we will put even more restrictions on our
  singularities. In particular, our stable varieties will only have double normal
  crosssings in codimension $1$. These are arguably the simplest non-normal
  singularities and they are also Gorenstein.
\end{skrem}

As indicated at the beginning of this section, in order to construct our moduli
spaces one needs a canonical polarization on our stable varieties. The obvious
assumption would be to require that stable varieties are Gorenstein. This works in
dimension $1$, but not in higher dimension. Consider a cone over a quartic rational
scroll in $\bP^5$. Then a general pencil of hyperplanes defines a family of smooth
varieties degenerating to one that is not Gorenstein; a cone over a quartic rational
curve in $\bP^4$. For a more detailed explanation of this example see \ref{app:A}.
Taking a branched cover over a general high degree hypersurface section of the cone
one obtains a family of smooth canonically polarized varieties degenarating to one
with the same kind of singularities as above. This example shows that if one sticks
to Gorenstein singularities, or even just to those for which $\omega_X$ is a line
bundle, one will not get a compact moduli space.

So, if $\omega_X$ is not a line bundle, how does one get a ``canonical
polarization''?  The point is that even though one cannot assume that $\omega_X$ is a
line bundle,  may assume that some power of it is. Of course, since $\omega_X$ is
not a line bundle, one has to be careful what ``power'' means. Tensor powers of
non-locally free sheaves tend to get even worse. For instance, tensor powers of
torsion-free, or even reflexive sheaves may have torsion or co-torsion.  Also, we
want the power to be still asssociated to a Weil divisor. In other words, we want it
to be a reflexive sheaf, i.e., we need to take {reflexive powers}:

\begin{skdefini} 
  Let $X$ be a scheme that admits a canonical sheaf $\omega_X$. (For instance it
  admits a dualizing complex or it is $G_1$ and $S_2$).  Then one defines the
  \emph{pluricanonical sheaves} of $X$ as the reflexive powers of the canonical sheaf
  of $X$:
   $$
   \omega_X^{[m]}\leteq \left(\omega_X^{\otimes m}\right)^{**}.
   $$
\end{skdefini}

\begin{sklem}\label{lem:can=dual}
  Let $X$ be a scheme that is $G_1$ and $S_2$.  Then for any $m\in \bZ$,
  $$
  \omega_X^{[m]}\simeq \sO_X(mK_X).
  $$
\end{sklem}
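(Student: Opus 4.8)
The plan is to compare the two sheaves $\omega_X^{[m]}$ and $\sO_X(mK_X)$ on the open locus where $X$ is Gorenstein, where both visibly coincide with the same line bundle, and then to propagate this agreement across all of $X$ using that a reflexive (equivalently $S_2$) sheaf on an $S_2$ scheme is recovered as the pushforward of its restriction to the complement of any closed subset of codimension at least two.

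First I would fix $\iota:U\into X$ to be the inclusion of the Gorenstein locus of $X$. The $G_1$ hypothesis guarantees that $U$ contains every point of codimension $\leq 1$, so $\codim_X(X\setminus U)\geq 2$, and $U$ is exactly the kind of open set appearing in \eqref{def:can-sheaf}. On $U$ the canonical sheaf $\omega_U=\omega_X\resto U$ is a line bundle, hence $\omega_X^{\otimes m}\resto U=\omega_U^{\otimes m}$ is already locally free and in particular reflexive; since forming the double dual commutes with restriction to an open set, this gives $\omega_X^{[m]}\resto U\simeq \omega_U^{\otimes m}$. On the other hand, because $\omega_U$ is invertible the restriction $K_X\resto U$ is a Cartier divisor with $\sO_U(K_X\resto U)\simeq \omega_U$, and since the divisorial sheaf of \eqref{def:main-defs} restricts to the divisorial sheaf of the restricted divisor, $\sO_X(mK_X)\resto U\simeq \sO_U(mK_X\resto U)\simeq \omega_U^{\otimes m}$ as well. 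Thus I obtain a canonical isomorphism $\omega_X^{[m]}\resto U\simeq \sO_X(mK_X)\resto U$.

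It remains to extend this isomorphism over $X$, and this is the only real point. Both $\omega_X^{[m]}$ (a double dual, hence reflexive) and $\sO_X(mK_X)$ (reflexive by its defining formula in \eqref{def:main-defs}) are reflexive, and a reflexive coherent sheaf on the $S_2$ scheme $X$ itself satisfies Serre's condition $S_2$ as a module, because it is a $\sHom$ into the $S_2$ structure sheaf $\sO_X$. This is precisely the condition that the natural map $\sF\to \iota_*\left(\sF\resto U\right)$ is an isomorphism for every open $U$ with $\codim_X(X\setminus U)\geq 2$, since $S_2$-ness is equivalent to the vanishing of the zeroth and first local cohomology along any such closed subset. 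Applying this to each side and composing with the isomorphism on $U$ yields
$$
\omega_X^{[m]}\simeq \iota_*\left(\omega_X^{[m]}\resto U\right)\simeq \iota_*\left(\sO_X(mK_X)\resto U\right)\simeq \sO_X(mK_X),
$$
as desired. For $m<0$ one reads $\omega_X^{[m]}$ as the reflexive dual of $\omega_X^{[-m]}$; on $U$ it is still the line bundle $\omega_U^{\otimes m}$, so the identical argument applies. The main obstacle is thus not any single computation but correctly invoking the reflexivity/$S_2$ dictionary that allows one to ignore the codimension-$\geq 2$ locus — the hypotheses $G_1$ and $S_2$ are tailored exactly so that this works.
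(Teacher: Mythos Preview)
Your proof is correct and follows essentially the same route as the paper: restrict to the Gorenstein locus $U$, observe that both sheaves agree there with $\omega_U^{\otimes m}$, and then use reflexivity on an $S_2$ scheme to extend. The paper simply cites \cite[1.11]{MR1291023} for the last step where you spell out the $S_2$/pushforward argument.
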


\begin{proof}
  Let $\iota:U\into X$ be an open dense subset of $X$ such that $\codim_X(X\setminus
  U)\geq 2$ and ${\omega_X}\resto U\simeq \omega_U$ is a line bundle. It follows that
  ${\omega_X^{\otimes m}}\resto U\simeq
  \omega^{\otimes m}_U$ is a line bundle, and hence
  $$
  {\omega_X^{[m]}}\resto U\simeq 
  \sO_X(mK_X)\resto U.
  $$
  Since both $\omega_X^{[m]}$ and $\sO_X(mK_X)$ are reflexive, this means that they
  are isomorphic cf.\ \cite[1.11]{MR1291023}. 
\end{proof}

This means that if $X$ is $G_1$ and $S_2$, then one may work with pluricanonical
divisors the same way as if $X$ was normal.

\begin{skrem}
  Talking about Weil divisors on non-normal schemes is tricky, because in order to
  define the multiplicity of a function along a prime divisor and hence define the
  notion of linear equivalence of Weil divisors, one needs the local rings of general
  points of these prime divisors to be DVRs. Therefore, one only considers prime
  divisors that are not contained in the singular locus of the ambient scheme. The
  condition $G_1$ ensures that the canonical sheaf may be represented by a Weil
  divisor that satisfies this requirement.
\end{skrem}

We are now ready to introduce the notion that allows us to have canonical
polarizations even if $\omega_X$ is not a line bundle.

\begin{skdefini}\label{def:Q-Cartier}
  Let $X$ be a scheme that admits a canonical sheaf $\omega_X$. Then, as in
  \eqref{def:main-defs}, $\omega_X$ is called a \emph{$\bQ$-line bundle} if some
  pluricanonical sheaf $\omega_X^{[m]}$ is a line bundle.
\end{skdefini}

\noindent
As a direct consequence of \eqref{lem:can=dual} one obtains:

\begin{sklem}\label{lem:dual-Q-Cartier}
  Let $X$ be a scheme that is $G_1$ and $S_2$. Then $K_X$ is $\bQ$-Cartier if and
  only $\omega_X$ is a $\bQ$-line bundle. 
\end{sklem}

\section{Singularities of the minimal model program}\label{sec:sing-mmp}

It is time to take a more detailed look at the singularities we have encountered and
give precise definitions.  For an excellent introduction to this topic the reader is
urged to take a thorough look at Miles Reid's {\it Young person's guide to canonical
  singularities} \cite{Reid87}. For the precise theory the standard reference is
\cite{KM98} and for recent results one may consult \cite{Hacon-Kovacs10}.

\subsection{Log canonical singularities}

As we have already seen in the case of stable curves, in order to construct compact
moduli spaces one must deal with non-normal singularities as that is the nature of
degenerations: normalization does not work in families.  However, as a warm-up, let
us first define the normal and more traditional singularities that are relevant in
the minimal model program. This will help understanding the somewhat more technical
definitions required to deal with the non-normal case.

\begin{skdefini}\label{def:lc-sing}
  Let $X$ be a normal variety such that $K_X$ is $\bQ$-Cartier and $\phi:\wtilde X\to
  X$ a resolution of singularities with a normal crossing exceptional divisor
  $E=\union E_i$.  One would like to compare the canonical divisors of $\wt X$ and
  $X$.  Since $\phi$ is an isomorphism on an open set this means that the relative
  canonical divisor, that is, the difference between $K_{\wt X}$ and the pull-back of
  $K_X$ is a divisor supported entirely on the exceptional locus. However, as $K_X$
  is not necesssarily Cartier one may not be able to pull it back. One may pull back
  a multiple of it, so one compares that to the same multiple of $K_{\wt X}$. Then
  one divides the difference by the appropriate power. Notice that this way one may
  actually define the pull-back of $K_X$ as a $\bQ$-divisor:
  $$
  \phi^*K_X\leteq \frac 1m \phi^*(mK_X),
  $$ 
  where $m$ is such that $mK_X$ is Cartier. Then one may indeed compare the canoncial
  divisors of $\wt X$ and $X$:
  $$
  K_{\wtilde X} \sim_{\bQ} \phi^*K_X + \sum a_i E_i.
  $$
  where $a_i\in \bQ$.  
Then
  \begin{center}
    $X$ has \quad $\begin{matrix}
      \text{\emph{terminal}}\\
      \text{\emph{canonical}}\\
      \text{\emph{log terminal}}\\ 
      \text{\emph{log canonical}}
    \end{matrix}$ \quad singularities
    if\quad $\begin{matrix} a_i>0. \\ a_i \geq 0. \\
      a_i> -1.\\ a_i\geq -1.\end{matrix}$
  \end{center}
  for all $i$ and any resolution $\phi$ as above.
\end{skdefini}

\begin{skrem}
  We saw in \S\ref{sec:stable-singularities} that the ``right'' class of
  singularities for the total space of a stable family is that of \emph{canonical
    singularities} and that this leads to the fibers having \emph{log canonical
    singularities}. Here we extended the definition of canonical singularities from
  varieties whose canonical sheaf is a line bundle to those whose canonical sheaf is
  a $\bQ$-line bundle. We will generalize these definitions to include the non-normal
  relatives of these singularities in \S\S\ref{sec:semi-log-canonical} which will be
  the right class for ``stable singularities''.  

  Next we will see further evidence supporting this claim.
\end{skrem}

\begin{skexample}\label{ex:cone}
  This is an auxiliary example that I will use later. 

  \noindent
  Let $\Xi=(x^d+y^d+z^d+tw^d=0)\subseteq \bP^3_{x:y:z:w}\times
  \bA^1_t$. The special fiber $\Xi_0$ is a cone over a smooth plane
  curve of degree $d$ and the general fiber $\Xi_t$, for $t\neq 0$, is
  a smooth surface of degree $d$ in $\bP^3$.
\end{skexample}

\begin{skFact}\label{old-eq:2}
  Let $W$ be a smooth variety and $X=X_1\cup X_2\subseteq W$ such that $X_1$ and
  $X_2$ are Cartier divisors in $W$. Then by adjunction 
  \begin{align*}
    K_X &\sim (K_W +X_1+X_2)\resto {X} \\
    K_{X_1} &\sim (K_W +X_1)\resto {X_1} \\
    K_{X_2} &\sim (K_W +X_2)\resto {X_2}, 
  \end{align*}
  and hence 
  \begin{align*}
    K_X\resto{X_1} &\sim K_{X_1}+X_{2}\resto{X_1}\\
    K_X\resto{X_2} &\sim K_{X_2}+X_{1}\resto{X_2}.
  \end{align*}
  It turns out that these latter equalitites are true under more general conditions
  and hence they allow one to check when the canonical divisor of a reducible variety
  is ample by working with the canonical divisor of its irreducible components.
\end{skFact}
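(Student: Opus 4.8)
The plan is to derive all five relations from a single tool, namely the adjunction formula for an effective Cartier divisor on a smooth variety, combined with the fact that restriction of Cartier divisor classes is additive. Since $W$ is smooth and $X_1,X_2$ are Cartier divisors sharing no common component, the reduced union $X=X_1\cup X_2$ is the Cartier divisor $X_1+X_2$, and each of $X$, $X_1$, $X_2$ is a Cartier divisor on $W$, hence a local complete intersection in $W$ and in particular Gorenstein. Thus each admits a canonical (dualizing) sheaf that is a line bundle, so $K_X$, $K_{X_1}$, $K_{X_2}$ are well defined up to linear equivalence even though $X$ itself is non-normal along $X_1\cap X_2$. This is exactly the situation, reviewed in \S\ref{ssec:canonical-sheaves}, in which one is permitted to speak of canonical divisors on Gorenstein (possibly non-normal) schemes.

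First I would record the adjunction formula in the form I need it. For any effective Cartier divisor $D$ on the smooth variety $W$, the closed embedding $D\hookrightarrow W$ is regular of codimension $1$, with conormal sheaf $\sO_W(-D)\resto D$; the duality for a regular embedding (fundamental local isomorphism) then gives $\omega_D\simeq \bigl(\omega_W\otimes\sO_W(D)\bigr)\resto D$, that is, $K_D\sim (K_W+D)\resto D$. Applying this in turn to $D=X=X_1+X_2$, to $D=X_1$, and to $D=X_2$ produces the three displayed equivalences.

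Next I would obtain the last two relations by one further restriction. Restricting the equivalence $K_X\sim (K_W+X_1+X_2)\resto X$ from $X$ down to the component $X_1$ and using additivity of restriction of Cartier classes, namely $(K_W+X_1+X_2)\resto{X_1}\sim (K_W+X_1)\resto{X_1}+X_2\resto{X_1}$, and then inserting the adjunction formula already established for $X_1$, yields $K_X\resto{X_1}\sim K_{X_1}+X_2\resto{X_1}$; the symmetric computation gives $K_X\resto{X_2}\sim K_{X_2}+X_1\resto{X_2}$. Here $X_2\resto{X_1}$ makes sense as a Cartier divisor on $X_1$ precisely because $X_2$ is Cartier on $W$.

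The only genuinely delicate point, and the step I would be most careful to justify, is the adjunction formula for the reducible, non-normal divisor $X$: one cannot merely quote the familiar $\omega_D\simeq\omega_W(D)\resto D$ for a smooth or normal $D$. Instead I would invoke that $X\hookrightarrow W$ is a regular embedding of codimension one, so that Gorenstein adjunction applies verbatim to the line bundle $\omega_X$ furnished by the Gorenstein property noted above. Once this is in place, the remaining steps are purely formal identities among restrictions of Cartier divisor classes, and the concluding remark, that these component-wise formulas let one test ampleness of $K_X$ on the pieces $X_1,X_2$, is then immediate.
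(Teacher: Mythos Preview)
Your proposal is correct and follows the same approach as the paper, which simply invokes adjunction for each of $X$, $X_1$, $X_2$ as Cartier divisors on $W$ and then restricts. Your added care in justifying adjunction for the non-normal divisor $X$ via the regular embedding/Gorenstein property is a welcome elaboration of what the paper leaves implicit.
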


\begin{skexample}\label{ex:stable-needs-lc}
  As before, let $f:X\to B$ be a flat morphism, $B$ a smooth curve, and $\phi:\wt
  X\to X$ a resolution of singularites. In this example assume that $X_0$ is a normal
  projective surface with $K_{X_0}$ ample and an isolated singular point $P\in\Sing
  X_0$ such that $X_0$ is isomorphic to a cone $\Xi_0\subseteq \bP^3$ as in
  Example~\ref{ex:cone} locally analytically near $P$.  Assume further that $X$ is
  smooth. One would like to see whether one may resolve the singular point $P\in X_0$
  and still stay within our moduli problem, i.e., that $K$ would remain ample. For
  this purpose one may assume that $P$ is the only singular point of $X_0$.

  Because of the assumption on the singularities one may assume that $\phi$ is the
  blowing up of $P\in X$ and let $\what X_0$ denote the strict transform of $X_0$ on
  $\wt X$.  Then $\wt X_0=\what X_0\cup E$ where $E\simeq \bP^2$ is the exceptional
  divisor of the blow up.  Clearly, $\phi:\what X_0\to X_0$ is the blow up of $P$ on
  $X_0$, so it is a smooth surface and $\what X_0\cap E$ is isomorphic to the degree
  $d$ curve over which $X$ is locally analytically a cone.

  One would like to determine the condition on $d$ that ensures that the canonical
  divisor of $\wt X_0$ is still ample. According to (\ref{old-eq:2}) this means that
  one needs that $K_E+\what X_0\resto E$ and $K_{\what X_0}+E\resto{\what X_0}$ be
  ample.  As $E\simeq \bP^2$, $\omega_E\simeq \sO_{\bP^2}(-3)$, so $\sO_E(K_E+\what
  X_0\resto E)\simeq \sO_{\bP^2}(d-3)$. This is ample if and only if $d>3$.

  As this computation is local near $P$ the only relevant issue about the ampleness
  of $K_{\what X_0}+E\resto{\what X_0}$ is whether it is ample in a neighbourhood of
  $E_0\leteq E\resto{\what X_0}$. By \eqref{claim:surface-ampleness} this is equivalent
  to asking when $(K_{\what X_0}+E_0)\cdot E_0$ is positive.
  
  \begin{skclaim}\label{claim:surface-ampleness}
    Let $Z$ be a smooth projective surface with non-negative Kodaira dimension
    and $\Gamma\subset Z$ an effective divisor. If $(K_Z+\Gamma)\cdot C>0$ for every
    proper curve $C\subset Z$, then $K_Z+\Gamma$ is ample.
  \end{skclaim}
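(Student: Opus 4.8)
The plan is to reduce the ampleness of $D\leteq K_Z+\Gamma$ to a single numerical inequality and then to exploit the hypothesis on the Kodaira dimension. By the Nakai--Moishezon criterion on a smooth projective surface, $D$ is ample if and only if $D^2>0$ and $D\cdot C>0$ for every irreducible curve $C\subset Z$. The second condition is exactly what we are given (every irreducible curve is in particular a proper curve), so the whole problem reduces to establishing the inequality $D^2>0$. Observe in passing that the hypothesis already shows $D$ is nef, since $D\cdot C>0$ for all curves forces $D\cdot C\geq 0$ for all curves.

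To get at $D^2$ I would expand
$$
D^2 = (K_Z+\Gamma)\cdot(K_Z+\Gamma) = D\cdot K_Z + D\cdot \Gamma,
$$
and treat the two terms separately. Since $\Gamma$ is effective, write $\Gamma=\sum a_i\Gamma_i$ with $a_i\geq 0$ and $\Gamma_i$ irreducible; then $D\cdot\Gamma=\sum a_i(D\cdot\Gamma_i)\geq 0$ because each $D\cdot\Gamma_i>0$ by hypothesis. For the term $D\cdot K_Z$ the assumption $\kappa(Z)\geq 0$ enters: non-negative Kodaira dimension means $h^0(Z,\omega_Z^{\otimes m})>0$ for some $m\in\bN_+$, so there is an effective divisor $M\in|mK_Z|$. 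Intersecting the curve-positive divisor $D$ with the effective $M$ gives $m(D\cdot K_Z)=D\cdot M\geq 0$, again because $D$ meets every component of $M$ non-negatively. Hence $D^2\geq 0$, recovering nefness, but now with extra structure in hand.

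The main obstacle is excluding the borderline case $D^2=0$, and this is where the effectivity of both $\Gamma$ and $M$ together with the \emph{strict} positivity on curves must be combined. If $D^2=0$, then, both summands above being non-negative, each of them vanishes. From $D\cdot M=0$, written as $\sum b_j(D\cdot M_j)=0$ with $b_j\geq 0$ and each $D\cdot M_j>0$, every $b_j$ must be $0$, so $M=0$; hence $mK_Z\sim 0$ and $K_Z\equiv 0$. Likewise $D\cdot\Gamma=\sum a_i(D\cdot\Gamma_i)=0$ forces $\Gamma=0$. But then $D=K_Z\equiv 0$, so $D\cdot C=0$ for every curve $C\subset Z$; since a projective surface contains curves (for instance hyperplane sections), this contradicts the hypothesis $D\cdot C>0$. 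Therefore $D^2>0$, and Nakai--Moishezon yields that $K_Z+\Gamma$ is ample. The only genuine content beyond bookkeeping is this final contradiction, which is precisely what uses $\kappa(Z)\geq 0$: without it $K_Z$ need not be pseudo-effective, the term $D\cdot K_Z$ could be negative, and both the argument and the statement would fail.
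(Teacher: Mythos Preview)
Your proof is correct and follows essentially the same approach as the paper: both reduce to Nakai--Moishezon, use $\kappa(Z)\geq 0$ to produce an effective member of $|mK_Z|$, and then use strict positivity on curves to force $(K_Z+\Gamma)^2>0$. The only cosmetic difference is that the paper combines $mK_Z$ and $m\Gamma$ into a single effective divisor $m(K_Z+\Gamma)$ and intersects $D$ with that in one stroke, whereas you split $D^2=D\cdot K_Z+D\cdot\Gamma$ and handle the two terms separately; your version is more explicit about the borderline case $D^2=0$, which the paper leaves implicit.
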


  \begin{proof}
    By the assumption on the Kodaira dimension there exists an $m>0$ such that $mK_Z$
    is effective, hence so is $m(K_Z+\Gamma)$. Then by the assumption on the
    intersection number, $(K_Z+\Gamma)^2>0$, so the statement follows by the
    Nakai-Moishezon criterium.
  \end{proof}

  Now, observe that by the adjunction formula $(K_{\what X_0}+E_0)\cdot E_0=\deg
  K_{E_0} = d(d-3)$ as $E_0$ is isomorphic to a plane curve of degree $d$.
  Again, one obtains the same condition as above and thus conclude that $K_{\wt X_0}$
  is ample if and only if $d>3$.

  Since the objects that one considers in the current moduli problem must have an
  ample canonical class, one may only replace $X_0$ by $\wt X_0$ if $d>3$.
  For our moduli problem this means that one has to allow cone singularities over
  curves of degree $d\leq 3$. The singularity one obtains for $d=2$ is a rational
  double point, but the singularity for $d=3$ is not, it is not even rational.

  In fact, the above calculation tells us more. One has that $K_{\what
    X_0}=\phi^*K_{X_0}+aE_0$ for some $a\in \bZ$. To compute $a$, first recall that
  $\deg K_{E_0}=d(d-3)$ and $E_0^2=-d$. Then 
  $$
  \deg K_{E_0}=(K_{\what X_0}+E_0)\cdot {E_0}=(\phi^*K_{X_0}+(a+1)E_0)\cdot {E_0} =
  (a+1) E_0^2 = - (a+1) d.
  $$
  Therefore $a=2-d$. In other words, the condition obtained above, that one needs to
  allow cone singularities over plane curves of degree $d\leq 3$ is equivalent to
  allowing log canonical singularities cf.\ \eqref{def:lc-sing}.
\end{skexample}

I have mentioned that stable singularities are not necessarily Cohen-Macau\-lay. Until
we identified the actual class we want to call stable this was more or less an empty
statement. By now, it is rather clear that log canonical singularities will belong to
the class we are looking for, so we might as well point to an example of non-CM log
canonical singularities.

\begin{skexample}\label{ex:lc-not-CM}
  Let $X$ be a cone over an abelian variety of dimension at least $2$. Then $X$ is
  log canonical, but not Cohen-Macaulay. 
\end{skexample}

As mentioned several times, one also has to deal with some non-normal singularities
and in fact in the example in \eqref{ex:stable-needs-lc} one does not really need
that $X$ be normal. In the next few subsections we will see examples of non-normal
singularities that one has to handle. In particular, we will see that one has to
allow the non-normal cousins of log canonical singularities. These are called
\emph{semi-log canonical} singularities and the reader can find their definition in
\eqref{sec:semi-log-canonical}.

\subsection{Normal crossings}

A \emph{normal crossing} singularity is one that is locally analytically (or
formally) isomorphic to the intersection of coordinate hyperplanes in a linear space.
In other words, it is a singularity locally analytically defined as $(x_1x_2\cdots
x_r=0)\subseteq\bA^n$ for some $r\leq n$. In particular, as opposed to the curve
case, for surfaces it allows for triple intersections.  However, triple (or higher)
intersections may be ``semi-resolved'': Let $X=(xyz=0)\subseteq\bA^3$.  Blow up the
origin $O\in\bA^3$, $\sigma:{\rm Bl}_O\bA^3\to \bA^3$ and consider the strict
transform of $X$, $\sigma: \wtilde X\to X$. Observe that $\wtilde X$ has only double
normal crossings and the morphism $\sigma$ is an isomorphism over $X\setminus\{O\}$.
Therefore, this is a {semi-resolution} as defined in
(\ref{def:semi-log-canonical}.\ref{item:semi-resolution}). Double normal crossings
cannot be resolved the same way, because the double locus is of codimension $1$, so
any morphism from any space with any kind of singularities that are not double normal
crossings would fail to be an isomorphism in codimension $1$.

Since normal crossings are (analytically) locally defined by a single equation, they
are {Gorenstein} and hence the canonical sheaf $\omega_X$ is still a line
bundle and so it makes sense to require it to be ample.

These singularities already appear for stable curves, so it is not surprising that
they are still here. As one wants to understand degenerations of one's preferred
families, one has to allow (at least) normal crossings.

Another important point to remember about normal crossings is that they are {\it not}
normal.  For some interesting and perhaps surprising examples of surfaces with normal
crossings see \cite{0705.0926v2}.

\subsection{Pinch points}

Another non-normal singularity that can occur as the limit of smooth varieties is the
\emph{pinch point}. It is locally analytically defined as
$(x_1^2=x_2 x_3^2)\subseteq\bA^n$ ($n\geq 3$).  This singularity is a double normal
crossing away from the {pinch point}.  Its normalization is smooth, but blowing
up the pinch point does not make it any better as shown by the example that follows.

\begin{skexample}
  Let $X=(x_1^2=x_2^2 x_3)\subseteq\bA^3$, where $x_1,x_2,x_3$ are linear coordinates
  on $\bA^3$, $O=(0,0,0)$ and compute ${\rm Bl}_OX$. First, recall that
  $$
  {\rm Bl}_O\bA^3=\{(x_1,x_2,x_3)\times [y_1:y_2:y_3] 
  \vert x_iy_j=x_jy_i \text{ for } i,j=1,2,3 \} \subset \bA^3\times \bP^2,
  $$
  where $y_1,y_2,y_3$ are homogenous coordinates on $\bP^2$. 

  \begin{enumerate}
  \item Assume that $y_1=1$.  Then $x_2=x_1y_2$ and $x_3=x_1y_3$ and the equation of
    the preimage of $X$ becomes $x_1^2=x_1^3y_2^2y_3$. This breaks up into $x_1^2=0$
    and $1=x_1y_2^2y_3$.  The former equation defines the exceptional divisor and the
    latter defines the strict transform of $X$, i.e., ${\rm Bl}_OX$. This does not
    have any points over $O\in X$, so on this chart, the blow up morphism ${\rm
      Bl}_OX\to X$ is an isomorphism and ${\rm Bl}_OX$ is smooth.
  \item Assume that $y_2=1$.  Then $x_1=x_2y_1$ and $x_3=x_2y_3$ and the equation of
    the preimage of $X$ becomes $x_2^2y_1^2=x_2^3y_3$. This breaks up into $x_2^2=0$
    and $y_1^2=x_2y_3$. Again, the former equation defines the exceptional divisor
    and the latter the strict transform of $X$, ${\rm Bl}_OX$. Notice that on this
    chart a coordinate system is given by $x_2, y_1, y_3$ and the equation defines a
    quadric cone. Then blowing up the vertex of the cone gives a resolution on this
    chart.
  \item Assume that $y_3=1$.  Then $x_1=x_3y_1$ and $x_2=x_3y_2$ and the equation of
    the preimage of $X$ becomes $x_3^2y_1^2=x_3^3y_2^2$. This breaks up as $x_3^2=0$
    and $y_1^2=y_2^2x_3$.  Again, the former equation defines the exceptional divisor
    and the latter the strict transform of $X$, ${\rm Bl}_OX$. Notice that on this
    chart a coordinate system is given by $x_3, y_1, y_3$ and the latter equation is
    the same as the one we started with. So, ${\rm Bl}_OX$ again has a pinch point.
  \end{enumerate}
  This computation shows that the blow-up of a pinch point will be, if anything, more
  singular, than the original and at best it can be resolved to be a pinch point
  again. 
\end{skexample}

From this example one concludes that a pinch point cannot be resolved or even just
made somewhat ``better'' by only trying to change it over the pinch point. It may
only be resolved by taking the normalization. As in the case of double normal
crossings, this is not an isomorphism in codimension $1$.

\begin{demo}{Observation}\label{obs:pinch-points-and-double-nc}
  Double normal crossings and pinch points share the following interesting
  properties:
  \begin{enumerate}
  \item Their normalization is smooth.
  \item The normalization morphism is \emph{not} an isomorphism in codimension $1$.
  \item It is not possible to find a partial resolution that is an isomorphism in
    codimension $1$ that would make them better in any reasonable sense.
  \end{enumerate}
\end{demo}

\begin{skrem}
  Notice that all normal crossings share the first two properties, but, in dimension
  at least $2$, not the third one as they may be partially resolved to double normal
  crossings.

  One concludes that double normal crossing and pintch point singularities are
  unavoidable.  However, at the same time, they should be viewed as the simplest
  non-normal singularities. In fact, in some sense they are much simpler than most
  normal singularities.

  Furthermore, all other singularities can be resolved to these: Any reduced scheme
  admits a {partial resolution} to a scheme with only double normal crossings and
  pinch points such that the resolution morphism is an isomorphism wherever the
  original scheme is smooth, or has only double normal crossings or pinch points
  \cite{kollar-semi-log-resolutions}. Of course, this only gives a partial resolution
  that is an isomorphism in codimension $1$ if the scheme one starts with has double
  normal crossings in codimension $1$ already. However, this turns out to be a
  condition one can achieve.

  We will discuss relevant partial resolutions in more detail in
  \eqref{def:semi-log-canonical}.
\end{skrem}

\subsection{Semi-log canonical singularities}
\label{sec:semi-log-canonical}

Next, I will make the definition of the non-normal version of log canonical
singularities precise.

\begin{skdefini}\label{def:semi-log-canonical}
  Let $X$ be a scheme of dimension $n$ and $x\in X$ a closed point.
  \begin{enumerate}
  \item $x\in X$ is a \emph{double normal crossing} if it is locally analytically (or
    formally) isomorphic to the singularity
    $$
    \left\{ 0\in (x_0x_1=0) \right\} \subseteq \left\{0\in \bA^{n+1}
    \right\},
    $$
    where $n\geq 1$.
  \item $x\in X$ is a \emph{pinch point} if it is locally analytically
    (or formally) isomorphic to the singularity
    $$
    \left\{ 0\in (x_0^2=x_1^2x_2) \right\} \subseteq \left\{0\in \bA^{n+1} \right\},
    $$
    where $n\geq 2$.
  \item $X$ is \emph{semi-smooth} if all closed points of $X$ are either smooth, or a
    double normal crossing, or a pinch point. In this case, unless $X$ is smooth,
    $D_X\leteq\Sing X\subseteq X$ is a smooth $(n-1)$-fold. If $\nu:\wtilde X\to X$
    is the normalization, then $\wtilde X$ is smooth and $\wtilde D_X\leteq
    \nu^{-1}(D_X)\to D_X$ is a double cover ramified along the pinch locus.
    Furthermore, the definition implies that if $X$ is semi-smooth, then it is
    Gorenstein. In particular, it admits a canonical sheaf $\omega_X$ which is a line
    bundle.
  \item \label{item:semi-resolution} A morphism, $\phi:Y\to X$ is a
    \emph{semi-resolution} if
    \begin{itemize}
    \item $\phi$ is proper,
    \item $Y$ is semi-smooth,
    \item no component of $D_Y$ is $\phi$-exceptional, and
    \item there exists a closed subset $Z\subseteq X$, with
      $\codim(Z,X)\geq 2$ such that 
      $$
      \phi\resto{\phi^{-1}(X\setminus Z)} : \phi^{-1}(X\setminus Z)
      \overset\simeq\to X\setminus Z
      $$
      is an isomorphism.
    \end{itemize}
    Let $E$ denote the exceptional divisor (i.e., the codimension 1
    part of the exceptional set, not necessarily the whole exceptional
    set) of $\phi$. Then $\phi$ is a \emph{good semi-resolution} if $E\union
    D_Y$ is a divisor with global normal crossings on $Y$.
  \item \label{def:slc} $X$ has \emph{semi-log canonical} (\emph{slc}) (resp.\
    \emph{semi-log terminal} (\emph{slt})) singularities if
    \begin{enumerate}
    \item $X$ is reduced,
    \item $X$ is $S_2$,
    \item $X$ admits a canonical sheaf $\omega_X$, which is a $\bQ$-line bundle of
      index $m$, and
    \item\label{item:9} there exists a good semi-resolution of singularities
      $\phi:\wtilde X\to X$ with exceptional divisor $E=\union E_i$ such that
      $\omega_{\wt X}^{m}\simeq \phi^*\omega_X^{[m]}\otimes \sO_{\wt X}(m\cdot\sum  a_i
      E_i)$ with $a_i\in \bQ$ and $a_i\geq -1$ (resp.\ $a_i > -1$) for all $i$.
    \end{enumerate}
  \end{enumerate}
\end{skdefini}

\begin{skrem}\label{rem:slc}
  A semi-smooth scheme has at worst hypersurface singularities, so in particular it
  is Gorenstein.  This means that condition
  (\ref{def:semi-log-canonical}.\ref{item:9}) implies that $X$ is $G_1$.  In other
  words it follows that $X$ admits a canonical sheaf. However,
  (\ref{def:semi-log-canonical}.\ref{item:9}) cannot be stated without assuming this
  first. On the other hand, it means that one may assume that $X$ is $G_1$ instead. In
  other words, without loss of generality one may define slc (resp.\ slt)
  singularities as those satisfying that
  \begin{enumerate}
  \item $X$ is reduced,
  \item $X$ is $G_1$ and $S_2$,
  \item $\omega_X$ is a $\bQ$-line bundle of index $m$, and
  \item there exists a good semi-resolution of singularities
    $\phi:\wtilde X\to X$ with exceptional divisor $E=\union E_i$ such that
    $\omega_{\wt X}^{m}\simeq \phi^*\omega_X^{[m]}\otimes \sO_{\wt X}(m\cdot\sum a_i
    E_i)$ with $a_i\in \bQ$ and $a_i\geq -1$ (resp.\ $a_i > -1$) for all $i$.
  \end{enumerate}
\end{skrem}
%
\begin{newnumrp}{}
  Furthermore, once one assumes that $X$ is $G_1$ and $S_2$, one may work with
  canonical divisors instead of canonical sheaves. In other words, we may also define
  slc (resp.\ slt) singularities as those satisfying that
  \begin{enumerate}
  \item $X$ is reduced,
  \item $X$ is $G_1$ and  $S_2$,
  \item $K_X$ is $\bQ$-Cartier, and 
  \item\label{item:10} there exists a good semi-resolution of singularities
    $\phi:\wtilde X\to X$ with exceptional divisor $E=\union E_i$ such that
    $K_{\wtilde X} \equiv \phi^*K_X + \sum a_i E_i$ with $a_i\in \bQ$ and $a_i\geq
    -1$ (resp.\ $a_i > -1$) for all $i$.
  \end{enumerate}
  Again, (\ref{def:semi-log-canonical}.\ref{item:10}) implies that $X$ is $G_1$, but
  one needs that assumption even to work with $K_X$. Of course, instead of $G_1$, one
  may start by assuming that $X$ admits a semi-resolution, then conclude that
  canonical divisors may be defined and then go on with the definition.
\end{newnumrp}

\begin{skrem}
  It is relatively easy to prove that if $X$ has semi-log canonical (resp.\ semi-log
  terminal) singularities, then the condition in
  (\ref{def:semi-log-canonical}.\ref{item:9}) follows for \emph{all} good
  semi-resolutions.
\end{skrem}

\begin{skrem}
  One may further generalize the notion of semi log canonical and define \emph{weakly
    semi log canonical} singularities as those that are seminormal, $S_2$ and with an
  appropriately chosen divisor on the normalization, that pair is log canonical. In
  this context semi-log canonical singularities are exactly those weakly semi-log
  canonical divisors that are $G_1$.
  For the precise definition and more details on these singularities and their
  relationships see \cite{KSS10}.
\end{skrem}

\begin{skrem}
  In the definition of a semi-resolution, one could choose to require that the
  exceptional set be a divisor. This leads to slightly different notions. It is still
  to be seen whether this variation leads to anything interesting (that is, anything
  interesting that is different from all the interesting things the definition above
  leads to).  For more on singularities related to semi-resolutions see \cite{KSB88},
  \cite{K+92}, and \cite{kollar-semi-log-resolutions}.
\end{skrem}

\medskip

\noindent
Now we are ready to define stable varieties in arbitrary dimensions.

\begin{skdefini}\label{def:stable}
  A variety $X$ is called \emph{stable} if  
  \begin{enumerate}
  \item $X$ is projective,
  \item $X$ has semi log canonical singularities, and
  \item $\omega_X$ is an ample $\bQ$-line bundle.
  \end{enumerate}
\end{skdefini}

\begin{skrem}
  Notice that if $\dim X=1$, then this is equivalent with the previous definition of
  a stable curve \eqref{def:stable-curves}.
\end{skrem}

We should also revisit the definition of \emph{stable families}. As opposed to the
case of curves, our stable varieties are canonically polarized by a $\bQ$-line bundle
and not a line bundle. As far as embedding into a projective space, computing
intersection numbers, and pulling back pluricanonical sheaves are concerned this does
not cause a big difference. However it introduces an additional element to which one
has to pay attention when dealing with families.

We do not simply want a family of canonically polarized varieties but a family where
these canonical polarizations are compatible. In other words, we want a relative
canonical polarization of the family that restricts to the canonical polarization of
the members of the family. In particular, we want that for a stable family $X\to B$,
\begin{equation}
  \label{eq:9}
  \omega_{X/B}\resto{X_b}\simeq \omega_{X_b}\quad\text{for all $b\in B$.}
\end{equation}

It turns out that for curves this follows from the other assumptions and as a matter
of fact we have also (secretly) assumed it during our quest for stable varieties cf.\
\eqref{obs:total-space}. 

The only point to keep in mind is that now if one wants to define stable families
only using properties of the fibers, as in the case of curves, then one might lose
this condition accidentally. For an example that this can actually happen, that is,
that there exists families of stable varieties that are not stable families in the
sense of our earlier requirements see \ref{app:A}.

\begin{skdefini}\label{def:weakly-stable-family}
  A morphism $f:X\to B$ is called a \emph{weakly stable family} if it satisfies the
  following conditions:
  \begin{enumerate}
  \item $f$ is flat and projective
  \item $\omega_{X/B}$ is a relatively ample $\bQ$-line bundle
  \item $X_b$ has semi log canonical singularities for all $b\in B$.
  \end{enumerate}
\end{skdefini}

This definition actually still hides one very important detail. The fact that
$\omega_{X/B}$ is a $\bQ$-line bundle means that it has an index, that is, an integer
$N\in\bN$ such that $\omega_{X/B}^{[N]}$ is a line bundle and this is the smallest
positive reflexive power of $\omega_{X/B}$ which is a line bundle. It follows that
then (cf.\ \cite[2.6]{Hassett-Kovacs04}),
\begin{equation}
  \label{seq:reflexive-restriction}
  \omega_{X/B}^{[N]}\resto{X_b} \simeq \omega_{X_b}^{[N]}.
\end{equation}

\noindent
In particular, $\omega_{X_b}$ is a $\bQ$-line bundle of index $m$ for some $m$ that
divides $N$. This means that $X_b$ may appear in weakly stable families whose
relative canonical sheaf is a $\bQ$-line bundle of index $N$ for any multiple of $m$.
This actually leads to a problem with respect to the moduli spaces of these families.
There may be weakly stable families all of whose members have canonical sheaves of
index $m$, but the relative canonical sheaf of the family has index $N>m$. In other
words one might encounter families that are admissible as families of varieties of
index $N$ but not as families of varieties of index $m$, even though all members have
index $m$. A reasonable resolution of this problem is to ask that besides
\eqref{seq:reflexive-restriction} a similar restriction should hold for all reflexive
powers of the relative canonical sheaf.

\begin{skdefini}\label{def:stable-family}
  A weakly stable family $f:X\to B$ is called a \emph{stable family} if it satisfies
  \emph{Koll\'ar's condition}, that is, for any $m\in\bN$
\begin{equation*}
  \omega_{X/B}^{[m]}\resto{X_b} \simeq \omega_{X_b}^{[m]}.
\end{equation*}
\end{skdefini}

\begin{skrem}
  Notice that it is always true that the double dual of the restriction of the
  relative pluricanonical sheaf is the corresponding pluricanonical sheaf of the
  fiber: 
  \begin{equation*}
    \left(\omega_{X/B}^{[m]}\resto{X_b}\right)^{**} \simeq \omega_{X_b}^{[m]},
  \end{equation*}
  so the main content of Koll\'ar's condition is that the restriction of all
  pluricanonical sheaves have to be reflexive. For more on the definition of stable
  families and the corresponding moduli functors see \cite[\S7]{MR2483953}.
\end{skrem}

\begin{skrem}
  Notice further that Koll\'ar's condition includes condition \eqref{eq:9}.
  Interestingly, it is not obvious that even this simple condition holds for weakly
  stable families. It holds for families of curves since stable curves are
  Gorenstein. It also holds for families of surfaces since stable surfaces are
  Cohen-Macaulay on account of being $S_2$ and this condition holds for families of
  Cohen-Macaulay varieties cf. \cite[3.5.1]{Conrad00}.

  However, stable varieties of dimension $\geq 3$ are not necessarily
  Cohen-Macau\-lay \eqref{ex:lc-not-CM}, so it is absolutely not obvious weather the
  relative canonical sheaf is invariant under base change. It turns out that this is
  actually true by \eqref{cor:base-change-for-omega} cf.\ \cite{KK10}. To see that
  this invariance under base change for weakly stable families is highly non-trivial
  the reader is referred to the examples in \cite{patakfalvi-base-change} that show
  that this statement is sharp in some reasonable sense.
\end{skrem}

\section{Duality and vanishing}\label{sec:duality-vanishing}

In this section I will first state two fundamental theorems that will be used later
and then list a few vanishing theorems that are important in both the minimal model
program and higher dimensional moduli theory.

Before anything else, we need a few definitions.

\begin{skdefini}\label{def:derived}
  Let $X$ be a complex scheme (i.e., a scheme of finite type over $\bC$) of dimension
  n. Let $D_{\rm filt}(X)$ denote the derived category of filtered complexes of
  $\sO_{X}$-modules with differentials of order $\leq 1$ and $D_{\rm filt, coh}(X)$
  the subcategory of $D_{\rm filt}(X)$ of complexes $\sfK$, such that for all $i$,
  the cohomology sheaves of $Gr^{i}_{\rm filt}\sfK$ are coherent cf.\
  \cite{DuBois81}, \cite{GNPP88}.  Let $D(X)$ and $D_{\rm coh}(X)$ denote the derived
  categories with the same definition except that the complexes are assumed to have
  the trivial filtration.  The superscripts $+, -, b$ carry the usual meaning
  (bounded below, bounded above, bounded).  Isomorphism in these categories is
  denoted by $\qis$.  A sheaf $\sF$ is also considered as a complex $\sF^\mydot$ with
  $\sF^0=\sF$ and $\sF^i=0$ for $i\neq 0$.  If $\sfK$ is a complex in any of the
  above categories, then $h^i(\sfK)$ denotes the $i$-th cohomology sheaf of $\sfK$.

  The right derived functor of an additive functor $F$, if it exists, is denoted by
  $\myR F$ and $\myR^iF$ is short for $h^i\circ \myR F$. Furthermore, $\bH^i$
  will denote $\myR^i\Gamma$,
  where $\Gamma$ is the functor of global sections.
  Note that according to this terminology, if $\phi\col Y\to X$ is a morphism and
  $\sF$ is a coherent sheaf on $Y$, then $\myR\phi_*\sF$ is the complex whose
  cohomology sheaves give rise to the usual higher direct images of $\sF$.

  Similarly, the left derived functor of an additive functor $F$, if it exists, is
  denoted by $\myL F$ and $\myL^iF$ is short for $h^i\circ \myL F$.
\end{skdefini}

The next two theorems are very important in studying cohomological properties of
singular varieties. 

\begin{skthm}
  [\protect{(Grothendieck Duality) \cite[VII]{MR0222093}}]
  \label{thm:GD}
  Let $\phi: Y\to X$ be a proper morphism between finite dimensional noetherian
  schemes that admit dualizing complexes. Then for any bounded complex $\sfG\in
  D^b(Y)$,
  $$
  \myR\phi_*\myR\sHom_Y(\sfG, \omega_Y^\mydot)\qis \myR\sHom_X(\myR\phi_* \sfG,
  \omega_X^\mydot).
  $$
\end{skthm}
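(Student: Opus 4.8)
The plan is to deduce the statement from the existence and formal properties of the \emph{twisted inverse image functor} (the exceptional pullback) $\phi^!$, following the development in \cite{MR0222093}. The key structural input is that for a proper morphism $\phi\col Y\to X$ the derived pushforward $\myR\phi_*$, regarded as a functor $D^+_{\rm coh}(Y)\to D^+_{\rm coh}(X)$, admits a right adjoint $\phi^!$, together with a sheafified (local) refinement of this adjunction of the form
$$
\myR\phi_*\myR\sHom_Y(\sfG,\phi^!\sfF)\qis\myR\sHom_X(\myR\phi_*\sfG,\sfF).
$$
Granting this, the theorem is the special case obtained by feeding in the dualizing complex, using the compatibility $\phi^!\omega_X^\mydot\qis\omega_Y^\mydot$.

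First I would construct $\phi^!$ on two elementary classes of morphisms. For a finite morphism $\phi$, set $\phi^\flat\sfF\leteq\myR\sHom_X(\phi_*\sO_Y,\sfF)$, equipped with its natural $\phi_*\sO_Y$-module structure; the duality isomorphism in this case,
$$
\phi_*\myR\sHom_Y(\sfG,\phi^\flat\sfF)\qis\myR\sHom_X(\myR\phi_*\sfG,\sfF),
$$
is a derived avatar of the change-of-rings adjunction $\myR\sHom_A(M,\myR\sHom_B(A,N))\qis\myR\sHom_B(M,N)$ for a ring map $B\to A$ with $M$ an $A$-module and $N$ a $B$-module. For a smooth morphism $\phi$ of relative dimension $n$, set $\phi^!\sfF\leteq\myL\phi^*\sfF\otimes\omega_{Y/X}[n]$ with $\omega_{Y/X}=\det\Omega_{Y/X}$; here the duality isomorphism is the relative Serre duality statement, whose crux is the projection $p\col\bP^N_X\to X$, where the trace map furnishes $\myR p_*\,\omega_{\bP^N_X/X}[N]\qis\sO_X$ and the classical computation of the cohomology of $\sO(d)$ on projective space yields the adjunction.

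Next I would glue these together. Since the morphisms in question are, after Chow's lemma and Nagata compactification, factorizable as a closed immersion followed by a projection $\bP^N_X\to X$, every such $\phi$ can be written as a composition of the two elementary types. I would then verify the pseudofunctoriality isomorphism $(\psi\circ\phi)^!\qis\phi^!\circ\psi^!$ and the independence of the resulting $\phi^!$ of the chosen factorization; these compatibilities propagate the duality isomorphism from the two building blocks to arbitrary proper $\phi$.

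Finally, I would specialize. Taking $\sfF=\omega_X^\mydot$ and using that $\phi^!$ carries a dualizing complex to a dualizing complex, normalized so that $\phi^!\omega_X^\mydot\qis\omega_Y^\mydot$ --- which, in the embedded setting $Y\subseteq\bP^M$ used to define these complexes in \S\ref{ssec:canonical-sheaves}, is immediate from $\omega_Y^\mydot\qis\myR\sHom_{\bP^M}(\sO_Y,\omega_{\bP^M}[M])$ together with pseudofunctoriality applied to $Y\to\bP^M\to\Spec k$ --- the displayed adjunction becomes exactly the asserted isomorphism. The main obstacle lies entirely in the first two steps: constructing $\phi^!$ and proving the two base-case dualities together with their compatibility under composition is the substantial content, occupying the bulk of \cite{MR0222093}. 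Once that machinery is in place, the statement for dualizing complexes is a purely formal specialization.
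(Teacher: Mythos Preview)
The paper does not supply its own proof of this statement; it is quoted directly from \cite[VII]{MR0222093} and used as a black box. Your outline is a faithful sketch of the construction in that reference---building $\phi^!$ via the finite case $\phi^\flat$ and the smooth case, gluing by pseudofunctoriality, and specializing to $\sfF=\omega_X^\mydot$---so there is nothing to compare against beyond noting that your proposal accurately summarizes the cited source.
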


\medskip

\begin{skthm}
  [\protect{(Adjointness of $\phi_*$ and $\phi^*$) \cite[II.5.10]{MR0222093}}] 
  \label{thm:adjoint}
  Let $\phi: Y\to X$ be a proper morphism. Then for any bounded complexes $\sfF\in
  D^b(X)$ and $\sfG\in D^b(Y)$,
  $$
  \myR\phi_*\myR\sHom_Y(\myL\phi^*\sfF, \sfG)\qis \myR\sHom_X(\sfF,\myR\phi_*\sfG). 
  $$
\end{skthm}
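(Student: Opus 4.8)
The plan is to deduce the statement from the classical, underived sheaf-level adjunction together with the formal compatibility of $\myL\phi^*$ and $\myR\phi_*$ with the internal tensor--Hom structure; the quickest conceptual route is a Yoneda argument. Recall that $(\myL\phi^*,\myR\phi_*)$ is an adjoint pair on the derived categories, that $\myL\phi^*$ is monoidal, i.e. $\myL\phi^*(\sfT\otimes^{\myL}\sfF)\qis \myL\phi^*\sfT\otimes^{\myL}\myL\phi^*\sfF$, and that one has the derived tensor--Hom adjunction $\Hom_{D(Z)}(\sfA\otimes^{\myL}\sfB,\sfC)\simeq \Hom_{D(Z)}(\sfA,\myR\sHom_Z(\sfB,\sfC))$. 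For an arbitrary $\sfT\in D^b(X)$ one then computes
\begin{align*}
 \Hom_{D(X)}\big(\sfT,\ \myR\phi_*\myR\sHom_Y(\myL\phi^*\sfF,\sfG)\big)
   &\simeq \Hom_{D(Y)}\big(\myL\phi^*\sfT,\ \myR\sHom_Y(\myL\phi^*\sfF,\sfG)\big)\\
   &\simeq \Hom_{D(Y)}\big(\myL\phi^*\sfT\otimes^{\myL}\myL\phi^*\sfF,\ \sfG\big)\\
   &\simeq \Hom_{D(Y)}\big(\myL\phi^*(\sfT\otimes^{\myL}\sfF),\ \sfG\big)\\
   &\simeq \Hom_{D(X)}\big(\sfT\otimes^{\myL}\sfF,\ \myR\phi_*\sfG\big)\\
   &\simeq \Hom_{D(X)}\big(\sfT,\ \myR\sHom_X(\sfF,\myR\phi_*\sfG)\big),
\end{align*}
using in turn the $(\myL\phi^*,\myR\phi_*)$-adjunction, tensor--Hom, monoidality, the $(\myL\phi^*,\myR\phi_*)$-adjunction again, and tensor--Hom again. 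All isomorphisms are natural in $\sfT$, so the Yoneda lemma in $D(X)$ yields the asserted isomorphism of the two objects.

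To make this self-contained in the style of \cite{MR0222093} and avoid invoking the unbounded formalism, I would instead argue by explicit resolutions, reducing everything to the ordinary sheaf adjunction
$$
\phi_*\sHom_Y(\phi^*\sfE,\sfI)\simeq \sHom_X(\sfE,\phi_*\sfI),
$$
which is checked on each open $U\subseteq X$ using $(\phi_*\sfI)\resto U\simeq \phi_*(\sfI\resto{\phi^{-1}U})$ and the usual $(\phi^*,\phi_*)$-adjunction. First I would resolve $\sfF$ by a bounded-above complex $\sfP^\mydot$ of finite-rank locally free $\sO_X$-modules, so that $\myL\phi^*\sfF\qis \phi^*\sfP^\mydot$ is again a complex of finite locally free $\sO_Y$-modules, and resolve $\sfG$ by a complex $\sfI^\mydot$ of injective $\sO_Y$-modules. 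Then $\myR\sHom_Y(\myL\phi^*\sfF,\sfG)\qis \sHom_Y^\mydot(\phi^*\sfP^\mydot,\sfI^\mydot)$, while $\myR\sHom_X(\sfF,\myR\phi_*\sfG)\qis \sHom_X^\mydot(\sfP^\mydot,\phi_*\sfI^\mydot)$, the latter because $\sHom_X(\sfP^\mydot,-)$ is exact. Applying the underived adjunction term by term and passing to total complexes identifies $\phi_*\sHom_Y^\mydot(\phi^*\sfP^\mydot,\sfI^\mydot)$ with $\sHom_X^\mydot(\sfP^\mydot,\phi_*\sfI^\mydot)$, so it remains only to verify that the left-hand $\phi_*$ already computes $\myR\phi_*$.

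That last point carries the genuine content, and it is the step I expect to be the main obstacle. Each term $\sHom_Y(\phi^*\sfP^j,\sfI^k)\simeq(\phi^*\sfP^j)^{\vee}\otimes\sfI^k$ is injective, since $\Hom_Y\big(-,\sfI^k\otimes(\phi^*\sfP^j)^{\vee}\big)\simeq \Hom_Y\big(-\otimes\phi^*\sfP^j,\sfI^k\big)$ and $-\otimes\phi^*\sfP^j$ is exact; being injective it is flasque, hence $\phi_*$-acyclic. Therefore $\myR\phi_*$ of the complex $\sHom_Y^\mydot(\phi^*\sfP^\mydot,\sfI^\mydot)$ is computed by applying $\phi_*$ naively, and the two sides coincide. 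The delicate bookkeeping is thus (i) the existence of the finite locally free resolution $\sfP^\mydot$, which uses that $X$ has enough locally free sheaves (automatic for the quasi-projective $X$ relevant to stable varieties), and (ii) the convergence of the $\sHom$ double complexes, ensured by $\sfP^\mydot$ being bounded above and $\sfI^\mydot$ bounded below. Naturality of the classical adjunction makes the identification compatible with all differentials, which completes the proof.
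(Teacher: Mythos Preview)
Your argument is correct; both the Yoneda approach and the explicit resolution approach are standard ways to establish this derived adjunction, and your handling of the acyclicity step is fine (the finite-locally-free resolution of $\sfF$ is the right hypothesis to insist on, and is guaranteed in the quasi-projective setting relevant here).

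There is nothing to compare against, however: the paper does not prove this statement at all. It is simply quoted as a foundational result from \cite[II.5.10]{MR0222093} and then invoked later (e.g., in the proof of \eqref{thm:elkik-with-proof}). So your write-up supplies a proof where the paper only gives a citation.
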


Vanishing theorems have played a central role in algebraic geometry for the last
couple of decades, especially in classification theory. Koll\'ar \cite{Kollar87b}
gives an introduction to the basic use of vanishing theorems as well as a survey of
results and applications available at the time. For more recent results one should
consult \cite{MR853449,EV92,Ein97,Kollar97e,KSmith95,Kovacs00c,Kovacs02,Kovacs03c,
  Kovacs03a}.  Because of the availability of those surveys, I will only recall
statements that are important for the present article. Nonetheless, any discussion of
vanishing theorems should start with the fundamental vanishing theorem of Kodaira.

\begin{skthm}[\cite{Kodaira53}]\label{kodaira}
  Let $Y$ be a smooth complex projective variety and $\sL$ an ample line bundle on
  $Y$. Then
  $$
  \coh i.Y.\omega_Y\otimes\sL.=0 \text{ for }i\neq 0.
  $$
\end{skthm}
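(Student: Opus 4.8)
The plan is to prove the vanishing by Kodaira's original Bochner-type argument on the underlying compact complex manifold. First I would dispose of the trivial ranges: since $Y$ is projective of dimension $n=\dim Y$, one has $\coh i.Y.\omega_Y\otimes\sL.=0$ automatically for $i<0$ and for $i>n$, so only $0<i\leq n$ needs work. By Serre's \textsc{gaga} the algebraic coherent cohomology coincides with its analytic counterpart, so I may pass to the analytic category and use the Dolbeault isomorphism to rewrite
$$
\coh i.Y.\omega_Y\otimes\sL. \simeq H^{n,i}_{\delbar}(Y,\sL),
$$
the $\sL$-valued $\delbar$-cohomology in bidegree $(n,i)$. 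Here an $\sL$-valued $(n,i)$-form is the same as an $(\omega_Y\otimes\sL)$-valued $(0,i)$-form, which is why the top holomorphic degree $p=n$ will be exactly the relevant one.

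Next I would use ampleness to install the right geometry. Because $\sL$ is ample, it carries a smooth Hermitian metric $h$ whose Chern curvature $\Theta_h$ is a positive $(1,1)$-form; declaring $\om=i\Theta_h$ to be the fundamental form equips $Y$ with a K\"ahler metric. With respect to this metric on $Y$ and the metric $h$ on $\sL$, Hodge theory for the $\delbar$-Laplacian $\Delta_{\delbar}=\delbar\,\delbar^*+\delbar^*\delbar$ on the compact manifold $Y$ provides, in each bidegree, a unique harmonic representative of every class. Thus it suffices to show that every $\sL$-valued \emph{harmonic} $(n,i)$-form vanishes once $i>0$.

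The heart of the matter is the Bochner--Kodaira--Nakano identity
$$
\Delta_{\delbar}=\Delta_{\pa}+\bigl[\,i\Theta_h,\LA\,\bigr],
$$
where $\LA$ is the adjoint of wedging with $\om$ and $\Delta_{\pa}=\pa_h\pa_h^*+\pa_h^*\pa_h$ is the Laplacian of the $(1,0)$-part of the Chern connection. Pairing against a harmonic form $\alpha$, so that $\Delta_{\delbar}\alpha=0$, and integrating over $Y$ gives
$$
0=\|\pa_h\alpha\|^2+\|\pa_h^*\alpha\|^2+\int_Y\bigl\langle\,[\,i\Theta_h,\LA\,]\alpha,\,\alpha\,\bigr\rangle\,dV.
$$
Diagonalizing $i\Theta_h$ in an orthonormal coframe with eigenvalues $\gamma_1,\dots,\gamma_n>0$, a direct computation shows that on a component of bidegree $(n,i)$ indexed by a multi-index $K=\{k_1,\dots,k_i\}$ the curvature operator $[\,i\Theta_h,\LA\,]$ acts as multiplication by $\gamma_{k_1}+\cdots+\gamma_{k_i}$ (the contribution $-\sum_j\gamma_j$ present in general bidegree is cancelled by the full holomorphic index $p=n$). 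Since all $\gamma_j>0$, this operator is strictly positive as soon as $i\geq 1$, so the last integrand is $\geq 0$ and vanishes only where $\alpha=0$. As all three summands above are nonnegative with zero total, each must vanish, forcing $\alpha\equiv 0$ and hence $H^{n,i}_{\delbar}(Y,\sL)=0$ for $i>0$, which is the assertion.

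The step I expect to be the main obstacle is the sign of the curvature term: setting up the Bochner--Kodaira--Nakano identity carefully (the K\"ahler commutation relations among $\delbar$, $\pa_h$, $\LA$ and the Lefschetz operator) and then checking that it is precisely in top holomorphic degree $p=n$ that the operator is positive for \emph{all} $i\geq 1$. In other bidegrees the eigenvalue sum can have either sign, which is exactly why the clean statement is about $\omega_Y\otimes\sL$ rather than an arbitrary twist. Everything else---\textsc{gaga}, the existence of a positively curved metric from ampleness, and the harmonic representatives via elliptic theory on the compact $Y$---is standard once this identity and its positivity are in place. (A later alternative, the Deligne--Illusie reduction to characteristic $p$, would also prove the statement purely algebraically, but the analytic route above matches the cited source \cite{Kodaira53}.)
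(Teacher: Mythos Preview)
Your argument is the classical Bochner--Kodaira proof and is correct. Note, however, that the paper does not supply its own proof of this statement: it is quoted as a foundational result with a citation to \cite{Kodaira53}, and the surrounding section merely records it (together with its Grauert--Riemenschneider and Serre-dual variants) for later use. So there is nothing to compare against beyond observing that your analytic route via the Bochner--Kodaira--Nakano identity is precisely the method of the cited source.
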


This has been generalized in several ways, but as noted above I will only state what
I use in this article. For the many other generalizations the reader is invited to
peruse the above references.  

The original statement of Kodaira was generalized to allow semi-ample and big line
bundles in place of ample ones by Grauert and Riemenschneider.

\begin{skthm}[\cite{Gra-Rie70b}]\label{gr}
  Let $Y$ be a smooth complex projective variety and $\sL$ a semi-ample and big line
  bundle on $Y$. Then
  $$
  \coh i.Y.\omega_Y\otimes\sL.=0\text{ for }i\neq 0.
  $$
\end{skthm}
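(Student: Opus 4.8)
The plan is to deduce the statement from Kodaira's theorem \ref{kodaira} by attaching a morphism to $\sL$ and transporting the cohomology to a situation where an honest ample bundle is present. First I would exploit semi-ampleness: choosing $m\gg 0$ so that $\sL^{m}$ is generated by global sections, the sections define a morphism $f=\phi_{\sL^{m}}\colon Y\to Z$, where $Z\subseteq\bP^{N}$ is the image. Since $\sL$ is big, $f$ is birational onto $Z$, so $\dim Z=\dim Y=:n$; after Stein factorization I may assume $Z$ is normal and $f_{*}\sO_{Y}\simeq\sO_{Z}$, and there is an ample line bundle $\sA$ on $Z$ with $f^{*}\sA\simeq\sL^{m}$. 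The Leray spectral sequence for $f$ now breaks the computation of $H^{i}(Y,\omega_{Y}\otimes\sL^{m})$ into a \emph{local} part, governed by the higher direct images $R^{q}f_{*}$, and a \emph{global} part living on the base $Z$.

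The heart of the argument is the local vanishing
$$
R^{q}f_{*}\omega_{Y}=0\qquad\text{for }q>0,
$$
the relative form of the theorem. To establish it I would work locally on $Z$ and invoke Grothendieck duality \ref{thm:GD}: applied to $\sfG=\sO_{Y}$ it gives
$$
\myR f_{*}\omega_{Y}^{\mydot}\qis\myR\sHom_{Z}(\myR f_{*}\sO_{Y},\omega_{Z}^{\mydot}),
$$
so that, using $\omega_{Y}^{\mydot}\qis\omega_{Y}[n]$, the vanishing of the sheaves $R^{q}f_{*}\omega_{Y}$ is controlled by the shape of $\myR f_{*}\sO_{Y}$, which in turn is handled by Kodaira vanishing \ref{kodaira} on a resolution dominated by $Y$. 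Granting this, the projection formula yields $\myR f_{*}(\omega_{Y}\otimes f^{*}\sA)\qis(f_{*}\omega_{Y})\otimes\sA$, the spectral sequence collapses, and one obtains
$$
H^{i}(Y,\omega_{Y}\otimes\sL^{m})\simeq H^{i}(Z,(f_{*}\omega_{Y})\otimes\sA).
$$
The right-hand side vanishes for $i>0$ by the singular version of Kodaira vanishing for the ample bundle $\sA$ twisting the canonical push-forward $f_{*}\omega_{Y}$, which again reduces to \ref{kodaira} through the resolution $f$ itself.

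This proves the assertion for $\sL^{m}$, and the genuinely delicate point is to descend from $\sL^{m}$ to $\sL$. Because $\sL$ is $f$-trivial up to torsion (as $\sL^{m}=f^{*}\sA$), the same local vanishing applies to $\omega_{Y}\otimes\sL$, and one is reduced to a vanishing on $Z$ for $f_{*}(\omega_{Y}\otimes\sL)$, which now carries only a \emph{fractional}, i.e.\ $\bQ$-ample, twist of $\sA$ rather than an honest ample line bundle. I expect this $\sL$-versus-$\sL^{m}$ passage to be the main obstacle: it is precisely the gap between Kodaira's theorem and its semi-ample-and-big strengthening, and bridging it requires the Kawamata--Viehweg-type input that ample may be replaced by $\bQ$-ample in \ref{kodaira}. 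An alternative route that avoids the singular base is induction on $n$, cutting by a general smooth member $D\in|\sL^{a}|$ (with $\sL^{a}$ base-point-free) and using adjunction $\omega_{Y}\otimes\sL^{a}\resto{D}\simeq\omega_{D}$ to feed the inductive hypothesis for the semi-ample and big bundle $\sL\resto{D}$ through the restriction sequence; there the same difficulty resurfaces as the need to anchor the induction at $\sL^{m}=f^{*}\sA$, which is exactly where the fibration and \ref{kodaira} re-enter.
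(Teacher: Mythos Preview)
The paper does not give its own proof of this statement: it is simply quoted with a citation to \cite{Gra-Rie70b}, as a black-box input to the later discussion. So there is nothing to compare against on the paper's side.

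As for your sketch, the overall architecture (produce the semi-ample fibration $f:Y\to Z$ with $f^*\sA\simeq\sL^m$, collapse Leray via a relative vanishing, then apply an ample vanishing on the base) is the standard one, and you correctly isolate the descent from $\sL^m$ to $\sL$ as the genuine content beyond Kodaira. However, your handling of the relative step contains a real gap. You propose to obtain $R^qf_*\omega_Y=0$ from Grothendieck duality by ``controlling $\myR f_*\sO_Y$ via Kodaira vanishing on a resolution dominated by $Y$''. But $Y$ is already smooth, so there is no further resolution to pass to, and Kodaira's theorem on $Y$ says nothing about the sheaves $R^qf_*\sO_Y$: these are local objects on $Z$ and are not governed by the vanishing of global cohomology twisted by an ample line bundle. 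In fact, knowing $\myR f_*\sO_Y\qis\sO_Z$ is precisely the statement that $Z$ has rational singularities, which (via Kempf's criterion, \eqref{thm:kempf}) is essentially equivalent to the relative vanishing you are trying to prove. So the duality maneuver, as written, is circular.

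The honest routes to $R^qf_*\omega_Y=0$ are either the original analytic $L^2$ argument of Grauert--Riemenschneider, a Hodge-theoretic/injectivity argument (Koll\'ar), or a cyclic-cover reduction in the Kawamata--Viehweg style---the latter being exactly the mechanism you already flag as needed for the $\sL^m\rightsquigarrow\sL$ step. In other words, both of your ``hard points'' collapse to the same missing ingredient, and once you import that ingredient you may as well prove the full statement directly rather than going through $\sL^m$ first.
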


\noindent
This also has a relative version:

\begin{skthm}[\cite{Gra-Rie70b}]\label{GR}
  Let $Y$ be a smooth complex variety, $\phi:Y\to X$ a projective birational
  morphism, and $\sL$ a semi-ample line bundle on $Y$. Then
  $$
  \myR^i\phi_*\left(\omega_Y\otimes\sL\right)=0\text{ for }i\neq 0.
  $$
\end{skthm}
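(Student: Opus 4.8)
The plan is to deduce this relative vanishing from the absolute Grauert--Riemenschneider theorem \eqref{gr} by the standard twisting trick, after reducing to the case of a projective base. The assertion $\myR^i\phi_*(\omega_Y\otimes\sL)=0$ is local on $X$, so I would first pass to a quasi-projective $X$, and then compactify the situation: extend $\phi$ to a projective birational morphism of smooth projective varieties so that the original data is recovered over the original open subset, noting that higher direct images localize, so vanishing over the compactification gives vanishing over $X$. The \textbf{main obstacle} is precisely this reduction: one must extend $\sL$ to a \emph{semi-ample} line bundle on the compactified total space, and semi-ampleness is a global condition that need not survive an arbitrary compactification. Granting this technical point, one may assume from now on that $X$ is projective and $Y$ is smooth and projective.

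Now for the core argument. Fix an ample line bundle $\sA$ on $X$. Since $\phi$ is birational and dominant, pullback of global sections is injective, so $\dim\coh 0.Y.\phi^*\sA^k.\geq \dim\coh 0.X.\sA^k.$, and the right-hand side grows like $k^{\dim X}=k^{\dim Y}$; hence $\phi^*\sA$ is big, and it is semi-ample as the pullback of a globally generated bundle. Because $\sL$ is semi-ample (hence nef), the bundle $\sL\otimes\phi^*\sA^m$ is again semi-ample (a tensor of semi-ample bundles) and big (it dominates the big nef bundle $\phi^*\sA^m$) for every $m\geq 1$. For $m\gg 0$ Serre's theorems supply two facts for the finitely many relevant $q$: (i) $\coh p.X.\myR^q\phi_*(\omega_Y\otimes\sL)\otimes\sA^m.=0$ for all $p>0$, and (ii) $\myR^q\phi_*(\omega_Y\otimes\sL)\otimes\sA^m$ is globally generated.

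By (i) together with the projection formula $\myR^q\phi_*(\omega_Y\otimes\sL\otimes\phi^*\sA^m)\simeq \myR^q\phi_*(\omega_Y\otimes\sL)\otimes\sA^m$, the Leray spectral sequence of $\phi$ applied to $\omega_Y\otimes\sL\otimes\phi^*\sA^m$ degenerates along the row $p=0$, yielding
$$
\coh q.Y.\omega_Y\otimes\sL\otimes\phi^*\sA^m. \simeq \coh 0.X.\myR^q\phi_*(\omega_Y\otimes\sL)\otimes\sA^m..
$$
Applying the absolute vanishing \eqref{gr} to the smooth projective variety $Y$ and the semi-ample and big line bundle $\sL\otimes\phi^*\sA^m$ kills the left-hand side for every $q>0$. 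Thus, for $q>0$, the coherent sheaf $\myR^q\phi_*(\omega_Y\otimes\sL)\otimes\sA^m$ is globally generated by (ii) yet has no global sections, so it vanishes; twisting back by $\sA^{-m}$ gives $\myR^q\phi_*(\omega_Y\otimes\sL)=0$ for all $q>0$, which is the desired conclusion.
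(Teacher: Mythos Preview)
The paper does not give a proof of this theorem; it is simply cited as the classical result of Grauert and Riemenschneider. So there is no argument in the paper to compare yours against, and your proposal stands or falls on its own.

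Your core argument in the projective case is correct and is the standard deduction of a relative vanishing theorem from an absolute one: twist by a high power of the pullback of an ample line bundle, use Serre vanishing to collapse the Leray spectral sequence to its bottom row, invoke the absolute theorem \eqref{gr}, and conclude by global generation. Nothing to fault there.

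The gap you flag in the reduction step is genuine, and ``granting this technical point'' is not innocent: semi-ampleness of a line bundle on an open subvariety has no reason to extend over a smooth compactification. One clean way to close the gap is a cyclic covering trick that eliminates $\sL$ altogether. Choose $n$ with $\sL^n$ base-point-free and a general $D\in|\sL^n|$ (smooth by Bertini, since $Y$ is smooth and we are in characteristic zero), and form the degree-$n$ cyclic cover $\pi:Y'\to Y$ branched along $D$; then $Y'$ is smooth and
\[
\pi_*\omega_{Y'}\simeq\bigoplus_{j=0}^{n-1}\omega_Y\otimes\sL^j.
\]
Since $\pi$ is finite, $R^i\phi_*(\omega_Y\otimes\sL)$ is a direct summand of $R^i(\phi\circ\pi)_*\omega_{Y'}$. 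Now Stein-factorize $\phi\circ\pi$ as $Y'\xrightarrow{g}X'\xrightarrow{h}X$ with $g$ projective birational and $h$ finite; then $R^i(\phi\circ\pi)_*\omega_{Y'}=h_*R^ig_*\omega_{Y'}$, so it suffices to show $R^ig_*\omega_{Y'}=0$ for $i>0$. But this is exactly the case $\sL=\sO_{Y'}$ of the theorem, and for \emph{that} your compactification-and-twist argument goes through without obstruction, since there is no line bundle to extend.
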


\noindent
By Serre duality both \eqref{kodaira} and \eqref{gr} has a dual version:

\begin{skthm}\label{gr-dual}
  Let $Y$ be a smooth complex projective variety and $\sL$ a semi-ample and big line
  bundle on $Y$. Then
  $$
  \coh j.Y.\sL^{-1}.=0\text{ for }j\neq \dim Y.
  $$
\end{skthm}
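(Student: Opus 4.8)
The plan is to obtain this statement as the Serre dual of the Grauert--Riemenschneider vanishing \eqref{gr}, precisely as the sentence introducing it announces. Write $n=\dim Y$. Since $Y$ is smooth and projective, Serre duality furnishes, for every locally free sheaf $\sF$ on $Y$, a natural isomorphism
$$
\coh j.Y.\sF. \simeq \left(\coh n-j.Y.\omega_Y\otimes\sF^\vee.\right)^\vee,
$$
where on the right $(-)^\vee$ denotes the $k$-linear dual of the cohomology group and $\sF^\vee$ the dual sheaf of $\sF$.

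First I would specialize this to $\sF=\sL^{-1}$, which is legitimate because $\sL$, being a line bundle, is locally free, and hence so is its inverse. Then $\sF^\vee\simeq\sL$, so the isomorphism becomes
$$
\coh j.Y.\sL^{-1}. \simeq \left(\coh n-j.Y.\omega_Y\otimes\sL.\right)^\vee.
$$
Next I would invoke the hypothesis that $\sL$ is semi-ample and big, so that \eqref{gr} applies to $\omega_Y\otimes\sL$ and yields $\coh i.Y.\omega_Y\otimes\sL.=0$ for all $i\neq 0$. Taking $i=n-j$, the right-hand side of the previous display vanishes whenever $n-j\neq 0$, that is, whenever $j\neq n$; and since the dual of the zero vector space is zero, this forces $\coh j.Y.\sL^{-1}.=0$ for every $j\neq\dim Y$, which is exactly the assertion.

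There is no genuine obstacle in this argument: it is a purely formal consequence of \eqref{gr} combined with Serre duality on the smooth projective variety $Y$. The one point meriting attention is the bookkeeping of cohomological degrees, namely tracking that the unique non-vanishing degree $i=0$ for $\omega_Y\otimes\sL$ transports, under the duality isomorphism, to the top degree $j=n=\dim Y$ for $\sL^{-1}$. Both hypotheses needed in order to apply Serre duality, the smoothness and the projectivity of $Y$, are part of the assumptions of the statement, so nothing further must be verified.
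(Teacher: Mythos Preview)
Your argument is correct and is exactly the approach the paper indicates: the theorem is stated as the Serre dual of \eqref{gr}, and you have simply written out that duality explicitly.
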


What would be the dual version of \eqref{GR} in the same spirit?  Instead of Serre
duality one would have to use Grothendieck duality:

Let $Y$ be a smooth complex variety of dimension $d$, $\phi:Y\to X$ a projective
morphism, and $\sL$ a semi-ample line bundle on $Y$. Then
\begin{multline}
  \label{eq:GR-to-dual}
  \myR\sHom_X(\myR\phi_*(\omega_Y\otimes \sL), \omega_X^\mydot)\qis \\ \qis
  \myR\phi_*\myR\sHom_Y(\omega_Y\otimes \sL, \omega_Y[d])\qis 
  \myR\phi_*\sL^{-1}[d] 
\end{multline}

In the case of \eqref{kodaira} and \eqref{gr} $X=\Spec \bC$, so $\omega_X^\mydot\qis
\bC$. Then the the left hand side is quasi-isomorphic to the dual of
$\myR\phi_*(\omega_X\otimes\sL)\qis \phi_*(\omega_X\otimes\sL)\simeq H^0(Y,
\omega_X\otimes\sL)$. Therefore $h^i(\myR\phi_*\sL^{-1}[d])=0$ for $i\neq 0$. This is
how \eqref{gr-dual} follows: $\myR^j\phi_*\sL^{-1}=\coh j.Y.\sL^{-1}.=0$ for $j\neq
d$.

In the case $\phi$ is birational there is a shift by $d$ on both side so the expected
dual form of this vanishing would be 
\begin{equation}
  \label{eq:GR-dual}
  \myR^j\phi_*\sL^{-1}=0 \text{ for $j\neq 0$.}
\end{equation}

However, this does not always hold. To see this let us consider the simplest
semi-ample line bundle, $\sO_Y$. Then $\myR^i\phi_*\omega_Y=0$ for $i\neq 0$ by
\eqref{GR}, so (\ref{eq:GR-to-dual}) reduces to the following:
\begin{equation}
  \label{eq:GD+GR}
  \myR\sHom_X(\phi_*\omega_Y, \omega_X^\mydot)\qis
  \myR\phi_*\sO_Y[d] 
\end{equation}
Now suppose that $X$ is normal and $\omega_Y\simeq \sO_Y$. Then it follows that if
\eqref{eq:GR-dual} holds for $\sL=\sO_Y$, then $\omega_X^\mydot$ has only one
non-zero cohomology sheaf and hence $X$ is Cohen-Macaulay. In other words, if $X$ is
normal, but not CM and $Y$ has a trivial canonical bundle, then \eqref{eq:GR-dual}
does not hold with $\sL=\sO_Y$ or more generally with $\sL=\phi^*\sM$ for any line
bundle $\sM$ on $X$.

The point is that the dual form of the relative Grauert-Riemenschneider vanishing
theorem is a singularity condition on the target of the morphism in question. Notice
that \eqref{eq:GR-dual} follows from \eqref{eq:GD+GR} for $\sL=\sO_X$ if $X$ is
Cohen-Macaulay and $\phi_*\omega_Y\simeq \omega_X$. It turns out that this defines a
very important class of singularities which is the topic of the next section.

\section{Rational singularities}
\label{sec:rtl}

Rational singularities are among the most important classes of singularities.  The
essence of rational singularities is that their cohomological behavior is very
similar to that of smooth points. For instance, vanishing theorems can be easily
extended to varieties with rational singularities.  Establishing that a certain class
of singularities is rational opens the door to using very powerful tools on varieties
with those singularities.

\begin{skdefini}\label{def:rtl-sing}
  Let $X$ be a normal variety and $\phi :Y \rightarrow X$ a resolution of
  singularities. $X$ is said to have \emph{rational} singularities if
  $\myR^i\phi_*\sO_Y=0$ for all $i>0$, or equivalently if the natural map $\sO_X\to
  \myR\phi_*\sO_Y$ is a quasi-isomorphism.
\end{skdefini}

The notion of \emph{irrational centers} is very closely related. For the definition
and basic properties see \cite{Kovacs11a}.

A very useful property of rational singularities is that they are {Cohen-Macau\-lay}.
In fact, this is part of Kempf's characterization of rational singularities:

\begin{skthm}\cite[p.50]{KKMS73}\label{thm:kempf}
  Let $X$ be a normal variety and $\phi:Y\to X$ a resolution of singularities. Then
  $X$ has rational singularities if and only if $X$ is Cohen-Macau\-lay and
  $\phi_*\omega_{Y}\simeq \omega_X$.
\end{skthm}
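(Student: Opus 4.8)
Set $d=\dim X=\dim Y$. The plan is to read both implications off Grothendieck duality \eqref{thm:GD}, Grauert--Riemenschneider vanishing \eqref{GR}, and the biduality of the dualizing complex; the guiding idea is that ``$X$ has rational singularities'' and ``$X$ is Cohen--Macaulay with $\phi_*\omega_Y\simeq\omega_X$'' are interchanged by the duality functor $D_X(-)\leteq\myR\sHom_X(-,\omega_X^\mydot)$.

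For the forward direction, assume $X$ has rational singularities, so $\myR\phi_*\sO_Y\qis\sO_X$ by \eqref{def:rtl-sing}. I would apply \eqref{thm:GD} with $\sfG=\sO_Y$ to obtain
\begin{equation*}
  \myR\sHom_X(\myR\phi_*\sO_Y,\omega_X^\mydot)\qis\myR\phi_*\myR\sHom_Y(\sO_Y,\omega_Y^\mydot)\qis\myR\phi_*\omega_Y^\mydot .
\end{equation*}
Since $Y$ is smooth, $\omega_Y^\mydot\qis\omega_Y[d]$, and by \eqref{GR} the complex $\myR\phi_*\omega_Y$ collapses to the sheaf $\phi_*\omega_Y$, so the right-hand side is $\phi_*\omega_Y[d]$. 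On the left, rationality gives $\myR\sHom_X(\sO_X,\omega_X^\mydot)\qis\omega_X^\mydot$. Hence $\omega_X^\mydot\qis\phi_*\omega_Y[d]$. As the right-hand side is a sheaf placed in degree $-d$, the complex $\omega_X^\mydot$ has its only nonzero cohomology in degree $-d$, so $X$ is Cohen--Macaulay by \eqref{rem:CM-and-Gor}; and taking $h^{-d}$ yields $\omega_X\simeq\phi_*\omega_Y$, because $\omega_X=h^{-d}(\omega_X^\mydot)$ by definition.

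For the converse, assume $X$ is Cohen--Macaulay and $\phi_*\omega_Y\simeq\omega_X$. I would feed the isomorphism $\phi_*\omega_Y\simeq\omega_X$ into the already-established identity \eqref{eq:GD+GR},
\begin{equation*}
  \myR\sHom_X(\phi_*\omega_Y,\omega_X^\mydot)\qis\myR\phi_*\sO_Y[d],
\end{equation*}
whose left-hand side then becomes $\myR\sHom_X(\omega_X,\omega_X^\mydot)$. The remaining point is to identify this with $\sO_X[d]$: being Cohen--Macaulay gives $\omega_X^\mydot\qis\omega_X[d]$ by \eqref{rem:CM-and-Gor}, so $D_X(\sO_X)\qis\omega_X[d]$, and biduality $D_X\circ D_X\simeq\id$ yields $\sO_X\qis D_X(\omega_X[d])=\myR\sHom_X(\omega_X,\omega_X^\mydot)[-d]$, i.e.\ $\myR\sHom_X(\omega_X,\omega_X^\mydot)\qis\sO_X[d]$. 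Comparing with the displayed identity gives $\myR\phi_*\sO_Y\qis\sO_X$, which forces $\myR^i\phi_*\sO_Y=0$ for all $i>0$, so $X$ has rational singularities by \eqref{def:rtl-sing}.

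The main obstacle I anticipate is purely formal bookkeeping with the dualizing complex: one must invoke the biduality $D_X\circ D_X\simeq\id$ on $D^b_{\mathrm{coh}}(X)$ (valid because $X$, being a variety, admits a dualizing complex), and one should check that the abstract quasi-isomorphisms produced by duality are compatible with the \emph{canonical} maps $\sO_X\to\myR\phi_*\sO_Y$ and $\phi_*\omega_Y\to\omega_X$, rather than being mere abstract isomorphisms. In practice only the cohomology-sheaf consequences are needed—namely $\myR^i\phi_*\sO_Y=0$ for $i>0$ and the concentration of $\omega_X^\mydot$ in a single degree—and these follow directly, so the compatibility is a routine if slightly delicate verification.
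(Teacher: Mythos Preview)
Your argument is correct and is essentially the paper's own proof: both directions are obtained by applying Grothendieck duality \eqref{thm:GD} together with Grauert--Riemenschneider vanishing \eqref{GR} and the fact that $\omega_Y^\mydot\qis\omega_Y[d]$ on the smooth $Y$. The only cosmetic difference is that for the converse the paper writes the chain as $\myR\phi_*\sO_Y\qis\myR\sHom_X(\omega_X^\mydot,\omega_X^\mydot)\qis\sO_X$ directly, whereas you route through \eqref{eq:GD+GR} and invoke biduality to identify $\myR\sHom_X(\omega_X,\omega_X^\mydot)$ with $\sO_X[d]$; under the CM hypothesis $\omega_X^\mydot\qis\omega_X[d]$ these are the same computation.
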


\begin{proof} Let $d=\dim X$.
If $X$ has rational singularities, then 
\begin{multline*}
  \omega_X^\mydot\qis \myR\sHom_X(\sO_X,\omega_X^\mydot)\qis
  \myR\sHom_X(\myR\phi_*\sO_Y, \omega_X^\mydot)\qis \\
  \qis\myR\phi_*\myR\sHom_Y(\sO_Y, \omega_Y[d])\qis \myR\phi_*\omega_Y[d] \qis
  \phi_*\omega_Y[d],
\end{multline*}
which implies that $X$ has to be Cohen-Macau\-lay and $\omega_X\simeq \phi_*\omega_Y$. 

Similarly, if $X$ is Cohen-Macau\-lay and $\omega_X\simeq \phi_*\omega_Y$, then
$\omega_X^\mydot\qis \phi_*\omega_Y[d]$ and so
\begin{multline*}
  \myR\phi_*\sO_Y \qis \myR\phi_*\myR\sHom_Y(\omega_Y[d],\omega_Y^\mydot) \qis
  \myR\sHom_X(\myR\phi_*\omega_Y[d], \omega_X^\mydot) \qis \\ \qis 
  \myR\sHom_X(\phi_*\omega_Y[d], \omega_X^\mydot) \qis \myR\sHom_X(\omega_X^\mydot,
  \omega_X^\mydot)\qis \sO_X
\end{multline*}
shows that $X$ has rational singularities.
\end{proof}

A very important fact is that log terminal singularities are rational:

\begin{skthm}[\cite{Elkik81}]\label{c-elkik}
  Let $X$ be a variety with log terminal singularities. Then $X$ has rational
  singularities.
\end{skthm}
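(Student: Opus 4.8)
The plan is to verify the definition of rational singularities directly (\eqref{def:rtl-sing}): for a resolution $\phi\col\wt X\to X$ I must show $\myR^i\phi_*\sO_{\wt X}=0$ for $i>0$, equivalently that $\sO_X\to\myR\phi_*\sO_{\wt X}$ is a quasi-isomorphism. Alternatively, by Kempf's characterization \eqref{thm:kempf}, it suffices to prove that $X$ is Cohen--Macaulay and that $\phi_*\omega_{\wt X}\simeq\omega_X$. I would fix a good log resolution $\phi$ whose exceptional divisor $E=\union E_i$ is snc and for which $\phi^*K_X$ has snc support, and record the discrepancy equation $K_{\wt X}\sim_{\bQ}\phi^*K_X+\sum a_iE_i$ with $a_i>-1$ (the log terminal hypothesis, cf.\ \eqref{def:lc-sing}).

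The pushforward identity is the easy half and follows exactly as in \eqref{lem:can-sings}. A local section of $\omega_X$ is a rational function $g$ with $\opdiv_X(g)+K_X\geq 0$, whose pullback satisfies $\opdiv_{\wt X}(g)+K_{\wt X}\geq\sum a_iE_i$; since the left-hand side is an \emph{integral} divisor and $a_i>-1$, its coefficient along each $E_i$ is automatically $\geq\rup{a_i}\geq 0$, so $\opdiv_{\wt X}(g)+K_{\wt X}\geq 0$ and the reverse containment is clear, giving $\phi_*\omega_{\wt X}\simeq\omega_X$. The same integrality bound shows the coefficient is $\geq\rup{a_i}$, i.e.\ $\opdiv_{\wt X}(g)+K_{\wt X}\geq F$ for $F\leteq\rup{\sum a_iE_i}$, so in fact $\phi_*\omega_{\wt X}(-F)\simeq\omega_X$ as well, a refinement I will use below.

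The crux is the vanishing, and here the fractional discrepancies $a_i\in(-1,0)$ force me to use more than Grauert--Riemenschneider \eqref{GR}: I would invoke the relative Kawamata--Viehweg vanishing theorem, the logarithmic $\bQ$-divisor strengthening of \eqref{gr}--\eqref{GR}. The divisor $F=\rup{\sum a_iE_i}$ is effective and $\phi$-exceptional (because $a_i>-1$), so $\phi_*\sO_{\wt X}(F)\simeq\sO_X$ by normality of $X$. The key identity is $F=K_{\wt X}+\rup{-\phi^*K_X}$, which holds since $\rup{-\phi^*K_X}=\rup{-K_{\wt X}+\sum a_iE_i}=-K_{\wt X}+\rup{\sum a_iE_i}$. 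As $-\phi^*K_X$ is $\phi$-numerically trivial (being pulled back from $X$), it is $\phi$-nef and, $\phi$ being birational, $\phi$-big; Kawamata--Viehweg therefore yields $\myR^i\phi_*\sO_{\wt X}(F)=0$ for $i>0$, so $\myR\phi_*\sO_{\wt X}(F)\qis\sO_X$.

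The final and genuinely hardest step is to descend from the twist $\sO_{\wt X}(F)$ to $\sO_{\wt X}$ itself. There are two routes. The hands-on one is a dévissage along the snc divisor $F$: using the restriction sequences for the components of $F$ together with the snc and negativity structure of the exceptional locus, one peels off the components one at a time and transports $\myR^i\phi_*\sO_{\wt X}(F)=0$ down to $\myR^i\phi_*\sO_{\wt X}=0$. The cleaner packaging is dual: applying Grothendieck duality \eqref{thm:GD} to $\myR\phi_*\sO_{\wt X}(F)\qis\sO_X$ gives $\omega_X^\mydot\qis\myR\phi_*\omega_{\wt X}(-F)[d]$, and combined with $\phi_*\omega_{\wt X}(-F)\simeq\omega_X$ this reduces the Cohen--Macaulayness of $X$ to the vanishing of $\myR^j\phi_*\omega_{\wt X}(-F)$ for $j>0$; equivalently, to showing that the split injection $\sO_X\to\myR\phi_*\sO_{\wt X}$ induced by $\sO_{\wt X}\hookrightarrow\sO_{\wt X}(F)$ is actually a quasi-isomorphism. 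Once that is in hand, \eqref{thm:kempf} finishes the argument. I expect this transfer from the convenient twisted sheaf back to the structure sheaf -- that is, the Cohen--Macaulay property -- to be the main obstacle, precisely because it is invisible to Grauert--Riemenschneider alone and carries the essential content of Elkik's theorem.
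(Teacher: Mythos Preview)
Your approach is correct and complete, but it takes a different route from the paper's, and the step you flag as the ``main obstacle'' is in fact no obstacle at all: you have already verified the hypothesis of the paper's central criterion without noticing it.

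The paper's proof (given as \eqref{thm:elkik-with-proof}) first passes to the index-$1$ cover $\pi:\wt X\to X$; since $\sO_X\to\pi_*\sO_{\wt X}$ splits by construction of the cyclic cover, \eqref{cor:finite-rtl} reduces the problem to the case where $\omega_X$ is a line bundle and $X$ is canonical. In that Gorenstein situation the honest map $\phi^*\omega_X\hookrightarrow\omega_Y$ exists, its push-forward is a quasi-isomorphism by \eqref{lem:can-sings} and Grauert--Riemenschneider \eqref{GR}, and applying $\myR\sHom_Y(\blank,\omega_Y)$ together with \eqref{thm:adjoint} manufactures a left inverse $\varrho'\col\myR\phi_*\sO_Y\to\sO_X$; the Splitting Theorem~I \eqref{thm:rtl-crit} then concludes. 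Kawamata--Viehweg is never used.

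Your route keeps the fractional discrepancies and uses Kawamata--Viehweg for the round-up $F=\rup{\sum a_iE_i}$ to obtain $\myR\phi_*\sO_{\wt X}(F)\qis\sO_X$. At that moment you are already finished: the inclusion $\sO_{\wt X}\hookrightarrow\sO_{\wt X}(F)$ pushes forward to a map $\varrho'\col\myR\phi_*\sO_{\wt X}\to\sO_X$, and the composite $\sO_X\to\myR\phi_*\sO_{\wt X}\to\sO_X$ is the identity in degree~$0$ (both ends are the sheaf $\sO_X$, since $X$ is normal and $F$ is effective $\phi$-exceptional), hence a quasi-isomorphism. This is precisely the left inverse demanded by \eqref{cor:split-rtl}, and that corollary hands you rationality---Cohen--Macaulayness included---with no d\'evissage and no further duality argument. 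The entire content of the splitting principle is that the ``split injection'' you built \emph{forces} the quasi-isomorphism; you do not have to prove it separately.

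The trade-off between the two arguments: yours is more direct but imports Kawamata--Viehweg as a black box; the paper's is lighter on vanishing (only \eqref{GR}) but pays with the index-$1$ cover reduction. Both funnel into \eqref{thm:rtl-crit}, which is the real engine here.
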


This is actually an easy consequence of a characterization theorem that will be
stated later in \eqref{thm:rtl-crit}. The proof will be given in
\eqref{thm:elkik-with-proof} after the necessary notation is introduced in
\S\ref{sec:splitting-principle}.

In particular, canonical singularities are rational and as a corollary one obtains
that the total space of a stable family should have rational singularities.

Now we may repeat the investigation that helped us figure out what kind of
singularities stable varieties should have. Previously we figured that if the total
space has canonical singularities then the fibers should have semi log canonical
singularities. Next we would like to see what it means for the fibers that the total
space of the family has rational singularities.

So, let $f:X\to B$ be a family of reduced varieties such that $B$ is a smooth curve
and $X$ has rational singularities. Let $b\in B$ a fixed point and let $\phi: Y\to X$
be a resolution of singularities such that $\supp(\exc(\phi)\cup \phi^{-1}X_b)$ is a
simple normal crossing divisor. Observe that by assumption and construction
$X_b=f^*b$ is a Cartier divisor and $Y_b=\phi^*X_b$.  Following the spirit of our
assumption on stable families \eqref{obs:total-space} assume that $Y_b$ is reduced,
that is, $Y_b=\phi^{-1}X_b$.  One also has the following commutative diagram of
distinguished triangles:
$$
\xymatrix{%
  \sO_X(-X_b) \ar[r] \ar[d]_{\alpha_1} & \sO_X \ar[r]\ar[d]_{\alpha_2} & \sO_{X_b}
  \ar[r]^-{+1}\ar[d]_{\alpha_3} & \
  \\
  \myR\phi_*\sO_Y(-Y_b) \ar[r] & \myR\phi_*\sO_Y \ar[r] & \myR\phi_*\sO_{Y_b}
  \ar[r]^-{+1} & \ }
$$
Notice that if the horizontal morphisms in this diagram are the usual natural
morphisms, then $\alpha_3$ is uniquely determined by $\alpha_1$ and $\alpha_2$ by
\eqref{lem:h-nought}.

As $X$ has rational singularities, $$\alpha_2: \sO_X\to \myR\phi_*\sO_Y$$ is a
quasi-isomorphism. Since $\sO_Y(-Y_b)\simeq \phi^*\sO_X(-X_b)$ it follows by the
projection formula that
$$\alpha_1=\alpha_2\otimes\id_{\sO_X(-X_b)}: \sO_X(-X_b) \to \myR\phi_*\sO_Y(-Y_b)
\qis \myR\phi_*\sO_Y\otimes\sO_X(-X_b)$$ is also a quasi-isomorphism.  Therefore the
triangulated category version of the $9$-lemma (see \ref{app:B}) implies that
$$\alpha_3: \sO_{X_b}\to \myR\phi_*\sO_{Y_b}$$ is also a quasi-isomorphism. Note that
this does not mean that $X_b$ has rational singularities as $Y_b$ is not a resolution
of singularities, it is in general not even birational to $X_b$. However, it
definitely means that these singularities are not too far from rational
singularities.

We found in \S\ref{sec:stable-singularities} that one cannot expect the fibers to
have the same type of singularities as the total space, just as one cannot expect all
hyperplane sections of varieties in general to have the same type of singularities as
the original varieties. Similarly here one cannot expect to have the members of the
family have rational singularities. However, just as in
\S\ref{sec:stable-singularities}, one finds that the singularities of the fibers are
not too much worse. These are called Du~Bois singularities and we will get acquainted
with them in the next few sections. Notice that the condition we obtained here is
almost identical to the one given by Schwede's criterion in
\eqref{EasyDuBoisCriterion}.

\section{DB singularities}\label{sec:db-singularities}

Du~Bois singularities are probably harder to appreciate than rational singualrites at
first, but they are equally important. Their main importance comes from two facts:
They are not too far from rational singularities, that is, they share many of their
properties, but the class of Du~Bois singularities is more inclusive than that of
rational singularities.  For instance, log canonical singularities are Du~Bois, but
not necessarily rational. 

Du~Bois singularities are defined via Deligne's Hodge theory and so their strong
connection to the singularities of the minimal model program might seem unexpected.
Nevertheless, they play a very important role. We will need a little preparation
before we can define these singularities, but first I would like to mention a few
facts to underline their importance. 

The concept of Du Bois singularities, abbreviated as \emph{DB}, was introduced by
Steenbrink in \cite{Steenbrink83} as a weakening of rationality. The following
statement is a direct consequence of the definition and this is the most important
property of a DB singularity:

\begin{skthm}\label{thm:hodge-for-db}
  Let $X$ be a proper scheme of finite type over $\bC$.  If $X$ has only DB
  singularities, then the natural map
  $$
  H^i(X^{\rm an},\bC)\to H^i(X^{\rm an},\sO_{X^{\rm an}})\cong  H^i(X,\sO_{X})
  $$
  is surjective for all $i$.
\end{skthm}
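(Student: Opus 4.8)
The plan is to read everything off the filtered Du~Bois complex $\FullDuBois{X}$ of Deligne--Du~Bois, whose formal properties encode precisely the Hodge-theoretic input required. I would first recall three features of this complex for a separated scheme of finite type over $\bC$. First, there is a natural augmentation $\bC_{X^{\rm an}} \to \FullDuBois{X}$ that is an isomorphism on hypercohomology, so $\bH^i(X, \FullDuBois{X}) \cong H^i(X^{\rm an}, \bC)$. Second, $\FullDuBois{X}$ carries a decreasing (Hodge) filtration with $F^0\FullDuBois{X}=\FullDuBois{X}$ and zeroth graded piece $\Gr^0_F\FullDuBois{X}\qis \DuBois{X}$, and the composite $\bC_{X^{\rm an}}\to\FullDuBois{X}\to\DuBois{X}$ agrees with the natural map $\bC_{X^{\rm an}}\to\sO_{X^{\rm an}}$ followed by the canonical morphism $\sO_X\to\DuBois{X}$. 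Third -- and this is the deep analytic input supplied by Deligne's mixed Hodge theory -- when $X$ is \emph{proper} the associated Hodge--de~Rham spectral sequence
$$E_1^{p,q}=\bH^q(X,\underline\Omega^p_X)\Longrightarrow \bH^{p+q}(X,\FullDuBois{X})\cong H^{p+q}(X^{\rm an},\bC)$$
degenerates at $E_1$.

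Granting these, the argument is formal. Degeneration at $E_1$ gives $E_1^{0,i}=E_\infty^{0,i}=\Gr^0_F H^i(X^{\rm an},\bC)$, and since the filtration is exhaustive with $F^0H^i=H^i(X^{\rm an},\bC)$, the projection onto the top graded piece
$$H^i(X^{\rm an},\bC)\longrightarrow \Gr^0_F H^i(X^{\rm an},\bC)\cong \bH^i(X,\DuBois{X})$$
is surjective. By the factorization recorded above, this surjection is exactly the map on hypercohomology induced by $\FullDuBois{X}\to\DuBois{X}$, i.e.\ the edge homomorphism.

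Next I would invoke the hypothesis. By the very definition of Du~Bois singularities, the canonical morphism $\sO_X\to\DuBois{X}$ is a quasi-isomorphism, whence $\bH^i(X,\DuBois{X})\cong H^i(X,\sO_X)$. As $X$ is proper over $\bC$, GAGA identifies $H^i(X,\sO_X)\cong H^i(X^{\rm an},\sO_{X^{\rm an}})$. Chasing the factorization $\bC_{X^{\rm an}}\to\sO_{X^{\rm an}}\to\DuBois{X}$ through these isomorphisms identifies the surjection of the previous paragraph with the natural map $H^i(X^{\rm an},\bC)\to H^i(X^{\rm an},\sO_{X^{\rm an}})$ of the statement; surjectivity for all $i$ follows.

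The only genuinely substantial ingredient is the $E_1$-degeneration of the Hodge--de~Rham spectral sequence of $\FullDuBois{X}$ for proper $X$; everything else is bookkeeping with the filtered complex together with GAGA. I expect this degeneration to be the one hard point, but it is built into the construction of the Du~Bois complex via cohomological descent and the theory of mixed Hodge structures, which is exactly why the statement can be regarded as a direct consequence of the definition.
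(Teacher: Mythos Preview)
Your proof is correct and follows exactly the approach the paper intends: the paper presents this theorem as a direct consequence of the definition, and later spells out the same argument you give---namely, the $E_1$-degeneration of the spectral sequence in (\ref{defDB}.\ref{item:Hodge}) combined with the quasi-isomorphism $\sO_X\to\DuBois{X}$ from the definition of DB singularities yields the surjectivity. Your write-up is in fact more detailed than the paper's, which only sketches the reasoning in the paragraph preceding Definition~\ref{def:db-sing}.
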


In fact, this essentially characterizes Du~Bois singularities shown by the next
theorem. For details see \cite{Kovacs11d}.

\begin{skthm}\label{thm:naive-db}\cite{Kovacs11d}
  Let $X$ be a projective variety over $\bC$.  Then $X$ has only Du~Bois
  singularities if and only if for any $L\subseteq X$ general (global) complete
  intersection subvariety
  $$
  \dim_{\bC} H^i\bigl(L, \sO_L\bigr) \leq \dim_{\bC} Gr^0_FH^{i}\bigl(L, \bC\bigr),
  $$
  where $Gr^0_FH^{i}\bigl(L, \bC\bigr)$ is the graded quotient associated to
  Deligne's Hodge filtration on $H^{i}\bigl(L, \bC\bigr)$.
\end{skthm}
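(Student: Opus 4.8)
The plan is to recast the stated inequality as the vanishing of a single object in the derived category and then to detect that vanishing by repeatedly cutting with general hyperplanes. Recall that $X$ is Du~Bois exactly when the canonical map $\sO_X\to\DuBois{X}$ is a quasi-isomorphism; set $\cC_X\leteq\mathrm{cone}\bigl(\sO_X\to\DuBois{X}\bigr)$, a bounded complex with coherent cohomology, so that $X$ is Du~Bois if and only if $\cC_X\qis 0$, and similarly for any subvariety $L$. The Hodge-theoretic input I would use is the degeneration theorem of Du~Bois, which for a proper scheme $L$ identifies $Gr^0_F H^i(L,\bC)\simeq\bH^i(L,\DuBois{L})$. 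The first observation is that the inequality in the statement always holds in the opposite direction, so that the hypothesis is equivalent to an equality: the maps $\bC_L\to\sO_L\to\DuBois{L}$ induce on $i$-th (hyper)cohomology a factorization of the natural projection $H^i(L,\bC)\twoheadrightarrow Gr^0_F H^i(L,\bC)\simeq\bH^i(L,\DuBois{L})$ through $H^i(L,\sO_L)$, and this projection is surjective because $F^0H^i(L,\bC)=H^i(L,\bC)$ for the proper variety $L$. Hence $H^i(L,\sO_L)\to\bH^i(L,\DuBois{L})$ is surjective for every $i$, giving $\dim H^i(L,\sO_L)\geq\dim Gr^0_F H^i(L,\bC)$ unconditionally. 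Therefore the hypothesis forces equality for all $i$, which is exactly the statement that $\bH^i(L,\cC_L)=0$ for all $i$ and all general complete intersections $L$ (including $L=X$).

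The technical heart of the proof, and the step I expect to be the main obstacle, is a restriction theorem for the Du~Bois complex: for a general member $H$ of a very ample linear system there is a distinguished triangle $\DuBois{X}\otimes\sO_X(-H)\to\DuBois{X}\to\DuBois{H}\xrightarrow{+1}$. Comparing this with the tautological triangle $\sO_X(-H)\to\sO_X\to\sO_H\xrightarrow{+1}$ through the augmentations $\sO\to\DuBois{}$, the octahedral axiom yields a triangle of cones
$$
\cC_X\otimes\sO_X(-H)\longrightarrow\cC_X\longrightarrow\cC_H\xrightarrow{\,+1\,}.
$$
Proving the restriction triangle amounts to controlling how a cubical hyperresolution of $X$ meets a general $H$; it is precisely the genericity of $H$ (transversality to all strata of the resolution) that guarantees the restricted resolution computes $\DuBois{H}$. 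This is the one genuinely Hodge-theoretic ingredient, and everything downstream is formal.

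Granting the triangle of cones, the reverse direction follows by induction on $\dim X$, the case $\dim X=0$ being trivial since a reduced zero-dimensional scheme is smooth, hence Du~Bois. A general complete intersection inside a general $H$ is again a general complete intersection in $X$, so $H$ inherits the hypothesis; by induction $\cC_H\qis 0$, and the triangle collapses to $\cC_X\qis\cC_X\otimes\sO_X(-H)$, whence $\cC_X\qis\cC_X\otimes\sO_X(mH)$ for all $m$. Since $\bH^i(X,\cC_X)=0$ for every $i$ (the case $L=X$), this forces $\bH^i\bigl(X,\cC_X\otimes\sO_X(mH)\bigr)=0$ for all $i$ and all $m$. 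If $\cC_X\not\qis 0$, choose $j_0$ maximal with $h^{j_0}(\cC_X)\neq 0$; Serre vanishing then gives $\bH^{j_0}\bigl(X,\cC_X\otimes\sO_X(mH)\bigr)\simeq H^0\bigl(X,h^{j_0}(\cC_X)\otimes\sO_X(mH)\bigr)\neq 0$ for $m\gg 0$, a contradiction. Hence $\cC_X\qis 0$ and $X$ is Du~Bois.

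Conversely, if $X$ is Du~Bois then $\cC_X\qis 0$, so the triangle of cones immediately gives $\cC_H\qis 0$ for general $H$; iterating, every general complete intersection $L$ satisfies $\cC_L\qis 0$, i.e.\ is Du~Bois, so the map $\bH^i(L,\sO_L)\to\bH^i(L,\DuBois{L})$ is an isomorphism and equality — in particular the asserted inequality — holds for all such $L$. Thus the whole theorem reduces to the always-valid surjectivity of the first paragraph together with the restriction triangle, and the genuine difficulty is concentrated in establishing that triangle for a general hyperplane section.
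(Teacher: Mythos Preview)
The paper does not prove this theorem here; it only states it with a reference to \cite{Kovacs11d}, so there is no in-paper proof to compare against.

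That said, your strategy is correct and is essentially the expected one. The reformulation via the cone $\cC_X$ of $\sO_X\to\DuBois{X}$, the observation that $H^i(L,\sO_L)\to\bH^i(L,\DuBois{L})\simeq Gr^0_FH^i(L,\bC)$ is always surjective (so the stated inequality is secretly an equality), the inductive reduction via a general hyperplane section, and the Serre-vanishing endgame are all sound. The 9-lemma you invoke to produce the triangle of cones is exactly Theorem~\ref{thm:nine-lemma} of the present paper, and the uniqueness clause there (via Lemma~\ref{lem:h-nought}, using $h^j(\DuBois{H})=0$ for $j<0$ from (\ref{defDB}.5)) guarantees that the induced map $\sO_H\to\DuBois{H}$ is the natural one, so its cone really is $\cC_H$. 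The only substantive ingredient you do not prove is the restriction triangle $\DuBois{X}\otimes\sO_X(-H)\to\DuBois{X}\to\DuBois{H}\xrightarrow{+1}$ for a general very ample $H$; this is known and, as you correctly identify, comes down to Bertini-type transversality of $H$ to the strata of a hyperresolution. Granting that, your argument is complete.
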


Using \cite[Lemme 1]{MR0376678}, \eqref{thm:hodge-for-db} implies the following:

\begin{skcor}
  \label{cor:coh-inv}
  Let ${{f}}:X\to B$ be a proper, flat morphism of complex varieties with $B$
  connected.  Assume that $X_b$ has only DB singularities for all $b\in B$. Then
  $h^i(X_b, \sO_{X_b})$ is independent of $b\in B$ for all $i$.
\end{skcor}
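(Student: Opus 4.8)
The plan is to prove that the function $b\mapsto h^i(X_b,\sO_{X_b})$ is simultaneously upper and lower semicontinuous on $B$; being integer valued on a connected base, it is then locally constant and hence constant. Upper semicontinuity is automatic and uses nothing about the fibres beyond properness and flatness of $f$: it is the semicontinuity theorem for the coherent higher direct images $\myR^i f_*\sO_X$ (cf.\ \cite[III.12.8]{Hartshorne77}). Flatness also makes the coherent Euler characteristic $\sum_i(-1)^ih^i(X_b,\sO_{X_b})=\chi(X_b,\sO_{X_b})$ locally constant, but this is not enough on its own, since upper semicontinuity still allows the individual $h^i$ to jump upward in several degrees at once without disturbing the alternating sum. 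So the whole point is the \emph{reverse} inequality, i.e.\ lower semicontinuity.

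This is exactly where the Du~Bois hypothesis on the fibres is used. By \eqref{thm:hodge-for-db}, and more precisely by the Hodge-theoretic characterization of Du~Bois singularities recorded in \eqref{thm:naive-db}, the Du~Bois condition on $X_b$ promotes the natural surjection $H^i(X_b,\bC)\twoheadrightarrow H^i(X_b,\sO_{X_b})$ to an isomorphism
\[
  H^i(X_b,\sO_{X_b})\;\cong\;Gr^0_FH^i(X_b,\bC),
\]
where $H^i(X_b,\bC)$ denotes the singular cohomology of the associated analytic space and $Gr^0_F$ is the lowest graded piece of Deligne's Hodge filtration. This identifies the coherent invariant $h^i(X_b,\sO_{X_b})$ with the purely Hodge-theoretic quantity $\dim_{\bC}Gr^0_FH^i(X_b,\bC)$, and thereby turns the question into one about how the bottom piece of the Hodge filtration varies in the family $f$.

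It remains to see that $b\mapsto\dim_{\bC}Gr^0_FH^i(X_b,\bC)$ is lower semicontinuous, that is, that $Gr^0_F$ of the fibre cohomology cannot grow under specialization. This is precisely the content of \cite[Lemme 1]{MR0376678}, applied to the proper morphism $f$: it controls the behaviour of the lowest Hodge piece along $B$ and rules out any upward jump of $h^i(X_b,\sO_{X_b})$ at special points once the fibres are Du~Bois. Combining this lower semicontinuity with the upper semicontinuity above yields local constancy, hence the corollary. I expect this Hodge-theoretic semicontinuity to be the genuine obstacle: everything else — the reduction to a two-sided semicontinuity statement and its translation into Hodge theory via \eqref{thm:hodge-for-db} — is formal, whereas the indispensable role of the Du~Bois hypothesis is concentrated entirely in this final comparison.
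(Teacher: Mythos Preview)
Your proposal is correct and follows essentially the same approach as the paper, whose entire proof is the one line ``Using \cite[Lemme~1]{MR0376678}, \eqref{thm:hodge-for-db} implies the following''---exactly your combination of the Du~Bois surjectivity with the Du~Bois--Jarraud lemma. One minor point: \cite[Lemme~1]{MR0376678} is stated directly in the form ``if $H^i(X_b,\bC)\to H^i(X_b,\sO_{X_b})$ is surjective for all $b$, then $h^i(X_b,\sO_{X_b})$ is constant'', so your passage through the identification with $Gr^0_F H^i(X_b,\bC)$ and the citation of \eqref{thm:naive-db} is an (accurate) unpacking of the mechanism behind that lemma rather than its literal content; the paper simply feeds the surjectivity from \eqref{thm:hodge-for-db} into the lemma as a black box.
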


This will be important later.

The starting point of the precise definition is Du~Bois's construction, following
Deligne's ideas, of the generalized de~Rham complex, which is called the
\emph{Deligne-Du~Bois complex}.  Recall, that if $X$ is a smooth complex algebraic
variety of dimension $n$, then the sheaves of differential $p$-forms with the usual
exterior differentiation give a resolution of the constant sheaf $\bC_X$. I.e., one
has a complex of sheaves,
$$
\xymatrix{%
\sO_X \ar[r]^{d} & \Omega_X^1 \ar[r]^{d} & \Omega_X^2 \ar[r]^{d} & \Omega_X^3
\ar[r]^{d} & \dots  \ar[r]^{d} & \Omega_X^n\simeq \omega_X,
}
$$
which is quasi-isomorphic to the constant sheaf $\bC_X$ via the natural map $\bC_X\to
\sO_X$ given by considering constants as holomorphic functions on $X$. Recall that
this complex \emph{is not} a complex of quasi-coherent sheaves. The sheaves in the
complex are quasi-coherent, but the maps between them are not $\sO_X$-module
morphisms. Notice however that this is actually not a shortcoming; as $\bC_X$ is not
a quasi-coherent sheaf, one cannot expect a resolution of it in the category of
quasi-coherent sheaves.

The Deligne-Du~Bois complex is a generalization of the de~Rham complex to singular
varieties.  It is a complex of sheaves on $X$ that is quasi-isomorphic to the
constant sheaf, $\bC_X$. The terms of this complex are harder to describe but its
properties, especially cohomological properties are very similar to the de~Rham
complex of smooth varieties. In fact, for a smooth variety the Deligne-Du~Bois
complex is quasi-isomorphic to the de~Rham complex, so it is indeed a direct
generalization.

The construction of this complex, $\FullDuBois{X}$, is based on simplicial
resolutions. The reader interested in the details is referred to the original article
\cite{DuBois81}.  Note also that a simplified construction was later obtained in
\cite{Carlson85} and \cite{GNPP88} via the general theory of polyhedral and cubic
resolutions.  An easily accessible introduction can be found in \cite{Steenbrink85}.
Other useful references are the recent book \cite{PetersSteenbrinkBook} and the
survey \cite{Kovacs-Schwede11}. I will actually not use these resolutions here. They
are needed for the construction, but if one is willing to believe the listed
properties (which follow in a rather straightforward way from the construction) then
one should be able to follow the material presented here.

Recently Schwede found a simpler alternative construction of (part of) the
Deligne-Du~Bois complex that does not need a simplicial resolution
\eqref{EasyDuBoisCriterion}.  This allows one to define Du~Bois singularities
\eqref{def:db-sing} without needing simplicial resolutions and it is quite useful in
applications.  For applications of the Deligne-Du~Bois complex and Du~Bois
singularities other than the ones listed here see \cite{SteenbrinkMixed},
\cite[Chapter 12]{Kollar95s}, \cite{Kovacs99,Kovacs00c}.

The word ``hyperresolution'' will refer to either a simplicial, polyhedral, or cubic
resolution. Formally, the construction of $\FullDuBois{X}$ is essentially the same
regardless the type of resolution used and no specific aspects of either types will
be used.

The following definition is included to make sense of the statements of some of the
forthcoming theorems. It can be safely ignored if the reader is not interested in the
detailed properties of the Deligne-Du~Bois complex and is willing to accept that it
is a very close analog of the de~Rham complex of smooth varieties.

\begin{skthm}[{\cite[6.3, 6.5]{DuBois81}}]\label{defDB}
  Let $X$ be a complex scheme of finite type.
  Then there exists a unique object $\Om_X^\mydot \in \Ob D_{\rm filt}(X)$ such that
  using the notation
  $$
  \Om_X^p\leteq Gr^{p}_{\rm filt}\, \Om_X^\mydot[p],
  $$
  it satisfies the following properties
  \begin{enumerate}
  \item 
    \begin{equation*}
      \Om_X^\mydot \qis \bC_{X}.
    \end{equation*}
    \label{resolution}
  \item $\Om$ is functorial, i.e., if $\phi \col Y\to X$ is a morphism of complex
    schemes of finite type, then there exists a natural map $\phi^{*}$ of filtered
    complexes
    $$
    \phi^{*}\col \Om_X^\mydot \to \myR\phi_{*}\Om_Y^{\mydot}.
    $$
    Furthermore, $\Om_X^\mydot \in \Ob \left(D^{b}_{\rm filt, coh}(X)\right)$ and if
    $\phi$ is proper, then $\phi^{*}$ is a morphism in $D^{b}_{\rm filt, coh}(X)$.
    \label{functorial}
  
  \item Let $U \subseteq X$ be an open subscheme of $X$. Then
    $$
    \Om_X^\mydot\resto U 
    \qis\underline{\Omega}^{\,\mydot}_U.
    $$
    
  \item If $X$ is proper, then there exists a spectral sequence degenerating at $E_1$
    and abutting to the singular cohomology of $X$:
    $$
    E_1^{pq}={\bH}^q \left(X, \Om_X^p\right) \Rightarrow H^{p+q}(X, \bC).
    $$\label{item:Hodge}
  
  \item If\/ $\varepsilon_\mydot\col X_\mydot\to X$ is a hyperresolution, then
    $$
    \Om_X^\mydot\qis R{\varepsilon_\mydot}_* \Omega^\mydot_{X_\mydot}.
    $$
    In particular, $h^i\left(\Om_X^p\right)=0$ for $i<0$.
    
  \item There exists a natural map, $\sO_{X}\to \Om_X^0$, compatible with
    {\rm (\ref{defDB}.\ref{functorial})}.
    \label{item:dR-to-DB}

  \item If\/ $X$ is smooth, then
    $$
    \Om_X^\mydot\qis\Omega^\mydot_X.
    $$
    In particular,
    $$
    \Om_X^p\qis\Omega^p_X.
    $$
    
  \item If\/ $\phi\col Y\to X$ is a resolution of singularities, then
    $$
    \Om_X^{\dim X} \qis \myR\phi_*\omega_Y.
    $$
  \end{enumerate}
\end{skthm}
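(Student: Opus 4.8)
The plan is to extract the top graded piece of the Deligne--Du~Bois complex from its description via a hyperresolution, and then to match it against $\myR\phi_*\omega_Y$ using relative Grauert--Riemenschneider vanishing. Write $d=\dim X$ and fix a hyperresolution $\varepsilon_\mydot\col X_\mydot\to X$ (say a cubic one), so that each component $X_\alpha$ is smooth and $\varepsilon_\mydot$ is an augmentation which is an isomorphism over the smooth locus of $X$. By property~(5) of \eqref{defDB} one has the filtered quasi-isomorphism $\Om_X^\mydot\qis \myR\varepsilon_{\mydot *}\Omega^\mydot_{X_\mydot}$, where $\Omega^\mydot_{X_\mydot}$ carries the stupid (naive) filtration. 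Passing to the $d$-th graded piece, with the shift built into the definition $\Om_X^p=Gr^p_{\rm filt}\Om_X^\mydot[p]$, therefore gives
$$
\Om_X^{\dim X}\qis \myR\varepsilon_{\mydot *}\Omega^d_{X_\mydot},
$$
in the same way that property~(7) yields $\Om_X^p\qis\Omega^p_X$ when $X$ is already smooth.

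The key step is a dimension count. In a cubic hyperresolution the initial component is a genuine resolution $\varepsilon_0\col X_0\to X$ with $\dim X_0=d$, while all remaining components arise by recursively resolving the discriminant $D\subseteq X$ of $\varepsilon_0$ and its preimage $E=\varepsilon_0^{-1}(D)\subseteq X_0$. Since $\dim D<d$ and $\dim E\le d-1<d$, every component $X_\alpha$ other than $X_0$ is smooth of dimension strictly less than $d$. Consequently $\Omega^d_{X_\alpha}=\wedge^d\Omega^1_{X_\alpha}=0$ for all $\alpha\neq 0$, whereas $\Omega^d_{X_0}=\omega_{X_0}$. Thus the cubical object $\Omega^d_{X_\mydot}$ is concentrated at the single vertex $X_0$, and its derived pushforward collapses to that one term:
$$
\myR\varepsilon_{\mydot *}\Omega^d_{X_\mydot}\qis \myR\varepsilon_{0 *}\omega_{X_0}.
$$

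It remains to replace the particular resolution $\varepsilon_0$ by the given $\phi\col Y\to X$, i.e.\ to show that $\myR\psi_*\omega_{Z}$ is independent of the resolution $\psi\col Z\to X$. Since any two resolutions are dominated by a third, it suffices to treat a further resolution $\mu\col Z'\to Z$ of smooth varieties: here $\mu_*\omega_{Z'}\simeq\omega_Z$ because $\mu$ is birational, and $\myR^i\mu_*\omega_{Z'}=0$ for $i>0$ by relative Grauert--Riemenschneider vanishing \eqref{GR} applied with $\sL=\sO_{Z'}$, so that $\myR\mu_*\omega_{Z'}\qis\omega_Z$ and hence $\myR(\psi\circ\mu)_*\omega_{Z'}\qis\myR\psi_*\omega_Z$. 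This independence gives $\myR\varepsilon_{0 *}\omega_{X_0}\qis\myR\phi_*\omega_Y$, and combining the three displays completes the proof. The main obstacle is the dimension count of the middle paragraph: it is precisely the internal structure of a hyperresolution --- that all but the initial component live over the lower-dimensional discriminant and exceptional loci --- which forces every top-degree form sheaf except $\omega_{X_0}$ to vanish. Everything else is formal manipulation of graded pieces together with the cited vanishing theorem.
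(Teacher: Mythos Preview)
The paper does not prove this theorem: it is stated as a list of properties of the Deligne--Du~Bois complex, with a citation to Du~Bois's original article \cite[6.3,~6.5]{DuBois81}, and no argument is supplied in the paper itself. So there is no ``paper's proof'' to compare against.

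That said, your argument for property~(8), taking properties~(5) and~(7) as given, is correct and is essentially the standard derivation. The two substantive points are exactly the ones you identify: the dimension count in a cubic hyperresolution (only the initial resolution $X_0$ has dimension $d$, since the remaining components are built over the discriminant and exceptional loci, which are of strictly smaller dimension --- this is the content of the dimension bound in \cite[I.2.15]{GNPP88}), and the independence of $\myR\psi_*\omega_Z$ from the choice of resolution, which follows from Grauert--Riemenschneider \eqref{GR} as you say. One small remark: you should note that the filtered quasi-isomorphism in property~(5) is what lets you pass to graded pieces and still get $\Om_X^d\qis \myR\varepsilon_{\mydot *}\Omega^d_{X_\mydot}$; you use this implicitly but it is worth making explicit, since an unfiltered quasi-isomorphism would not suffice.
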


It turns out that the Deligne-Du~Bois complex behaves very much like the de~Rham
complex for smooth varieties. Observe that (\ref{defDB}.\ref{item:Hodge}) says that
the Hodge-to-de~Rham spectral sequence works for singular varieties if one uses the
Deligne-Du~Bois complex in place of the de~Rham complex. This has far reaching
consequences and if the associated graded pieces $\Om_X^p$ turn out to be computable,
then this single property leads to many applications.

The natural map $\sO_{X}\to \Om^0_X$ given by (\ref{defDB}.\ref{item:dR-to-DB}) may
be considered as an invariant of the singularites of $X$. Clearly, if $X$ is smooth,
then it is a quasi-isomorphism, but it may be a quasi-isomorphism even if $X$ is not
smooth. In fact, we are interested in situations when this map is a
quasi-isomorphism.  When $X$ is proper over $\bC$, such a quasi-isomorphism implies
that the natural map
\begin{equation*}
  H^i(X^{\text{an}}, \bC) \rightarrow H^i(X, \sO_{X}) = \bH^i(X, \DuBois{X})
\end{equation*}
is surjective because of the degeneration at $E_1$ of the spectral sequence in
(\ref{defDB}.\ref{item:Hodge}) (cf.\ \eqref{thm:hodge-for-db}).  Notice that this
condition is crucial for proving Kodaira-type vanishing theorems cf.\ \cite[\S
9]{Kollar95s}, \cite[3.H]{Hacon-Kovacs10}.

Following Du~Bois, Steenbrink was the first to study this condition and he christened
this property after Du~Bois. It should be noted that many of the ideas that play
important roles in this theory originated from Deligne. Unfortunately the now
standard terminology does not reflect this.

\begin{skdefini}\label{def:db-sing}
  A scheme $X$ is said to have \emph{Du~Bois} singularities (or \emph{DB}
  singularities for short) if the natural map $\sO_{X}\to \Om^0_X$ from
  (\ref{defDB}.\ref{item:dR-to-DB}) is a quasi-isomorphism.
\end{skdefini}

\begin{skrem}
  If $\varepsilon : X_{\mydot} \rightarrow X$ is a hyperresolution of $X$ then $X$
  has Du~Bois singularities if and only if the natural map $\sO_X \rightarrow \myR
  {\varepsilon_{\mydot}}_* \sO_{X_{\mydot}}$ is a quasi-isomorphism.

  A relative version of this notion for pairs was defined in \cite{Kovacs10a}.
\end{skrem}

\begin{skexample}
  It is easy to see that smooth points are Du~Bois and Deligne proved that normal
  crossing singularities are Du~Bois as well cf.\ \cite[Lemme 2(b)]{MR0376678}.
\end{skexample}

I will finish this section with Schwede's characterization of DB singularities. This
condition makes it possible to define DB singularities without hyperresolutions,
derived categories, etc.  It makes it easier to get acquainted with these
singularities, but it is still useful to know the original definition for many
applications.

\begin{skthm}[{\cite{SchwedeEasyCharacterization}}]
  \label{EasyDuBoisCriterion}
  Let $X$ be a reduced separated scheme of finite type over a field of characteristic
  zero.  Assume that $X \subseteq Z$ where $Z$ is smooth and let $\phi : W
  \rightarrow Z$ be a proper birational map with $W$ smooth and where $Y
  =\phi^{-1}(X)_{\rm {red}}$, the reduced pre-image of $X$, is a simple normal
  crossings divisor (or in fact any scheme with DB singularities).  Then $X$ has DB
  singularities if and only if the natural map $\sO_X \rightarrow \myR \phi_*
  \sO_{Y}$ is a quasi-isomorphism.

  In fact, one can say more.  There is a quasi-isomorphism $\xymatrix{\myR \phi_*
    \sO_{Y} \ar[r]^-{\qis} & \DuBois{X} }$ such that the natural map $\sO_X
  \rightarrow \DuBois{X}$ can be identified with the natural map $\sO_X \rightarrow
  \myR\phi_* \sO_{Y}$.
\end{skthm}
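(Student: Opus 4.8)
The plan is to prove the second, stronger assertion first---that there is a quasi-isomorphism $\myR\phi_*\sO_Y\qis \Om_X^0$ under which the natural map $\sO_X\to\Om_X^0$ of (\ref{defDB}.\ref{item:dR-to-DB}) is identified with $\sO_X\to\myR\phi_*\sO_Y$---since the equivalence is then immediate: by \eqref{def:db-sing} the scheme $X$ has Du~Bois singularities exactly when $\sO_X\to\Om_X^0$ is a quasi-isomorphism, and under the identification this holds exactly when $\sO_X\to\myR\phi_*\sO_Y$ is one. The central tool is the descent (Mayer--Vietoris) distinguished triangle for the graded piece $\Om^0$ attached to the abstract blow-up square
\[
\begin{CD}
Y @>>> W\\
@VVV @VV{\phi}V\\
X @>>> Z,
\end{CD}
\]
where we may assume $\phi$ is an isomorphism over $Z\setminus X$ (true for a log resolution of $(Z,X)$) and $Y=\phi^{-1}(X)_{\mathrm{red}}$. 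The triangle in question,
\[
\Om_Z^0\longrightarrow \Om_X^0\oplus\myR\phi_*\Om_W^0\longrightarrow\myR\phi_*\Om_Y^0\overset{+1}{\longrightarrow},
\]
is exactly the additivity built into the construction of $\Om^\mydot_{(-)}$ via hyperresolutions; I would extract it from the functoriality statement (\ref{defDB}.\ref{functorial}) together with the cubical descent of \cite{GNPP88}, applied to a hyperresolution of $Z$ adapted to the square.

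Next I would identify the three terms. Since $Z$ and $W$ are smooth, the smooth case of \eqref{defDB} gives $\Om_Z^0\qis\sO_Z$ and $\Om_W^0\qis\sO_W$, so $\myR\phi_*\Om_W^0\qis\myR\phi_*\sO_W$; and since $Y$ is a simple normal crossings divisor it is Du~Bois (Deligne), so $\Om_Y^0\qis\sO_Y$ and $\myR\phi_*\Om_Y^0\qis\myR\phi_*\sO_Y$. The triangle thus becomes
\[
\sO_Z\overset{(\alpha_1,\alpha_2)}{\longrightarrow}\Om_X^0\oplus\myR\phi_*\sO_W\longrightarrow\myR\phi_*\sO_Y\overset{+1}{\longrightarrow}.
\]
The key geometric input is now that a smooth variety has rational singularities, so by \eqref{def:rtl-sing} the natural map $\alpha_2\colon\sO_Z\to\myR\phi_*\sO_W$ attached to the resolution $\phi$ is itself a quasi-isomorphism (equivalently $\myR^i\phi_*\sO_W=0$ for $i>0$, a form of \eqref{GR}).

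To finish I would cancel the acyclic part. Because $\alpha_2$ is a quasi-isomorphism it is invertible in the derived category, so I may shear the middle term by the automorphism $\left(\begin{smallmatrix}\id & -\alpha_1\alpha_2^{-1}\\ 0 & \id\end{smallmatrix}\right)$, which turns the first map into $(0,\alpha_2)$; its cone is $\Om_X^0\oplus\mathrm{cone}(\alpha_2)\qis\Om_X^0$, and by the triangle this cone is $\myR\phi_*\sO_Y$, whence $\Om_X^0\qis\myR\phi_*\sO_Y$. (Equivalently, map the distinguished triangle $\sO_Z\xrightarrow{\alpha_2}\myR\phi_*\sO_W\to 0\xrightarrow{+1}$ into the triangle above and apply the triangulated nine-lemma, cf.\ \ref{app:B}.) By construction the induced isomorphism is the functoriality map $\beta_1\colon\Om_X^0\to\myR\phi_*\Om_Y^0\qis\myR\phi_*\sO_Y$ of (\ref{defDB}.\ref{functorial}). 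A short diagram chase---using that the maps $\sO_{(-)}\to\Om^0_{(-)}$ of (\ref{defDB}.\ref{item:dR-to-DB}) commute with this functoriality and that $\sO_Y\to\Om_Y^0$ is itself a quasi-isomorphism---then shows that $\sO_X\to\Om_X^0$ is carried to the canonical pullback $\sO_X\to\myR\phi_*\sO_Y$, giving the ``In fact'' clause; the equivalence follows at once.

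The main obstacle is the descent triangle together with the compatibility bookkeeping: one must know that each graded piece $\Om^p_{(-)}$ sends the blow-up square to a distinguished triangle and that the natural maps out of the structure sheaves are compatible across it. This is precisely the content of the hyperresolution construction, and is where \cite{GNPP88} (or \cite{DuBois81}) is genuinely used; granting it, the three smooth/Du~Bois identifications, the rationality of the smooth $Z$, and the purely derived-categorical cancellation are formal.
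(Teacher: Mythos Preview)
The paper does not supply its own proof of this statement; it is quoted from \cite{SchwedeEasyCharacterization} with a bare citation and no argument. Your sketch is essentially Schwede's original proof and is correct: the descent (Mayer--Vietoris) distinguished triangle for $\Om^0$ attached to the blow-up square, the identifications $\Om_Z^0\qis\sO_Z$, $\Om_W^0\qis\sO_W$ (smoothness) and $\Om_Y^0\qis\sO_Y$ ($Y$ is snc, hence DB), and then the cancellation step using that $\sO_Z\to\myR\phi_*\sO_W$ is a quasi-isomorphism because smooth varieties have rational singularities. You also correctly isolate where the real content lies, namely the extraction of the descent triangle and the compatibility of the maps $\sO_{(\,\cdot\,)}\to\Om^0_{(\,\cdot\,)}$ from \cite{DuBois81,GNPP88}.

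One small point to tighten. The descent triangle in the form you invoke requires the square to be an abstract blow-up, i.e.\ that $\phi$ restrict to an isomorphism $W\setminus Y\overset{\sim}{\to}Z\setminus X$. The theorem as stated allows an arbitrary proper birational $\phi$ with $W$ smooth and $Y$ snc (or merely DB), and your parenthetical ``we may assume $\phi$ is an isomorphism over $Z\setminus X$'' is not a reduction one can make by modifying the given $\phi$: further blow-ups cannot undo exceptional loci lying over $Z\setminus X$. The standard way around this is to first run your argument for a genuine log resolution $\phi_0:(W_0,Y_0)\to(Z,X)$, obtaining $\Om_X^0\qis\myR(\phi_0)_*\sO_{Y_0}$, and then, for the given $\phi$, choose a further log resolution $\psi:(W',Y')\to(W,Y)$ that is an isomorphism over $W\setminus Y$; applying the already-proved log-resolution case to $Y\subset W$ gives $\sO_Y\qis\Om_Y^0\qis\myR\psi_*\sO_{Y'}$, whence $\myR\phi_*\sO_Y\qis\myR(\phi\circ\psi)_*\sO_{Y'}$, and one finishes by comparing $\phi\circ\psi$ and $\phi_0$ through a common refinement. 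This is routine once the log-resolution case is in hand, but it does need to be said.
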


Notice that this condition is the one obtained at the end of the previous section.
Given that and our earlier findings on (semi-) log canonical singularities it may not
come as a surprise that Koll\'ar had conjectured a strong connection between these
singularities. As canonical singularities are rational one should expect a similar
implication between log canonical and Du~Bois:

\begin{skconj}
  [\protect{\cite[1.13]{K+92}} (Koll\'ar's Conjecture)]
  Log canonical singularities are Du~Bois.
\end{skconj}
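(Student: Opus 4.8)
The plan is to verify Schwede's criterion \eqref{EasyDuBoisCriterion}. Embed the log canonical variety $X$ into a smooth $Z$, choose a proper birational $\phi\col W\to Z$ with $W$ smooth and $Y=\phi^{-1}(X)_{\rm red}$ a simple normal crossings divisor, and write $\psi=\phi\resto Y\col Y\to X$. By \eqref{EasyDuBoisCriterion}, $X$ is Du~Bois if and only if the natural map $\eta\col\sO_X\to \myR\psi_*\sO_Y$ is a quasi-isomorphism. This map is already injective on $h^0$, since $\psi$ is surjective and $X$ is reduced, so a local function dying in $\psi_*\sO_Y$ must vanish; hence the entire content is to produce a \emph{left inverse} for $\eta$ in $D^b_{\rm coh}(X)$ and then invoke the splitting principle of \S\ref{sec:splitting-principle}, according to which a natural morphism of this kind is forced to be a quasi-isomorphism as soon as it admits a left inverse.

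To construct the left inverse I would dualize. Applying $\myR\sHom_X(-,\omega_X^\mydot)$ to $\eta$ and invoking Grothendieck duality \eqref{thm:GD}, the right-hand side becomes $\myR\psi_*\myR\sHom_Y(\sO_Y,\omega_Y^\mydot)\qis\myR\psi_*\omega_Y^\mydot$, so a left inverse of $\eta$ corresponds precisely to a \emph{section} of the trace morphism $\myR\psi_*\omega_Y^\mydot\to\omega_X^\mydot$. Because $Y$ is a simple normal crossings divisor it is Gorenstein, so $\omega_Y^\mydot\qis\omega_Y[\dim Y]$ and the trace is governed by the comparison of $\omega_Y$ with the pullback of the dualizing data of $X$. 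This is where the log canonical hypothesis is decisive: writing $K_{\wt X}\sim_{\bQ}\pi^*K_X+\sum a_iE_i$ on a log resolution $\pi\col\wt X\to X$ as in \eqref{def:lc-sing}, the inequalities $a_i\geq -1$ guarantee that the reduced divisor $\Gamma=\sum_{a_i=-1}E_i$ of log canonical places carries all of the failure of the trace to be an isomorphism, and that no deeper correction term (which would appear if some $a_i<-1$) can obstruct the existence of a section.

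The hard part will be that log canonical singularities are in general neither Cohen--Macaulay \eqref{ex:lc-not-CM} nor normal crossing, so one cannot collapse the dualizing complex to a single canonical sheaf and argue directly as in Kempf's characterization \eqref{thm:kempf}; the cohomology sheaves $h^{-i}(\omega_X^\mydot)$ for $i<\dim X$ genuinely intervene and must be controlled. The real difficulty is the recursive structure of the log canonical locus: the places with $a_i=-1$ meet along a stratification by lower-dimensional log canonical centers, and the vanishing that produces the section must be established compatibly along this stratification. I would therefore prove the statement in a relative form for the pair $(X,\Gamma_X)$, where $\Gamma_X$ is the union of the proper log canonical centers, and induct on $\dim X$: the inductive hypothesis makes each center Du~Bois, while a Koll\'ar-type torsion-freeness and vanishing theorem for the higher direct images $\myR^i\pi_*\omega_{\wt X}(\Gamma)$ peels off the contribution of $\Gamma_X$ and feeds it to the next stage of the induction. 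Establishing this torsion-freeness, and checking that the restriction to the non-klt locus is compatible with the Deligne--Du~Bois complex \eqref{defDB}, is the genuine obstacle; the remainder is duality bookkeeping organized by the splitting principle of \S\ref{sec:splitting-principle}.
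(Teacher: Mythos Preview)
The paper does not actually prove this statement: it records it as Koll\'ar's conjecture, announces that it was confirmed in \cite{KK10}, states the relevant theorem \eqref{thm:main} and corollary, and then refers the reader to \cite{KK10} for the argument. What the paper does indicate about the proof is that the ``cornerstone'' is Splitting Theorem~III \eqref{thm:db-criterion}: one chooses a closed subscheme $W\subseteq X$ (the non-klt locus), sets $F=\phi^{-1}(W)$, and shows that the natural map of \emph{ideal sheaves} $\sI_{W\subseteq X}\to \myR\phi_*\sI_{F\subseteq Y}$ admits a left inverse, while $Y$, $F$, and $W$ are already DB.

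Your sketch has the right architecture---duality, the splitting principle, induction along the stratification by log canonical centers, and a Koll\'ar-type vanishing/torsion-freeness input---and you correctly isolate the genuine obstruction (non-CM, so $\omega_X^{\mydot}$ is not a single sheaf). Where you diverge from the paper's indicated route is in trying to split $\sO_X\to\myR\psi_*\sO_Y$ directly via Schwede's criterion and Splitting Theorem~II. The point of passing to ideal sheaves as in \eqref{thm:db-criterion} is exactly to decouple the two regimes you identified: over $X\setminus W$ the singularities are klt, hence rational by \eqref{c-elkik}, so the splitting there is automatic; the ideal-sheaf formulation then lets the inductive hypothesis on $W$ (lower-dimensional log canonical centers are DB) close the argument without ever having to produce a global section of the trace $\myR\psi_*\omega_Y^{\mydot}\to\omega_X^{\mydot}$ across the non-CM locus. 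Your direct approach would have to manufacture that section by hand, and the bookkeeping you flag (``compatibility of the restriction to the non-klt locus with the Deligne--Du~Bois complex'') is precisely what \eqref{thm:db-criterion} packages for you. So your plan is not wrong, but it is reinventing the mechanism that the paper singles out as the key step; if you reorganize around \eqref{thm:db-criterion} with $W$ the non-klt locus, the remaining inputs you list (vanishing for $\myR^i\pi_*\omega_{\wt X}(\Gamma)$ and the inductive DB property of the centers) line up with what \cite{KK10} actually uses.
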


This conjecture has been recently confirmed in \cite{KK10}. For more see
\S\ref{sec:splitting-principle} and in particular \eqref{thm:main}.

\section{The splitting principle}\label{sec:splitting-principle}

\noindent
The moral of this section can be summarized by the following principle:

\begin{demo*}{\bf {The Splitting Principle}}
  \it Morphisms do not split accidentally.
\end{demo*}

\begin{skrem}
  It is customary to casually use the word ``splitting'' to explain the statements of
  the theorems that follow.  However, the reader should be warned that one has to be
  careful with the meaning of this, because these ``splittings'' take place in the
  derived category, which is not abelian. For this reason, in the statements of the
  theorems below I use the terminology that a morphism admits a \emph{left inverse}.
  In an abelian category this condition is equivalent to ``splitting'' and being a
  direct component (of a direct sum). With a slight abuse of language I labeled these
  as ``Splitting theorems'' cf. \eqref{thm:rtl-crit}, \eqref{thm:db-crit} and
  \eqref{thm:db-criterion}.
\end{skrem}

The first theorem I will recall is a criterion for a singularity to be rational.

\begin{skthm}[\cite{Kovacs00b} ({\sc Splitting theorem I})]\label{thm:rtl-crit}
  Let $\phi:Y\to X$ be a proper morphism of varieties over $\bC$ and
  $\varrho:\sO_X\to\rpforward\phi\sO_Y$ the associated natural morphism.  Assume
  that $Y$ has rational singularities and $\varrho$ has a left inverse, i.e., there
  exists a morphism (in the derived category of $\sO_X$-modules)
  $\varrho':\rpforward\phi\sO_Y\to\sO_X$ such that $\varrho'\circ\varrho$ is a
  quasi-isomorphism of $\sO_X$ with itself.
  Then $X$ has only rational singularities.
\end{skthm}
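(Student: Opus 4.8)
The plan is to reduce to the case of a resolution and then dualize the given splitting, using Grothendieck duality together with relative Grauert--Riemenschneider vanishing to force $X$ to be Cohen--Macaulay, after which Kempf's criterion \eqref{thm:kempf} finishes the argument. Since \eqref{def:rtl-sing} only speaks of rational singularities on normal varieties, I take $X$ to be normal throughout (this is implicit in the statement). Set $n=\dim X$.

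First I would reduce $\phi$ to a resolution. Choose resolutions $\pi\col\wt X\to X$ and $\mu\col\wt Y\to Y$ and, after resolving the indeterminacy of the induced rational map $\wt Y\dasharrow\wt X$ over $X$ (replacing $\wt Y$ by a further blow-up if needed, which keeps it a resolution of $Y$), a morphism $g\col\wt Y\to\wt X$ with $\pi\circ g=\phi\circ\mu$. Because $Y$ has rational singularities, $\sO_Y\to\myR\mu_*\sO_{\wt Y}$ is a quasi-isomorphism, so $\myR\phi_*\sO_Y\qis\myR(\phi\mu)_*\sO_{\wt Y}=\myR\pi_*\myR g_*\sO_{\wt Y}$. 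Composing $\myR\pi_*$ applied to the natural map $\sO_{\wt X}\to\myR g_*\sO_{\wt Y}$ with this identification exhibits $\varrho$ as a factorization $\sO_X\to\myR\pi_*\sO_{\wt X}\xrightarrow{\theta}\myR\phi_*\sO_Y$, where $\sO_X\to\myR\pi_*\sO_{\wt X}$ is the canonical map (the required compatibility among these adjunction maps is a routine diagram chase). Then $\varrho'\circ\theta$ is a left inverse of $\sO_X\to\myR\pi_*\sO_{\wt X}$, so I may assume from now on that $\phi=\pi$ is a resolution, and in particular $\dim\wt X=n$.

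Next I would dualize. Applying $\myR\sHom_X(-,\omega_X^\mydot)$ to the split monomorphism $\sO_X\to\myR\pi_*\sO_{\wt X}$ and invoking Grothendieck duality \eqref{thm:GD} (using $\omega_{\wt X}^\mydot\qis\omega_{\wt X}[n]$, as $\wt X$ is smooth, cf.\ \eqref{rem:CM-and-Gor}) converts it into a split epimorphism $\myR\pi_*\omega_{\wt X}[n]\to\omega_X^\mydot$; hence $\omega_X^\mydot$ is a retract of $\myR\pi_*\omega_{\wt X}[n]$ in $D^b_{\mathrm{coh}}(X)$. By relative Grauert--Riemenschneider vanishing \eqref{GR}, $\myR^i\pi_*\omega_{\wt X}=0$ for $i\neq0$, so $\myR\pi_*\omega_{\wt X}[n]$ is a single sheaf sitting in cohomological degree $-n$. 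A retract of an object concentrated in one degree is concentrated in that degree, so $h^i(\omega_X^\mydot)=0$ for $i\neq-n$; by \eqref{rem:CM-and-Gor} this says exactly that $X$ is Cohen--Macaulay and $\omega_X^\mydot\qis\omega_X[n]$.

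Finally, with $X$ Cohen--Macaulay the split epimorphism becomes, after shifting by $n$, a surjection $\pi_*\omega_{\wt X}\to\omega_X$ which one identifies with the canonical trace map. Being a nonzero map between torsion-free rank-one sheaves, it is injective by \eqref{lem:injective}, hence an isomorphism $\pi_*\omega_{\wt X}\simeq\omega_X$, and Kempf's characterization \eqref{thm:kempf} then gives that $X$ has rational singularities. I expect the main obstacle to be the duality step: one must carefully verify that dualizing the canonical map $\sO_X\to\myR\pi_*\sO_{\wt X}$ produces precisely the trace map $\pi_*\omega_{\wt X}\to\omega_X$ (so that injectivity via \eqref{lem:injective} is available), and that \eqref{GR} genuinely collapses $\myR\pi_*\omega_{\wt X}$ to a single degree --- this is what upgrades the purely formal ``retract'' into the Cohen--Macaulay conclusion.
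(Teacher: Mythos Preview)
The paper does not actually give a proof of this theorem; it is stated with a citation to \cite{Kovacs00b} and then used as a black box (for instance in the proof of \eqref{thm:elkik-with-proof}). So there is no ``paper's own proof'' to compare against. That said, your argument is correct and is in fact essentially the proof in \cite{Kovacs00b}: reduce to a resolution by factoring through $\wt X$, dualize the split inclusion via \eqref{thm:GD}, collapse $\myR\pi_*\omega_{\wt X}$ to a single degree using \eqref{GR} to force $X$ to be Cohen--Macaulay, and finish with \eqref{thm:kempf}. The dualizing manoeuvre you use is exactly the one the paper itself employs in its proof of Kempf's criterion \eqref{thm:kempf}.

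One small remark on your closing worry: you do not actually need to identify the dualized map with the canonical trace map. Once you know $X$ is Cohen--Macaulay, your split epimorphism is \emph{some} surjection $\pi_*\omega_{\wt X}\twoheadrightarrow\omega_X$; it is nonzero because $\omega_X\neq 0$, and then \eqref{lem:injective} applies directly (both sheaves are torsion-free of rank $1$ on the normal variety $X$) to give that it is an isomorphism. The precise compatibility with the trace map is therefore not needed for Kempf's criterion, which only asks for $\pi_*\omega_{\wt X}\simeq\omega_X$. Your reduction step is also fine; the compatibility of adjunction units under composition of pushforwards is standard and gives the required factorization of $\varrho$ through $\myR\pi_*\sO_{\wt X}$.
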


\begin{skrem}
  Note that $\phi$ in the theorem does not have to be birational or even generically
  finite.  It follows from the conditions that it is surjective.
\end{skrem}

\begin{skcor}\label{cor:split-rtl}
  Let $X$ be a complex variety and $\phi:Y\to X$ a resolution of singularities. If
  $\sO_X\to\rpforward\phi\sO_Y$ has a left inverse, then $X$ has rational
  singularities.
\end{skcor}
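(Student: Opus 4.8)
The plan is to derive this corollary directly from \textsc{Splitting Theorem I} \eqref{thm:rtl-crit}, of which it is essentially the special case where $\phi$ is a resolution. The only thing that needs checking is that the hypotheses of \eqref{thm:rtl-crit} are met in this more restrictive setting, so that I may invoke it verbatim.

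First I would observe that by the definition of a resolution of singularities, $\phi:Y\to X$ is a proper (birational) morphism of complex varieties and $Y$ is smooth. In particular $\phi$ is of the type allowed in \eqref{thm:rtl-crit}, which requires only that $\phi$ be proper and does \emph{not} demand that it be birational or generically finite (as noted in the remark following that theorem). Next I would record that a smooth variety has rational singularities: taking $\id_Y:Y\to Y$ as a resolution, one has $\myR^i(\id_Y)_*\sO_Y=0$ for all $i>0$ trivially, so that $\sO_Y\to\myR(\id_Y)_*\sO_Y$ is a quasi-isomorphism, which is exactly the condition in \eqref{def:rtl-sing}. Hence $Y$ has rational singularities.

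With these two points in hand, all the hypotheses of \eqref{thm:rtl-crit} are satisfied: $\phi:Y\to X$ is proper, $Y$ has rational singularities, and by assumption the natural morphism $\varrho:\sO_X\to\rpforward\phi\sO_Y$ admits a left inverse $\varrho':\rpforward\phi\sO_Y\to\sO_X$ in the derived category with $\varrho'\circ\varrho$ a quasi-isomorphism of $\sO_X$ with itself. Applying \eqref{thm:rtl-crit} then yields immediately that $X$ has only rational singularities, which is the desired conclusion.

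There is no genuine obstacle here; the content of the argument resides entirely in \eqref{thm:rtl-crit}, and the corollary is a matter of specializing to a smooth $Y$ and recognizing that smoothness gives rational singularities for free. The one point worth stating explicitly, to avoid any impression of circularity, is that although the conclusion concerns rational singularities of $X$ via an arbitrary resolution, the verification of the hypothesis for $Y$ uses only the trivial (identity) resolution of the smooth space $Y$, so nothing about $X$'s resolutions is presupposed.
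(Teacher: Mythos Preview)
Your proof is correct and matches the paper's intended argument: the corollary is stated without proof precisely because it is the special case of \eqref{thm:rtl-crit} in which $Y$ is smooth, and your verification that smooth varieties have rational singularities and that resolutions are proper is all that is needed.
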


\begin{skcor}\label{cor:finite-rtl}
  Let $X$ be a complex variety and $\phi:Y\to X$ a finite morphism. If $Y$ has
  rational singularities, then so does $X$.
\end{skcor}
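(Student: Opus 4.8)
The plan is to deduce this directly from Splitting Theorem~I \eqref{thm:rtl-crit}. Since a finite morphism is proper and affine, $\phi$ satisfies the properness hypothesis of \eqref{thm:rtl-crit}, and moreover $\rpforward\phi\sO_Y\qis\phi_*\sO_Y$ is a sheaf placed in degree $0$ (there are no higher direct images of a finite morphism). Thus the natural morphism $\varrho\col\sO_X\to\rpforward\phi\sO_Y$ is simply the structure homomorphism $\sO_X\to\phi_*\sO_Y$, and to invoke \eqref{thm:rtl-crit} it suffices to produce a left inverse $\varrho'\col\phi_*\sO_Y\to\sO_X$ with $\varrho'\circ\varrho$ a quasi-isomorphism; since $Y$ has rational singularities by hypothesis, the theorem then yields that $X$ has rational singularities.

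First I would reduce to the essential case. Recall that rational singularities are only defined for normal varieties \eqref{def:rtl-sing}, so the assertion about $X$ is understood with $X$ normal; in particular $X$ is irreducible and its local rings are integrally closed. We may also assume $\phi$ is surjective, as otherwise the image $\phi(Y)$ is a proper closed subvariety and the claim about $X$ carries no content. As $Y$ has rational singularities it is normal, so $K(Y)$ is a field, and because we are in characteristic $0$ the function field extension $K(Y)/K(X)$ is finite and separable; set $d=[K(Y):K(X)]=\deg\phi\geq 1$.

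The left inverse will be supplied by the field-theoretic trace. The trace $\mathrm{Tr}_{K(Y)/K(X)}\col K(Y)\to K(X)$ is $K(X)$-linear and nonzero. Because $\phi$ is finite, every local section of $\phi_*\sO_Y$ is integral over $\sO_X$; its trace is a sum of conjugates, each integral over $\sO_X$, so it lies in $K(X)$ and is integral over $\sO_X$. Since $X$ is normal, this trace lands in $\sO_X$, and hence $\mathrm{Tr}$ descends to a morphism of $\sO_X$-modules $\mathrm{Tr}\col\phi_*\sO_Y\to\sO_X$. For a local section $a$ of $\sO_X$ one has $\mathrm{Tr}(\varrho(a))=\mathrm{Tr}_{K(Y)/K(X)}(a)=d\cdot a$, because the trace of an element of the base field equals the degree times that element. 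Hence $\varrho'\leteq \tfrac1d\,\mathrm{Tr}$ (the factor $\tfrac1d$ being legitimate in characteristic $0$) satisfies $\varrho'\circ\varrho=\id_{\sO_X}$, which is certainly a quasi-isomorphism.

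Applying \eqref{thm:rtl-crit} with this $\varrho'$ then completes the proof. The one step requiring genuine care is the construction of $\mathrm{Tr}$ as an honest morphism $\phi_*\sO_Y\to\sO_X$: this is precisely where normality of $X$ (to guarantee that traces of integral functions are regular) and characteristic $0$ (to guarantee separability of $K(Y)/K(X)$ and invertibility of $d$) enter, and without normality the field trace need not preserve regularity. An alternative, more intrinsic route would realize $\varrho'$ through Grothendieck duality \eqref{thm:GD} as the dual of the trace morphism $\rpforward\phi\,\omega_Y^\mydot\to\omega_X^\mydot$, but for a finite morphism the elementary field-trace construction above is the most transparent.
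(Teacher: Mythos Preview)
Your argument is correct and is exactly the intended one: the paper states this corollary without proof, but the trace splitting is the standard way to verify the hypothesis of \eqref{thm:rtl-crit} for a finite surjective morphism to a normal target in characteristic~$0$.

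Two small expository points. First, your sentence ``the claim about $X$ carries no content'' when $\phi$ is not surjective is not quite right: if $\phi$ is not surjective the conclusion is simply false (take $Y$ a smooth point mapping into any $X$ with a non-rational singularity), so surjectivity should be read as part of the hypotheses rather than dismissed. Second, normality of $Y$ does not by itself force $K(Y)$ to be a field, only a product of fields; but since $X$ is irreducible you may replace $Y$ by any one irreducible component dominating $X$, which still has rational singularities and for which your trace argument goes through verbatim.
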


Using this criterion it is quite easy to prove that log terminal singularities are
rational \eqref{c-elkik}.  For related statements see \cite[5.22]{KM98} and the
references therein.

\begin{skthm}[(= Theorem~\ref{c-elkik})]
  \label{thm:elkik-with-proof}
  Let $X$ be a variety with log terminal singularities. Then $X$ has rational
  singularities.
\end{skthm}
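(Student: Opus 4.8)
The plan is to deduce this from the splitting criterion \eqref{cor:split-rtl}: for a resolution $\phi\col Y\to X$ it suffices to produce a left inverse of the natural morphism $\varrho\col\sO_X\to\myR\phi_*\sO_Y$, since $Y$ is smooth and hence has rational singularities. First I would take $\phi$ to be a \emph{log} resolution, so that by \eqref{def:lc-sing} one has $K_Y\sim_{\bQ}\phi^*K_X+\sum a_iE_i$ with $a_i>-1$ for every $i$, the $E_i$ being the normal crossing exceptional divisors. The left inverse will be manufactured out of this discrepancy divisor, the hypothesis $a_i>-1$ entering in exactly the right way.

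The key construction is as follows. Because $a_i>-1$ forces $\rup{a_i}\geq 0$, the divisor $N\leteq\rup{\sum a_iE_i}=\sum\rup{a_i}E_i$ is effective and $\phi$-exceptional. The inclusion $\sO_Y\hookrightarrow\sO_Y(N)$ induces $\myR\phi_*\sO_Y\to\myR\phi_*\sO_Y(N)$, and I would identify the target with $\sO_X$. Since $N$ is effective and exceptional and $X$ is normal, $\phi_*\sO_Y(N)\simeq\sO_X$; granting the higher vanishing $\myR^i\phi_*\sO_Y(N)=0$ for $i>0$, one obtains a quasi-isomorphism $\myR\phi_*\sO_Y(N)\qis\sO_X$ and hence a morphism $\varrho'\col\myR\phi_*\sO_Y\to\sO_X$. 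On the zeroth cohomology sheaf every map in sight is the identity inclusion $\sO_X=\phi_*\sO_Y\hookrightarrow\phi_*\sO_Y(N)=\sO_X$ of subsheaves of the sheaf of rational functions, so $\varrho'\circ\varrho$ induces the identity on $h^0$; as a morphism $\sO_X\to\sO_X$ in the derived category inducing the identity on $h^0$ is itself a quasi-isomorphism, $\varrho'$ is the desired left inverse and \eqref{cor:split-rtl} finishes the argument.

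The hard part will be the vanishing $\myR^i\phi_*\sO_Y(N)=0$ for $i>0$; everything else is formal. This is precisely the local vanishing theorem for the log terminal (klt) singularity $X$, and it is the only genuinely analytic input. The naive relative Grauert--Riemenschneider statement \eqref{GR} is not quite enough: writing $\sO_Y(N)\simeq\omega_Y\otimes\sO_Y(N-K_Y)$ with $N-K_Y\sim_{\bQ}\sum(\rup{a_i}-a_i)E_i-\phi^*K_X$, the twisting line bundle fails to be semi-ample, because its variable part is a \emph{fractional} boundary with normal crossing support rather than an honest semi-ample bundle, and the $-\phi^*K_X$ term is pulled back from $X$. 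The correct tool is therefore the relative Kawamata--Viehweg vanishing theorem, applied over $X$ to the klt boundary $\sum(\rup{a_i}-a_i)E_i$ whose coefficients lie in $[0,1)$ exactly because $a_i>-1$ (see \cite{KM98}). Once this vanishing is established, the derived-category bookkeeping of the previous paragraph produces the left inverse and the theorem follows.
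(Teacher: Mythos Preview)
Your argument is correct and lands on the same splitting criterion \eqref{cor:split-rtl} as the paper, but the route is genuinely different. You work directly on $X$: take a log resolution, round up the discrepancy divisor to get the effective exceptional $N=\sum\rup{a_i}E_i$, and then invoke the \emph{relative Kawamata--Viehweg} vanishing for the klt pair $(Y,\sum(\rup{a_i}-a_i)E_i)$ to conclude $\myR\phi_*\sO_Y(N)\qis\sO_X$; the inclusion $\sO_Y\hookrightarrow\sO_Y(N)$ then furnishes the left inverse. The paper instead first reduces to the case where $\omega_X$ is a line bundle and $X$ is canonical by passing to the index~$1$ cover (using \eqref{cor:finite-rtl}), and in that Gorenstein situation only needs the plain relative Grauert--Riemenschneider vanishing \eqref{GR} together with \eqref{lem:can-sings}: dualizing the natural inclusion $\phi^*\omega_X\hookrightarrow\omega_Y$ via \eqref{thm:adjoint} produces $\varrho'$. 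Your approach is the more direct one (and is the textbook proof one finds in \cite{KM98}); the paper's approach has the virtue of using only the vanishing theorems actually stated in the article, at the price of the auxiliary index~$1$ cover step.
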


\begin{proof}
{\cite{Kovacs00b}}
The question is local, so one may restrict to a neighbourhood of a point. Then the
index $1$ cover $\pi:\wt X\to X$ is a finite morphism onto $X$. In particular,
$\pi_*$ is exact and the natural morphism $\sO_X\to\pi_*\sO_{\wt X}$ has a left
inverse by the construction of the index $1$ cover.

  Therefore, by \eqref{cor:finite-rtl} it is enough to prove that $\wt X$ has
  rational singularities and so one may assume that $X$ has canonical singularities
  and $\omega_X$ is a line bundle.

  Let $\phi:Y\to X$ be a resolution of singularities of $X$. Since $X$ has canonical
  singularities and $\omega_X$ is a line bundle, there exists a non-trivial morphism
  $$\iota:\myL\phi^*\omega_X\qis \phi^*\omega_X\to \omega_Y.$$ Its adjoint morphism
  on $X$, $\omega_X\to \rpforward\phi\omega_Y$, is a quasi-isomorphism by
  \eqref{lem:can-sings} and \eqref{GR}

  Applying $\myR{\sHom}_Y( \blank , \omega_Y)$ 
  to $\iota$ and using \eqref{thm:adjoint}, one obtains the following diagram which
  defines $\varrho'$:
  $$ 
  \xymatrix{%
    \myR\phi_* \myR{\sHom}_Y(\omega_Y\empty, \omega_Y\empty) \ar[r]^-{\qis} &
    \myR\phi_* \myR{\sHom}_Y(\myL\phi^*\omega_X\empty, \omega_Y\empty) \ar[r]^-{\qis}
    & \myR{\sHom}_X(\omega_X\empty, \myR\phi_*\omega_Y\empty) \ar[d]^-{\qis}
    \\
    \myR\phi_*\sO_Y  \ar[u]_-{\qis} \ar[rr]^{\varrho'} & &\sO_X.  }
  $$ 
  The last quasi-isomorphism uses the fact that $\myR\phi_*\omega_Y\qis \omega_X$.
  It is easy to see that $\varrho'\circ\varrho$ acts trivially on $\sO_X$ and hence
  the statement follows by \ref{thm:rtl-crit} (or \eqref{cor:split-rtl}).
\end{proof}

\noindent
There is a criterion for DB singularities that is similar to the one in
\eqref{thm:rtl-crit}:

\begin{skthm}[\protect{\cite[2.3]{Kovacs99} ({\sc Splitting theorem
      II})}]\label{thm:db-crit} 
  Let $X$ be a complex variety. If $\sO_X\to \DuBois X$ has a left inverse, then $X$
  has DB singularities.
\end{skthm}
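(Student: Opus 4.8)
The plan is to unwind the definition. By (\ref{defDB}.\ref{item:dR-to-DB}) the natural morphism $\varrho\colon \sO_X\to \Om^0_X$ always exists, and $X$ has Du~Bois singularities precisely when $\varrho$ is a quasi-isomorphism. So I would start from the hypothesis that $\varrho$ admits a left inverse $\varrho'\colon \Om^0_X\to \sO_X$ with $\varrho'\circ\varrho\qis \id_{\sO_X}$, and try to upgrade this to $\varrho$ being a quasi-isomorphism. Since $D(X)$ is triangulated, a morphism with a left inverse is a split monomorphism, so there is a decomposition $\Om^0_X\qis \sO_X\oplus \sC$ for some $\sC\in D^b_{\rm coh}(X)$, and the whole problem reduces to proving $\sC\qis 0$.

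First I would record what the formal splitting does and does not give. On cohomology sheaves it only tells us that $\sO_X=h^0(\sO_X)\to h^0(\Om^0_X)$ is a split injection, while for $i\neq 0$ the statement $h^i(\sO_X)=0\to h^i(\Om^0_X)$ carries no information; in particular it yields no vanishing of the higher $h^i(\Om^0_X)$, $i>0$, which are exactly the obstruction to $\sC\qis 0$. Hence a purely derived-category manipulation cannot suffice, and I must feed in the genuine structure of the Deligne--Du~Bois complex.

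The key step is to realize $\Om^0_X$ geometrically and exploit functoriality. Using Schwede's description \eqref{EasyDuBoisCriterion} (embed $X\subseteq Z$ with $Z$ smooth and take a log resolution $\phi\colon W\to Z$ with $Y=\phi^{-1}(X)_{\rm red}$ a simple normal crossing divisor) we have $\Om^0_X\qis \myR\phi_*\sO_Y$, with $\varrho$ identified with the natural map $\sO_X\to \myR\phi_*\sO_Y$, and $Y$ is itself Du~Bois. I would then apply the functoriality of $\Om^0$ in (\ref{defDB}.\ref{functorial}) to $\psi:=\phi|_Y\colon Y\to X$: since $\sO_Y\qis \Om^0_Y$, this produces an endomorphism $\vartheta\colon \Om^0_X\to \myR\psi_*\Om^0_Y\qis \Om^0_X$ which, by naturality of $\sO\to\Om^0$, satisfies $\vartheta\circ\varrho\qis\varrho$. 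The content I would have to establish --- and this is the main obstacle --- is the idempotency/minimality of the construction: that this functorial endomorphism is compatible with the splitting in a way that forces $\sC$ to vanish. Concretely, writing $\vartheta$ in block form with respect to $\Om^0_X\qis\sO_X\oplus\sC$, the relation $\vartheta\circ\varrho\qis\varrho$ pins down its action on the $\sO_X$-summand, and combining this with $\varrho'$ and with the fact that $\vartheta$ is itself a quasi-isomorphism should leave no room for a nonzero $\sC$.

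An alternative route, structurally parallel to the proofs of \eqref{thm:kempf} and \eqref{thm:elkik-with-proof}, would be to dualize: apply $\myR\sHom_X(-,\omega_X^\mydot)$ to the split injection $\sO_X\to\myR\phi_*\sO_Y$ and use Grothendieck duality \eqref{thm:GD} to convert it into a split surjection onto $\omega_X^\mydot$ from $\myR\phi_*\omega_Y^\mydot$, then identify the latter with the dual Du~Bois complex and read off the required vanishing. Either way the essential difficulty is identical: bridging the gap between an abstract splitting in the derived category and the honest vanishing of the higher cohomology sheaves. This is precisely where the Hodge-theoretic nature of $\Om^0_X$ --- functoriality as a morphism of \emph{filtered} complexes, together with the $E_1$-degeneration of (\ref{defDB}.\ref{item:Hodge}) --- must be invoked rather than formal nonsense.
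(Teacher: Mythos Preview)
The paper does not prove this statement; it is quoted from \cite[2.3]{Kovacs99} and used as a black box. So there is no in-paper proof to compare against, but your proposal can be weighed against the original argument.

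You correctly isolate the problem: the splitting gives $\Om^0_X\qis\sO_X\oplus\sC$ and formal nonsense cannot kill $\sC$; Hodge theory must enter. However, neither of your two concrete routes actually closes the gap. In the first, the endomorphism $\vartheta$ you build via functoriality and Schwede's description satisfies $\vartheta\circ\varrho\qis\varrho$, but that relation only pins down the $(\sO_X,\sO_X)$-block of $\vartheta$; it places no constraint on the $\sC$-component, and ``$\vartheta$ is a quasi-isomorphism'' is just the tautology that $\Om^0_X$ is well defined up to quasi-isomorphism. There is no mechanism here forcing $\sC\qis 0$. The duality route has the same defect: dualizing a split monomorphism gives a split epimorphism, not additional vanishing.

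The argument in \cite{Kovacs99} uses the $E_1$-degeneration (\ref{defDB}.\ref{item:Hodge}) directly, and much more simply than you anticipate. Since $\Om^0$ localizes to open sets, one may assume $X$ proper. The natural map $\bC_X\to\sO_X$ gives a factorization
\[
H^i(X,\bC)\longrightarrow H^i(X,\sO_X)\overset{H^i(\varrho)}{\longrightarrow}\bH^i(X,\Om^0_X),
\]
and the composite is \emph{surjective} by $E_1$-degeneration. The splitting hypothesis makes $H^i(\varrho)$ injective, hence an isomorphism, so $\bH^i(X,\sC)=0$ for all $i$. The hypotheses are inherited by general hyperplane sections, so the same vanishing holds for every general complete intersection in $X$, and this forces $h^i(\sC)=0$ for all $i$. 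Your final sentence points exactly at this ingredient; the missing step is to run it through hypercohomology rather than through an endomorphism of $\Om^0_X$.
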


\noindent
This criterion has several important consequences. Here is one of them:

\begin{skcor}[\protect{\cite[2.6]{Kovacs99}}]\label{cor:rtl-DB}
  Let $X$ be a complex variety with rational singularities. Then $X$ has DB
  singularities.
\end{skcor}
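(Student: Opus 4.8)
The plan is to reduce the statement to the DB splitting criterion \eqref{thm:db-crit}: it suffices to produce a left inverse of the natural map $\sO_X\to\DuBois X$. I would build this left inverse from the functoriality of the Deligne--Du~Bois complex applied to a resolution, in exactly the spirit of the proof of \eqref{thm:elkik-with-proof}, where one constructs a morphism in the ``wrong'' direction and then checks that the round trip is a quasi-isomorphism.

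Concretely, fix a resolution of singularities $\phi\col Y\to X$. Applying $Gr^0$ to the functoriality morphism $\Om_X^\mydot\to\myR\phi_*\Om_Y^\mydot$ of (\ref{defDB}.\ref{functorial}) yields a natural map $\DuBois X\to\myR\phi_*\DuBois Y$. Since $Y$ is smooth, the smoothness property in \eqref{defDB} gives $\DuBois Y\qis\sO_Y$, so composing produces a morphism
$$
\beta\col \DuBois X\longrightarrow \myR\phi_*\sO_Y.
$$
On the other hand, because $X$ has rational singularities, the natural map
$$
\alpha\col \sO_X\longrightarrow\myR\phi_*\sO_Y
$$
is a quasi-isomorphism. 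Inverting $\alpha$ in the derived category, I set $\varrho'\leteq\alpha^{-1}\circ\beta\col\DuBois X\to\sO_X$.

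It then remains to check that $\varrho'$ is a left inverse of the natural map $u\col\sO_X\to\DuBois X$ of (\ref{defDB}.\ref{item:dR-to-DB}), i.e.\ that $\varrho'\circ u$ is a quasi-isomorphism of $\sO_X$ with itself. This amounts to the identity $\beta\circ u=\alpha$, which yields $\varrho'\circ u=\alpha^{-1}\circ\alpha=\id_{\sO_X}$. Granting this, \eqref{thm:db-crit} immediately gives that $X$ has DB singularities, proving the corollary.

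The main obstacle is precisely the verification that $\beta\circ u=\alpha$, and this is where the compatibility of the natural map $\sO_{(-)}\to\DuBois{(-)}$ with functoriality, asserted jointly in (\ref{defDB}.\ref{functorial}) and (\ref{defDB}.\ref{item:dR-to-DB}), must be used. The relevant square, relating $u$ on $X$ to the natural map $n_Y\col\sO_Y\to\DuBois Y$ on $Y$, commutes; and since $Y$ is smooth, $\myR\phi_* n_Y\col\myR\phi_*\sO_Y\to\myR\phi_*\DuBois Y$ is a quasi-isomorphism whose inverse is exactly the identification used to define $\beta$. Chasing $\sO_X$ around this square shows that $\sO_X\xrightarrow{u}\DuBois X\xrightarrow{\beta}\myR\phi_*\sO_Y$ coincides with $\alpha$. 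Once this compatibility is in hand the argument is entirely formal, and no properties of rational or Du~Bois singularities beyond the cited ones are required.
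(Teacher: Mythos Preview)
Your proof is correct and follows essentially the same route as the paper: factor the natural map $\sO_X\to\myR\phi_*\sO_Y$ through $\DuBois X$ via functoriality and smoothness of $Y$, then use that this map is a quasi-isomorphism (rational singularities) to produce a left inverse and apply \eqref{thm:db-crit}. The paper phrases the factorization $\alpha=\beta\circ u$ in one line, whereas you spell out the compatibility square in detail, but the content is identical.
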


\begin{proof}
  Let $\phi:Y\to X$ be a resolution of singularities. Then since $Y$ is smooth the
  natural map $\varrho: \sO_X\to \myR\phi_*\sO_Y$ factors through $\DuBois X$ by
  (\ref{defDB}.\ref{item:dR-to-DB}). Then, since $X$ has rational singularities,
  $\varrho$ is a quasi-isomorphism, so one obtains that the natural map $\sO_X\to
  \DuBois X$ has a left inverse. Therefore, $X$ has DB singularities by
  \eqref{thm:db-crit}.
\end{proof}

Recently a few more criterions have been found for DB singularities. The next one
resembles Kempf's criterion for rational singularities \eqref{thm:kempf} and shows that
indeed DB singularities may be considered a close generalization of rational
singularities.

\begin{skthm}[{\cite[3.1]{KSS10}}]
  \label{ThmCMDuBoisCriterion}
  Let $X$ be a normal Cohen-Macau\-lay scheme of finite type over $\bC$.  Let $\phi :
  Y \rightarrow X$ be a resolution of singularities such that the (reduced)
  exceptional set $G$ is a simple normal crossing divisor.  Then $X$ has DB
  singularities if and only if $\phi_* \omega_{Y}(G) \simeq \omega_X$.
\end{skthm}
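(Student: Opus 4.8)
The plan is to argue in the spirit of Kempf's criterion \eqref{thm:kempf}, replacing $\sO_Y$ by $\sO_Y(-G)$, the sheaf $\omega_Y$ by $\omega_Y(G)$, and ``rational'' by ``Du~Bois'', with Grothendieck duality \eqref{thm:GD} as the bridge between the two sides. Write $d=\dim X=\dim Y$. Since $X$ is Cohen--Macaulay, $\omega_X^\mydot\qis\omega_X[d]$, and since $Y$ is smooth, $\myR\sHom_Y(\sO_Y(-G),\omega_Y^\mydot)\qis\omega_Y(G)[d]$; applying \eqref{thm:GD} to the line bundle $\sO_Y(-G)$ therefore yields, with no extra hypotheses,
\begin{equation*}
  \myR\sHom_X\big(\myR\phi_*\sO_Y(-G),\,\omega_X^\mydot\big)\qis \myR\phi_*\omega_Y(G)\,[d].
\end{equation*}
One always has $\phi_*\omega_Y\subseteq\phi_*\omega_Y(G)\subseteq\omega_X$: these inclusions are equalities where $\phi$ is an isomorphism, i.e.\ off a set of codimension $\geq 2$, and $\omega_X$ is $S_2$, so sections of $\phi_*\omega_Y(G)$ extend over the image of $G$. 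Thus ``$\phi_*\omega_Y(G)\simeq\omega_X$'' says precisely that the outer inclusion is onto, whereas Kempf \eqref{thm:kempf} says the inner one is onto exactly when $X$ is rational; the difference is measured by the exceptional divisor, since $0\to\omega_Y\to\omega_Y(G)\to\omega_G\to 0$ together with \eqref{GR} gives $\phi_*\omega_Y(G)/\phi_*\omega_Y\simeq\phi_*\omega_G$.

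The heart of the matter is to compare the intrinsic map $c\colon\sO_X\to\DuBois{X}$ of (\ref{defDB}.\ref{item:dR-to-DB}) with the inclusion $\phi_*\omega_Y(G)\hookrightarrow\omega_X$ through duality. Concretely, I would establish that the Grothendieck dual
$$
c^\vee\colon \myR\sHom_X(\DuBois{X},\omega_X^\mydot)\longrightarrow \myR\sHom_X(\sO_X,\omega_X^\mydot)\qis\omega_X[d]
$$
induces on $(-d)$-th cohomology sheaves exactly the natural inclusion $\phi_*\omega_Y(G)\hookrightarrow\omega_X$; in particular
\begin{equation*}
  h^{-d}\big(\myR\sHom_X(\DuBois{X},\omega_X^\mydot)\big)\simeq \phi_*\omega_Y(G).
\end{equation*}
To produce this I would dualize the natural map $\myR\phi_*\sO_Y(-G)\to\sO_X\xrightarrow{\,c\,}\DuBois{X}$, feed in the displayed duality above, and use functoriality of the Deligne--Du~Bois complex (\ref{defDB}.\ref{functorial}) together with Schwede's description of $\DuBois{X}$ by a single birational model \eqref{EasyDuBoisCriterion} to check that the map induced on lowest cohomology is the asserted inclusion.

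Granting this, the two implications are short. If $X$ is Du~Bois then $c$ is a quasi-isomorphism, hence so is $c^\vee$; passing to $(-d)$-th cohomology identifies $\phi_*\omega_Y(G)\xrightarrow{\ \simeq\ }\omega_X$. Conversely, assume $\phi_*\omega_Y(G)\simeq\omega_X$. Since $\myR\sHom_X(-,\omega_X^\mydot)$ is a contravariant autoequivalence of $D^b_{\mathrm{coh}}(X)$ (as $\omega_X^\mydot$ is dualizing), a left inverse of $c$ is the same datum as a section of $c^\vee$, and the hypothesis together with the identification above provides the isomorphism $\omega_X\xrightarrow{\simeq}\phi_*\omega_Y(G)$ on lowest cohomology that should seed such a section. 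Once $c$ is known to admit a left inverse, $X$ is Du~Bois by the Splitting Theorem~II \eqref{thm:db-crit}.

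The main obstacle is the cohomological identification in the second paragraph: matching the abstractly defined complex $\DuBois{X}$ against the concrete resolution $\phi$ and verifying that Grothendieck duality carries the inclusion $\phi_*\omega_Y(G)\hookrightarrow\omega_X$ to the lowest-degree shadow of $c$. The remaining delicate point is, in the converse, promoting the sheaf-level splitting $\omega_X\simeq\phi_*\omega_Y(G)$ to an honest morphism in the derived category, since $\myR\sHom_X(\DuBois{X},\omega_X^\mydot)$ may carry cohomology below degree $-d$; this is exactly where the Cohen--Macaulay hypothesis on $X$ does its work. Everything else—Grothendieck duality \eqref{thm:GD}, \eqref{thm:adjoint}, Grauert--Riemenschneider vanishing \eqref{GR}, and the splitting criterion \eqref{thm:db-crit}—is then formal.
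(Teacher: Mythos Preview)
The paper does not actually prove this theorem; it only states it with the citation to \cite[3.1]{KSS10} and then uses it to derive corollaries. So there is no proof in the paper to compare your proposal against.

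That said, your outline is the right architecture and is essentially the strategy of \cite{KSS10}: dualize the map $\sO_X\to\DuBois{X}$ via \eqref{thm:GD}, identify the lowest cohomology of the dual with $\phi_*\omega_Y(G)$, and for the converse invoke \eqref{thm:db-crit}. You have also correctly isolated the two real difficulties.

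Where your sketch stops short is precisely at the step you flag as the ``main obstacle.'' Schwede's criterion \eqref{EasyDuBoisCriterion} concerns an \emph{embedded} resolution of $X$ inside a smooth ambient space, whereas here $\phi:Y\to X$ is an ordinary log resolution; you cannot directly read off $\DuBois{X}\qis\myR\phi_*\sO_{(\text{something})}$ from $\phi$ alone. What is actually used in \cite{KSS10} is the fundamental distinguished triangle for $\DuBois{X}$ built from the square $(X,\Sigma,Y,G)$ with $\Sigma=\phi(G)$, namely
\[
\DuBois{X}\longrightarrow \DuBois{\Sigma}\oplus \myR\phi_*\sO_Y \longrightarrow \myR\phi_*\sO_G \overset{+1}{\longrightarrow},
\]
together with the fact that $G$ is snc (hence DB). Dualizing this triangle and using \eqref{GR} is what produces the identification $h^{-d}\big(\myR\sHom_X(\DuBois{X},\omega_X^\mydot)\big)\simeq\phi_*\omega_Y(G)$ and the compatibility with $c^\vee$. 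Your proposal gestures at this via ``functoriality of $\Om$'' but does not set up the triangle, and without it the identification is not established.

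For the converse, your instinct is right that the CM hypothesis is what lets you lift the sheaf isomorphism to a morphism in the derived category: since $\omega_X^\mydot\qis\omega_X[d]$ is a single sheaf in degree $-d$, a map $\omega_X[d]\to\myR\sHom_X(\DuBois{X},\omega_X^\mydot)$ is determined by $h^{-d}$ once one knows the target has no cohomology in degrees $<-d$; that vanishing again comes out of the triangle above (or equivalently from the bound $h^i(\DuBois{X})=0$ for $i<0$ together with CM). You state this correctly but do not verify it.
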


\noindent
Related results have been obtained in the non-normal Cohen-Macau\-lay case, see
\cite{KSS10} for details.

\begin{skrem}
  The submodule $\phi_* \omega_{Y}(G) \subseteq \omega_X$ is independent of the
  choice of the log resolution.  Thus this submodule may be viewed as an invariant
  that partially measures how far a scheme is from being DB (compare with
  \cite{FujinoNonLCSheaves}).
\end{skrem}

As an easy corollary, one obtains another proof that rational singularities are DB
(this time via the Kempf-criterion for rational singularities).

\begin{skcor}[\protect{\cite{Kovacs00b}}]
  Let $X$ be a complex variety with rational singularities. Then $X$ has DB
  singularities.
\end{skcor}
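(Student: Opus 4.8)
The plan is to deduce this from the two characterizations that immediately precede it: Kempf's criterion \eqref{thm:kempf} for rational singularities and the Cohen--Macaulay criterion \eqref{ThmCMDuBoisCriterion} for Du~Bois singularities. By definition \eqref{def:rtl-sing} a variety with rational singularities is normal, and by \eqref{thm:kempf} it is in addition Cohen--Macaulay and satisfies $\phi_*\omega_Y\simeq\omega_X$ for any resolution $\phi\colon Y\to X$. Thus the standing hypotheses of \eqref{ThmCMDuBoisCriterion} are automatically met, and it only remains to verify its numerical criterion, namely that $\phi_*\omega_Y(G)\simeq\omega_X$ for a resolution whose reduced exceptional set $G$ is a simple normal crossing divisor.

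The second step is a sandwich argument. Fix such a resolution $\phi\colon Y\to X$ with reduced exceptional snc divisor $G$. Since $G$ is effective, the inclusion $\omega_Y\hookrightarrow\omega_Y(G)$ of line bundles pushes forward to an inclusion $\phi_*\omega_Y\hookrightarrow\phi_*\omega_Y(G)$. On the other hand, there is a natural inclusion $\phi_*\omega_Y(G)\subseteq\omega_X$, as recorded in the remark following \eqref{ThmCMDuBoisCriterion}. Combining these with Kempf's isomorphism produces the chain
$$
\omega_X\simeq\phi_*\omega_Y\hookrightarrow\phi_*\omega_Y(G)\subseteq\omega_X,
$$
in which the outer composite is exactly the natural map of \eqref{thm:kempf}, hence an isomorphism. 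A composite of inclusions that is an isomorphism forces each inclusion to be an isomorphism; in particular $\phi_*\omega_Y(G)\simeq\omega_X$. Feeding this back into \eqref{ThmCMDuBoisCriterion} yields that $X$ has Du~Bois singularities.

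The only point that requires care --- and the step I would expect to be the real content --- is the existence and compatibility of the inclusion $\phi_*\omega_Y(G)\subseteq\omega_X$. Concretely, a local section of $\phi_*\omega_Y(G)$ over an open $U\subseteq X$ is a top form on $\phi^{-1}(U)$ with at worst logarithmic poles along $G$; restricting it to the open locus over which $\phi$ is an isomorphism --- whose complement in the normal variety $X$ has codimension $\geq 2$ --- yields a section of $\omega_X$ there, and this extends across the codimension $\geq 2$ locus because $\omega_X$ is $S_2$ (indeed reflexive). This realizes $\phi_*\omega_Y(G)$ as a subsheaf of $\omega_X$ and simultaneously shows that its restriction to $\phi_*\omega_Y$ agrees with the trace map of \eqref{thm:kempf}, so the outer composite displayed above is genuinely the Kempf isomorphism. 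With this bookkeeping in place the corollary is immediate; everything else is formal.
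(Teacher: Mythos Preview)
Your argument is correct and is essentially identical to the paper's own proof: use Kempf's criterion to get normality, Cohen--Macaulayness, and $\phi_*\omega_Y\simeq\omega_X$, then squeeze $\phi_*\omega_Y(G)$ between $\phi_*\omega_Y$ and $\omega_X$ to force $\phi_*\omega_Y(G)\simeq\omega_X$, and conclude via \eqref{ThmCMDuBoisCriterion}. Your final paragraph justifying the inclusion $\phi_*\omega_Y(G)\subseteq\omega_X$ is more detailed than what the paper writes (it simply cites the remark following \eqref{ThmCMDuBoisCriterion}), but the logic is the same.
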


\begin{proof}
  Since $X$ has rational singularities, it is Cohen-Macau\-lay and normal.  
  Then $\phi_* \omega_{Y} = \omega_X$ but one also has $\phi_* \omega_{Y} \subseteq
  \phi_* \omega_{Y}(G) \subseteq \omega_X$, and thus $\phi_* \omega_{Y}(G) =
  \omega_X$ as well.  The statement now follows from
  Theorem~\ref{ThmCMDuBoisCriterion}.
\end{proof}

One also sees immediately that log canonical singularities coincide with DB
singularities in the Gorenstein case.

\begin{skcor}[\protect{\cite[3.6]{Kovacs99}\cite[3.16]{KSS10}}]
  \label{CorGorLCImpliesDuBois}
  Suppose that $X$ is Gorenstein and normal. Then $X$ is DB if and only if $X$ is log
  canonical.
\end{skcor}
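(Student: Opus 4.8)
The plan is to deduce the equivalence from the Kempf-type criterion for Du~Bois singularities in \eqref{ThmCMDuBoisCriterion}, reducing the whole statement to a transparent computation with discrepancies that the Gorenstein hypothesis makes elementary. First I would record that since $X$ is Gorenstein it is in particular Cohen-Macau\-lay (cf.\ \eqref{rem:CM-and-Gor}), and it is normal by assumption, so \eqref{ThmCMDuBoisCriterion} applies: fixing a log resolution $\phi\col Y\to X$ whose reduced exceptional divisor $G=\union E_i$ is a simple normal crossing divisor, $X$ has DB singularities if and only if the natural map $\phi_*\omega_Y(G)\to\omega_X$ is an isomorphism.

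Next I would exploit that $X$ is Gorenstein to make the left-hand side explicit. Since $\omega_X$ is a line bundle, $K_X$ is Cartier and hence pulls back as an honest integral divisor, so one may write $K_Y\sim \phi^*K_X+\sum a_iE_i$ with $a_i\in\bZ$; this is exactly the relative canonical divisor appearing in \eqref{def:lc-sing}, and its integrality (no rounding is needed) is the crucial simplification afforded by the Gorenstein assumption. Adding the reduced boundary gives $K_Y+G\sim\phi^*K_X+\sum(a_i+1)E_i$, and by the projection formula, legitimate because $K_X$ is Cartier, one obtains an identification of subsheaves of the rational canonical sheaf
$$
\phi_*\omega_Y(G)\simeq \omega_X\otimes\phi_*\sO_Y\Big(\tsum (a_i+1)E_i\Big).
$$

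The heart of the argument is then the following elementary normality lemma, which I would prove directly: for any exceptional divisor $D=\sum d_iE_i$ with $d_i\in\bZ$ on $Y$, one always has $\phi_*\sO_Y(D)\subseteq\sO_X$, since a rational function regular off the image of the exceptional locus (of codimension $\geq 2$) extends by normality; meanwhile the reverse inclusion $\sO_X\subseteq\phi_*\sO_Y(D)$ holds if and only if the local generator $1$ lies in $\phi_*\sO_Y(D)$, that is, if and only if $D\geq 0$. Applying this with $D=\sum(a_i+1)E_i$ shows that $\phi_*\omega_Y(G)\to\omega_X$ is an isomorphism precisely when $a_i+1\geq 0$ for every $i$, i.e.\ when $a_i\geq -1$ for all $i$, which by \eqref{def:lc-sing} says exactly that $X$ is log canonical; both implications are then obtained at once.

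I expect the only genuine subtlety, rather than a real obstacle, to be the bookkeeping around resolution-independence: \eqref{def:lc-sing} quantifies over all resolutions while \eqref{ThmCMDuBoisCriterion} fixes one. Since log canonicity may be tested on a single log resolution, and since $\phi_*\omega_Y(G)$ is independent of the chosen log resolution, it suffices to verify the equivalence on the fixed $\phi$ above, after which the two notions match on the nose. Everything else is the projection formula and the normality lemma, both routine.
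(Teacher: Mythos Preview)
Your proposal is correct and follows essentially the same approach as the paper: both reduce via \eqref{ThmCMDuBoisCriterion} to the statement that $\phi_*\omega_Y(G)\simeq\omega_X$, then use the projection formula (legitimate since $\omega_X$ is a line bundle) to rewrite this as $\phi_*\sO_Y\big(\sum(a_i+1)E_i\big)\simeq\sO_X$, which is equivalent to $a_i\geq -1$ for all $i$. The paper phrases the intermediate step as ``$X$ is log canonical if and only if $\phi_*\omega_{Y/X}(G)\simeq\sO_X$'' and is considerably terser, but your more explicit treatment of the normality lemma and the resolution-independence bookkeeping is the same argument unpacked.
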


\begin{proof}
  $X$ is easily seen to be log canonical if and only if $\phi_* \omega_{Y/X}(G) \simeq
  \sO_X$.  The projection formula then completes the proof.
\end{proof}

In fact, a slightly jazzed up version of this argument can be used to show that every
Cohen-Macau\-lay log canonical pair is DB:

\begin{skcor}[{\cite[3.16]{KSS10}}]
  CM log canonical singularities are DB.
\end{skcor}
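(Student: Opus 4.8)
The plan is to verify the criterion of \eqref{ThmCMDuBoisCriterion}. Since a log canonical variety is normal and $X$ is assumed Cohen--Macaulay, it suffices to exhibit, for one (equivalently any) log resolution $\phi\colon Y\to X$ with reduced simple normal crossing exceptional divisor $G=\union E_i$, an isomorphism $\phi_*\omega_Y(G)\simeq\omega_X$; the question is local, so I may shrink $X$ freely. The inclusion is the easy half: choosing $\phi$ to be an isomorphism over the smooth locus $U\subseteq X$, whose complement has codimension $\geq 2$ by normality, the line bundle $\omega_Y(G)$ restricts to $\omega_U$ there, and since $\omega_Y(G)$ is torsion free so is $\phi_*\omega_Y(G)$; hence the natural map identifies it with a subsheaf $\phi_*\omega_Y(G)\subseteq\iota_*\omega_U=\omega_X$ agreeing with $\omega_X$ away from codimension $\geq 2$. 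As $\omega_X$ is $S_2$ (it is the dualizing sheaf of the Cohen--Macaulay $X$), the whole content of the statement is the reverse inclusion, equivalently the $S_2$-ness of this subsheaf, and this is where log canonicity must enter.

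The mechanism I would use to feed in log canonicity is the index--$1$ cover, i.e. the ``jazzed up'' form of the computation inside the proof of \eqref{CorGorLCImpliesDuBois}. Let $m$ be the index of the $\bQ$-line bundle $\omega_X$ and let $\pi\colon\wt X\to X$ be the associated cyclic cover of degree $m$: it is finite, \'etale in codimension one, $\wt X$ is normal with $\omega_{\wt X}$ an honest line bundle, and $K_{\wt X}=\pi^*K_X$, so $\wt X$ is again log canonical. Because $\charact k=0$, the inclusion $\sO_X\to\pi_*\sO_{\wt X}$ splits via $\tfrac1m$ times the trace. On $\wt X$ the Gorenstein computation runs verbatim: for a log resolution $\phi'\colon\wt Y\to\wt X$ with reduced exceptional $\wt G$ one has $K_{\wt Y}+\wt G\sim\phi'^*K_{\wt X}+\sum(a_i+1)\wt E_i$ with all $a_i+1\geq 0$, whence $\omega_{\wt Y}(\wt G)\simeq\phi'^*\omega_{\wt X}\otimes\sO_{\wt Y}(\sum(a_i+1)\wt E_i)$ with effective exceptional twist, and the projection formula (legitimate because $\omega_{\wt X}$ is locally free) gives $\phi'_*\omega_{\wt Y}(\wt G)\simeq\omega_{\wt X}$. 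I stress that this step needs neither that $\wt X$ be Cohen--Macaulay nor the full corollary \eqref{CorGorLCImpliesDuBois}, only the locally free computation.

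The hard part will be descending this equality back to $X$. Here I would exploit that $\pi$ is \'etale in codimension one, so that $\omega_{\wt X}\simeq(\pi^*\omega_X)^{**}$ and, compatibly, the resolution-independent submodule $\phi_*\omega_Y(G)\subseteq\omega_X$ pulls back into $\phi'_*\omega_{\wt Y}(\wt G)\subseteq\omega_{\wt X}$, together with the splitting of $\sO_X\to\pi_*\sO_{\wt X}$ to realize both $\omega_X$ and the subsheaf $\phi_*\omega_Y(G)$ as direct summands of their pushforwards from $\wt X$. Since the corresponding inclusion upstairs is an equality and $\omega_X$ is $S_2$, the cokernel $\omega_X/\phi_*\omega_Y(G)$, supported in codimension $\geq 2$, becomes a summand of a sheaf that vanishes, forcing $\phi_*\omega_Y(G)=\omega_X$. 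Making this compatibility precise---that forming $\phi_*\omega_Y(G)$ commutes with the index--one cover up to reflexive hull---is the main obstacle, and it is exactly where the $S_2$ hypothesis on $X$ and the \'etale-in-codimension-one property of $\pi$ are both indispensable. With the equality in hand, \eqref{ThmCMDuBoisCriterion} gives that $X$ is Du~Bois.

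A more formal endgame would instead invoke the Splitting Principle directly, avoiding the sheaf descent: once $\wt X$ is known to be Du~Bois, functoriality of the Deligne--Du~Bois complex \eqref{defDB} supplies $\DuBois X\to\myR\pi_*\DuBois{\wt X}\qis\pi_*\sO_{\wt X}$, whose composite with $\sO_X\to\DuBois X$ is the natural map $\sO_X\to\pi_*\sO_{\wt X}$; composing with the trace splitting then produces a left inverse of $\sO_X\to\DuBois X$, and \eqref{thm:db-crit} concludes. The drawback of this route is that deducing ``$\wt X$ is Du~Bois'' from \eqref{CorGorLCImpliesDuBois} would require $\wt X$ to be Gorenstein, i.e. Cohen--Macaulay, and the index--one cover of a Cohen--Macaulay log canonical variety need not be Cohen--Macaulay. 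This is precisely why I would favor descending the sheaf equality $\phi_*\omega_Y(G)=\omega_X$ directly, since that argument never passes through the Cohen--Macaulayness or the Du~Bois property of $\wt X$.
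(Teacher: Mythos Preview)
Your setup is right: reduce to the criterion of \eqref{ThmCMDuBoisCriterion} and establish the inclusion $\phi_*\omega_Y(G)\subseteq\omega_X$. But the detour through the index-$1$ cover is both unnecessary and, as you yourself concede, incomplete: the ``main obstacle'' of descending the equality $\phi'_*\omega_{\wt Y}(\wt G)=\omega_{\wt X}$ back to $X$ is never actually carried out, and your alternative splitting-principle route is (correctly) abandoned because $\wt X$ need not be Cohen--Macaulay. So as written neither branch closes.

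The ``slightly jazzed up'' argument the paper intends is far more modest and needs no cover. The only place the Gorenstein hypothesis was used in the proof of \eqref{CorGorLCImpliesDuBois} was the projection formula; one replaces it by a direct $\bQ$-divisor computation. Fix a rational top-form $\eta$ and set $K_X=\opdiv_X(\eta)$, $K_Y=\opdiv_Y(\phi^*\eta)$, so that $K_Y=\phi^*K_X+\sum a_iE_i$ with all $a_i\geq -1$. A local section $s\in\omega_X$ is $f\eta$ with $D\leteq\opdiv_X(f)+K_X\geq 0$; since $K_X$ is $\bQ$-Cartier and $\opdiv_X(f)$ is principal, $D$ is an effective $\bQ$-Cartier divisor, hence $\phi^*D\geq 0$. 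Then
\[
\opdiv_Y(\phi^*s)+G=\phi^*\opdiv_X(f)+K_Y+G=\phi^*D+\tsum(a_i{+}1)E_i\geq 0,
\]
so $\phi^*s\in\omega_Y(G)$. This yields $\omega_X\subseteq\phi_*\omega_Y(G)$, hence equality, and now \eqref{ThmCMDuBoisCriterion} (which is where the Cohen--Macaulay hypothesis is actually consumed) finishes the proof.
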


We will see below that it is actually not necessary to assume CM in the previous
theorem. However, the characterization of DB singularities in
\eqref{ThmCMDuBoisCriterion} is still useful on its own.

\begin{skthm}[\protect{\cite[1.6]{KK10} ({\sc Splitting theorem III})}]
  \label{thm:db-criterion}
  Let ${\phi}: Y\to X$ be a proper 
  morphism between reduced schemes of finite type over $\bC$.
  Let $W\subseteq X$ be a closed reduced subscheme with ideal sheaf $\sI_{W\subseteq
    X}$ and $F\leteq {\phi}^{-1}(W)\subset Y$ with ideal sheaf $\sI_{F\subseteq Y}$.
  Assume that the natural map $\varrho$
  $$
  \xymatrix{ \sI_{W\subseteq X} \ar[r]_-\varrho & \myR{\phi}_*\sI_{F\subseteq Y}
    \ar@{-->}@/_1.5pc/[l]_{\varrho'} }
  $$
  admits a left inverse $\varrho'$, that is,
  $\varrho'\circ\varrho=\id_{\sI_{W\subseteq X}}$. Then if $Y,F$, and $W$ all have DB
  singularities, then so does $X$.
\end{skthm}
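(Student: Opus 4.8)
The plan is to show that $X$ has Du~Bois singularities by proving that the natural map $\sO_X\to\DuBois X$ from (\ref{defDB}.\ref{item:dR-to-DB}) is a quasi-isomorphism, and to arrange this through the relative Deligne--Du~Bois complexes of the two pairs $(X,W)$ and $(Y,F)$. Recall that for a reduced closed subscheme $\Sigma\subseteq V$ the complex of the pair, $\DuBois{V,\Sigma}$, is built so as to sit in a distinguished triangle $\DuBois{V,\Sigma}\to\DuBois V\to\DuBois\Sigma$ compatibly with $0\to\sI_{\Sigma\subseteq V}\to\sO_V\to\sO_\Sigma\to 0$, and that there is a natural map $\sI_{\Sigma\subseteq V}\to\DuBois{V,\Sigma}$; one calls the pair Du~Bois when this map is a quasi-isomorphism. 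The strategy is: (i) observe that $(Y,F)$ is a Du~Bois pair; (ii) transport the left inverse $\varrho'$ to the pair level to deduce that $(X,W)$ is a Du~Bois pair; and (iii) feed this into the triangle of $(X,W)$ together with the hypothesis that $W$ is Du~Bois to conclude that $X$ is Du~Bois.

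For step (i) I would place the two triangles side by side,
$$
\xymatrix{
\sI_{F\subseteq Y}\ar[r]\ar[d] & \sO_Y\ar[r]\ar[d]^{\qis} & \sO_F\ar[d]^{\qis}\\
\DuBois{Y,F}\ar[r] & \DuBois Y\ar[r] & \DuBois F
}
$$
whose middle and right verticals are quasi-isomorphisms precisely because $Y$ and $F$ have Du~Bois singularities. The triangulated nine-lemma (see \ref{app:B}) then forces $\sI_{F\subseteq Y}\to\DuBois{Y,F}$ to be a quasi-isomorphism, so $(Y,F)$ is a Du~Bois pair and, applying $\myR\phi_*$, the map $\myR\phi_*\sI_{F\subseteq Y}\to\myR\phi_*\DuBois{Y,F}$ is a quasi-isomorphism as well.

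For step (ii), functoriality of the Deligne--Du~Bois complex (\ref{defDB}.\ref{functorial}), applied to the morphism of pairs $\phi\colon(Y,F)\to(X,W)$, produces a natural map $\DuBois{X,W}\to\myR\phi_*\DuBois{Y,F}$, and compatibility of the natural maps $\sI_\bullet\to\DuBois{\bullet}$ with pullback makes the square
$$
\xymatrix{
\sI_{W\subseteq X}\ar[r]^-{\varrho}\ar[d] & \myR\phi_*\sI_{F\subseteq Y}\ar[d]^{\qis}\\
\DuBois{X,W}\ar[r] & \myR\phi_*\DuBois{Y,F}
}
$$
commute, the right vertical being the quasi-isomorphism of step (i). Inverting that right vertical and composing with $\varrho'$ gives a morphism $\myR\phi_*\DuBois{Y,F}\to\sI_{W\subseteq X}$, and precomposing with the bottom map yields a left inverse of the natural map $\sI_{W\subseteq X}\to\DuBois{X,W}$ (the verification $g\circ(\text{nat})=\varrho'\circ\varrho=\id$ is immediate from commutativity). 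Invoking the pair version of the splitting criterion \eqref{thm:db-crit} — that such a map admitting a left inverse must be a quasi-isomorphism — I conclude that $(X,W)$ is a Du~Bois pair.

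Finally, for step (iii), I would run the nine-lemma once more on the triangle of $(X,W)$,
$$
\xymatrix{
\sI_{W\subseteq X}\ar[r]\ar[d]^{\qis} & \sO_X\ar[r]\ar[d] & \sO_W\ar[d]^{\qis}\\
\DuBois{X,W}\ar[r] & \DuBois X\ar[r] & \DuBois W
}
$$
where the left vertical is a quasi-isomorphism by step (ii) and the right one because $W$ is Du~Bois; hence the middle map $\sO_X\to\DuBois X$ is a quasi-isomorphism, so $X$ has Du~Bois singularities. The formal ingredients — the two nine-lemma arguments and functoriality — are routine; the genuine obstacle is the pair version of \eqref{thm:db-crit} used in step (ii). This is where the Hodge-theoretic content enters: one must show that the rigidity behind the Splitting Principle persists for the relative complex $\DuBois{X,W}$, so that a left inverse cannot arise ``accidentally'' but only when the map is already a quasi-isomorphism. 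Establishing this relative splitting statement, through the $E_1$-degeneration in (\ref{defDB}.\ref{item:Hodge}) and the weight filtration on the Deligne--Du~Bois complex of a pair, is the technical heart of the argument.
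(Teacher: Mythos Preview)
The paper does not actually supply a proof of this statement: it is quoted as \cite[1.6]{KK10} and the surrounding text only remarks that ``a somewhat more general version of this was proved in \cite{Kovacs11c}'' before using it as input to the next theorem. So there is no in-paper argument to compare against.

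That said, your outline is essentially the strategy of the original source. The three-step plan---pass to the Du~Bois complex of the pairs $(Y,F)$ and $(X,W)$, use the nine-lemma twice, and reduce everything to a \emph{pair} version of the splitting criterion \eqref{thm:db-crit}---is exactly how \cite{KK10} organizes the proof, and you have correctly isolated the genuine content: the statement that if the natural map $\sI_{W\subseteq X}\to\DuBois{X,W}$ admits a left inverse then it is a quasi-isomorphism. This relative splitting lemma is not available in the present paper (the pair theory is only alluded to via \cite{Kovacs10a}), and its proof in \cite{KK10} does require the Hodge-theoretic input you describe, namely the $E_1$-degeneration and compatibility of the filtrations for the pair complex. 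Your sketch is therefore accurate as a roadmap, with the caveat that the step you flag as ``the technical heart'' is indeed a separate theorem whose proof is of comparable length to everything else combined; it is not a routine adaptation of \eqref{thm:db-crit}.
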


A somewhat more general version of this was proved in \cite{Kovacs11c}.

This criterion forms the cornerstone of the proof of the following theorem:

\begin{skthm}[\protect{\cite[1.5]{KK10}}]\label{thm:main}
  Let $\phi:{{Y}}\to {{X}}$ be a proper surjective morphism with connected fibers
  between normal varieties. Assume that 
  $Y$ 
  has log canonical singularities and $K_{{Y}}\sim_{\bQ,\phi} 0$.  Then $X$ is DB.
\end{skthm}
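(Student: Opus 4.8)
The plan is to obtain the conclusion from Splitting theorem~III \eqref{thm:db-criterion}, verifying its hypotheses by induction on $\dim Y$. Note first that taking $\phi=\id$ in \eqref{thm:main} gives exactly ``log canonical implies DB'', so the statement to be proved already contains Koll\'ar's conjecture; the inductive hypothesis will therefore supply, in every strictly smaller dimension, both the relative statement and the DB-ness of lower-dimensional log canonical strata. Two preliminary reductions are recorded at the outset: since $\phi$ is proper surjective with connected fibres and $X$ is normal, one has $\phi_*\sO_Y\simeq\sO_X$; and since $Y$ is log canonical, its non-klt locus $V\subset Y$ is a proper closed subset, so $\dim V<\dim Y$, while on $Y\setminus V$ the variety is klt, hence has rational \eqref{c-elkik} and in particular DB \eqref{cor:rtl-DB} singularities.

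First I would prove that $Y$ itself is DB. Let $g\col Y'\to Y$ be a log resolution and set $V'=g^{-1}(V)_{\mathrm{red}}$, an snc divisor, hence DB. The non-klt locus $V$ is a union of log canonical centres of dimension $<\dim Y$, so it is DB by the inductive hypothesis. One then checks that the log canonical condition (discrepancies $\ge-1$, cf.\ \eqref{def:lc-sing}) forces the natural map $\sI_{V\subseteq Y}\to\myR g_*\sI_{V'\subseteq Y'}$ to admit a left inverse; \eqref{thm:db-criterion} then yields that $Y$ is DB.

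Next I would feed $\phi$ itself into the same machine. Choose $W\subseteq X$ to be a union of images of log canonical centres of $Y$ arranged so that $\dim W<\dim X$, and put $F=\phi^{-1}(W)$. By the inductive hypothesis applied to the restricted fibrations, $W$ and $F$ are DB, and $Y$ is DB by the previous step. It remains to produce a left inverse of
$$
\varrho\col \sI_{W\subseteq X}\longrightarrow \myR\phi_*\sI_{F\subseteq Y}.
$$
Here is where $K_Y\sim_{\bQ,\phi}0$ enters. Dualising $\varrho$ by $\myR\sHom_X(\blank,\omega_X^\mydot)$ and applying Grothendieck duality \eqref{thm:GD} converts the target into a pushforward of the relative dualizing complex of $Y$; relative $\bQ$-triviality identifies the relevant cohomology sheaf with the pushforward of a relatively trivial pluricanonical sheaf, and Koll\'ar-type torsion-freeness together with relative Grauert--Riemenschneider vanishing \eqref{GR} then exhibit a trace section splitting that pushforward. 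Transporting this section back through the duality gives $\varrho'$ with $\varrho'\circ\varrho=\id$, and \eqref{thm:db-criterion} delivers that $X$ is DB.

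The main obstacle is the construction of $\varrho'$ in the last step, together with the bookkeeping that makes the induction close. One must see that log canonicity and $K_Y\sim_{\bQ,\phi}0$ are \emph{exactly} the inputs that make the trace map split the relevant higher direct image, and one must choose $W$ so that all log canonical centres are matched on the two sides while keeping $\dim W<\dim X$ (the genuinely delicate point when lc centres dominate $X$). Carrying the duality computation through despite $X$ being possibly non-$\bQ$-Gorenstein and $Y$ possibly non-Cohen--Macaulay \eqref{ex:lc-not-CM} is the technical heart; the remaining manipulations are formal in the derived category.
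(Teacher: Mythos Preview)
The paper does not actually prove this theorem; immediately after the statement it says ``For the proofs and more general statements, please see \cite{KK10}.'' So there is no in-paper proof to compare against, only the indication that \eqref{thm:db-criterion} ``forms the cornerstone of the proof''. Your overall architecture---induction on dimension, feed a suitable morphism into Splitting theorem~III, and manufacture the left inverse by a duality/trace argument exploiting $K_Y\sim_{\bQ,\phi}0$---does match the strategy of \cite{KK10}.

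That said, there is a genuine gap in your execution. The crucial missing ingredient is the \emph{dlt modification}: in \cite{KK10} one does not work directly with a log resolution $g\colon Y'\to Y$ and the non-klt locus $V$, but rather runs the MMP to produce a $\bQ$-factorial dlt model. On a dlt model the non-klt locus is (the support of) a reduced snc divisor, so it is visibly DB, and adjunction for dlt pairs puts the components and their intersections back into the inductive framework. Your step ``the non-klt locus $V$ is a union of log canonical centres of dimension $<\dim Y$, so it is DB by the inductive hypothesis'' is exactly where this matters: a log canonical centre need not itself be normal or log canonical without invoking subadjunction, and a \emph{union} of DB subschemes is not automatically DB. Likewise, your construction of the left inverse (``one then checks that the log canonical condition forces\dots'') is where all the content lives; in \cite{KK10} this step uses Koll\'ar's torsion-freeness and the specific structure of the dlt model, not merely the discrepancy inequality on a log resolution. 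Without the dlt modification, neither the DB-ness of $V$ nor the splitting of $\varrho$ is established, and the induction does not close.
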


\begin{skcor}[\protect{\cite[1.4]{KK10}}]
  Log canonical singularities are DB.
\end{skcor}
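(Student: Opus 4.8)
The plan is to deduce the statement as an immediate special case of Theorem~\ref{thm:main}, the third splitting theorem. Let $X$ be a variety with log canonical singularities. By Definition~\ref{def:lc-sing} this means in particular that $X$ is normal and $K_X$ is $\bQ$-Cartier, so $X$ is exactly the kind of object to which Theorem~\ref{thm:main} can be applied. The key observation is that Theorem~\ref{thm:main} already applies to the identity morphism, so no model-building is needed here.

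Concretely, I would set $Y=X$ and $\phi=\mathrm{id}_X\colon X\to X$. This morphism is proper, surjective, and has connected fibers (each fiber is a single reduced point), and it is a morphism between normal varieties because $X$ is normal. By hypothesis $Y=X$ has log canonical singularities, so that requirement of Theorem~\ref{thm:main} is satisfied tautologically. It remains to check that $K_Y\sim_{\bQ,\phi}0$; but relative $\bQ$-linear equivalence over $\mathrm{id}_X$ only asks that $K_Y-\phi^*D\sim_{\bQ}0$ for some $\bQ$-Cartier divisor $D$ on $X$, and one takes $D=K_X$. All hypotheses of Theorem~\ref{thm:main} then hold, and its conclusion is precisely that $X$ is DB.

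I expect essentially no obstacle at the level of this corollary: once Theorem~\ref{thm:main} is granted, the argument is exactly the verification above. All of the difficulty is packaged inside Theorem~\ref{thm:main} itself, whose proof rests on the splitting criterion of Theorem~\ref{thm:db-criterion}. There one must, for a suitably chosen resolution, produce the ideal-sheaf morphism $\varrho\colon\sI_{W\subseteq X}\to\myR\phi_*\sI_{F\subseteq Y}$ together with a left inverse $\varrho'$, while simultaneously controlling the DB property of $Y$, $F$, and $W$ (typically via an induction on dimension that cuts down along the non-klt and non-normal loci). That bookkeeping, rather than the passage to the present corollary, is the genuinely hard part.

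I should add one caveat. The slickness of the identity-map argument is a feature of the boundary-free formulation used here. The full strength of the method, namely treating log canonical \emph{pairs} $(X,\Delta)$ where one must deduce the DB property of $X$ in the presence of a nonzero boundary, no longer reduces to the identity, since a crepant model of $(X,\Delta)$ generally carries a nonzero $\Delta_Y$ and hence fails $K_Y\sim_{\bQ,\phi}0$. In that generality one would have to feed a genuinely nontrivial morphism, extracted from the minimal model program, into a pair-version of Theorem~\ref{thm:main}. For the statement as phrased, however, the identity morphism suffices and the proof is complete.
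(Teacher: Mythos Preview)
Your proposal is correct and is precisely the intended deduction: the paper places this statement as an immediate corollary of Theorem~\ref{thm:main} and defers all substance to \cite{KK10}, so applying Theorem~\ref{thm:main} with $\phi=\id_X$ is exactly the step being asked for. Your caveats about the pair case and about the real work living inside Theorem~\ref{thm:db-criterion} are accurate and well placed.
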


\noindent
For the proofs and more general statements, please see \cite{KK10}.  Also, note that
this statement holds in a more general situation, namely it is in fact true that
already semi-log canonical singularities are DB. This is proved in \cite{SingBook}.

\begin{sksubrem}
  Notice that in \eqref{thm:db-criterion} it is not required that ${\phi}$ be
  birational.  On the other hand the assumptions of the theorem and
  \cite[Thm~1]{Kovacs00b} imply that if $Y\setminus F$ has rational singularities,
  e.g., if $Y$ is smooth, then $X\setminus W$ has rational singularities as well.
\end{sksubrem}

This theorem is used in \cite{KK10} to derive various consequences, some of which are
formally unrelated to DB singularities. I will mention some of these in the sequel,
but the interested reader should look at the original article to obtain the full
picture.

\section{Stable families}\label{sec:stable-families}

The connection between log canonical and DB singularities has many useful
applications in moduli theory. I list a few below without proof.

\begin{skthm}[\protect{\cite[7.8,7.9,7.13]{KK10}}]\label{thm:CM-and-all-that}
  Let ${{f}}:X\to B$ be a flat projective morphism of complex varieties with $B$
  connected and such that $X_b$ has log canonical singularities for all $b\in B$.
  Then
  \begin{enumerate}
  \item 
    \label{thm:coh-inv}
    $h^i(X_b, \sO_{X_b})$ is independent of $b\in B$ for all $i$.
  \item 
    \label{thm:cm-inv}
    If one fiber of ${{f}}$ is Cohen-Macau\-lay, then all fibers are Cohen-Macau\-lay.
  \item 
    \label{thm:flat}
    The cohomology sheaves $h^{i}(\omega_{{f}}^\mydot)$ are flat over $B$, where
    $\omega_{{f}}^\mydot$ denotes the {relative dualizing complex} of ${{f}}$.
  \end{enumerate}
\end{skthm}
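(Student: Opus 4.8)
The plan is to handle the three assertions in order of increasing depth, deriving \eqref{thm:coh-inv} immediately, carrying out the real work in \eqref{thm:flat}, and then obtaining \eqref{thm:cm-inv} as a formal consequence. For the first part, observe that since each fiber $X_b$ has log canonical singularities, it has Du~Bois singularities by the corollary to \eqref{thm:main} (log canonical singularities are Du~Bois). As $f$ is flat and projective, hence proper, and $B$ is connected, the hypotheses of \eqref{cor:coh-inv} are exactly met, and that corollary yields that $h^i(X_b,\sO_{X_b})$ is independent of $b$ for every $i$. So \eqref{thm:coh-inv} requires nothing beyond assembling what is already available.

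Next I would set up the duality framework for the remaining parts. Let $\omega_f^\mydot$ be the relative dualizing complex of $f$ and let $d$ be the relative dimension. Because $f$ is flat, the formation of the dualizing complex is compatible with restriction to a fiber, so that $\myL\iota_b^*\omega_f^\mydot\qis\omega_{X_b}^\mydot$, where $\iota_b\colon X_b\into X$ is the inclusion; this is a standard base-change property following from Grothendieck duality \eqref{thm:GD} together with the Tor-independence of the base-change square. Applying \eqref{thm:GD} instead to the structure map $X_b\to\Spec\bC$ identifies $\bH^i(X_b,\omega_{X_b}^\mydot)$ with the $\bC$-dual of $H^{-i}(X_b,\sO_{X_b})$. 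Thus the constancy obtained in \eqref{thm:coh-inv} translates into the statement that $\dim_\bC\bH^i(X_b,\omega_{X_b}^\mydot)$ is independent of $b$ for all $i$.

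The heart of the matter is \eqref{thm:flat}. Here I would run a cohomology-and-base-change analysis of the complex $\omega_f^\mydot$, proceeding by induction through its cohomology sheaves $h^i(\omega_f^\mydot)$. The constancy of the fiberwise (hyper)cohomology dimensions recorded above is precisely the numerical input that forces the relevant base-change maps to be isomorphisms, in the spirit of Grauert's theorem; this in turn delivers both the flatness of each $h^i(\omega_f^\mydot)$ over $B$ and the base-change identification $h^i(\omega_f^\mydot)\otimes_{\sO_B}k(b)\simeq h^i(\omega_{X_b}^\mydot)$. I expect this to be the main obstacle: unlike the classical theorem on cohomology and base change, which governs a single sheaf, here one must control an entire complex, so the individual cohomology sheaves interact through the base-change spectral sequence, and the constancy must be fed in uniformly at each inductive stage rather than exploited all at once.

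Finally, \eqref{thm:cm-inv} follows formally. By the base-change identification, $X_b$ is Cohen--Macaulay if and only if $h^i(\omega_f^\mydot)\otimes_{\sO_B}k(b)=0$ for all $i\neq -d$ (cf.\ \eqref{rem:CM-and-Gor}), that is, if and only if $X_b$ meets none of the supports $\supp h^i(\omega_f^\mydot)$ with $i\neq -d$. Now each such $h^i(\omega_f^\mydot)$ is flat over $B$ by \eqref{thm:flat}; working over an irreducible component of $B$, flatness forces every associated point of the sheaf to lie over the generic point of that component, so a nonzero $h^i(\omega_f^\mydot)$ has support dominating the component and hence, by properness of $f$, surjecting onto it. Consequently such a sheaf either vanishes identically over the component or is nonzero on every fiber there; in the former case all fibers over the component are Cohen--Macaulay, in the latter none are. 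Since $B$ is connected, a single Cohen--Macaulay fiber propagates across the overlapping components, so that every fiber is Cohen--Macaulay.
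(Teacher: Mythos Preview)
Your proposal is correct and follows the same path as the paper: reduce to the Du~Bois case via the corollary to \eqref{thm:main}, obtain \eqref{thm:coh-inv} from \eqref{cor:coh-inv}, then establish the flatness and base-change for $h^i(\omega_f^\mydot)$ (which the paper packages as the separately stated \eqref{thm:coh-base-change}) and deduce \eqref{thm:cm-inv} from it. Your duality-plus-induction sketch for \eqref{thm:flat} is indeed the strategy behind \eqref{thm:coh-base-change}; the one device you do not mention explicitly is the twist by a relatively ample line bundle, which is what converts the constancy of the global hypercohomology into control over the individual cohomology sheaves.
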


For arbitrary flat, proper morphisms, the set of fibers that are Cohen-Macau\-lay is
open, but not necessarily closed. Thus the key point of
(\ref{thm:CM-and-all-that}.\ref{thm:cm-inv}) is to show that this set is also closed.

The generalization of these results to the semi log canonical case turns out to be
straightforward, but it needs some foundational work to extend some of the results
used here to the semi log canonical case.  This is done in \cite{Moduli_Book}.  The
general case then implies that each connected component of the moduli space of stable
log varieties parameterizes either only Cohen-Macau\-lay or only non-Cohen-Macau\-lay
objects.

Notice that this still does not mean that one should abandon the non-Cohen-Macau\-lay
objects. There exists smooth projective varieties of general type whose log canonical
model is not Cohen-Macau\-lay and one should naturally prefer to have a moduli space
that includes these. Nevertheless, it is very useful to know that if the general
fiber is Cohen-Macau\-lay, then so is the special fiber.

\eqref{thm:CM-and-all-that} is proved using \eqref{thm:main}, \eqref{cor:coh-inv} and
the following theorem. Before I can state that theorem I need a simple definition.
Let ${{f}}:X\to B$ be a flat morphism.  One says that ${{f}}$ is a \emph{DB family}
if $X_b$ is DB for all $b\in B$.

\newcommand{\clx}[1]{{#1}^{\bdot}}
\newcommand\omegai{h^{-i}(\clx{\omega}_{{{f}}})}
\newcommand\omegait{h^{-i}(\clx{\omega}_{{{f}}_T})}

\begin{skthm}[{\cite[7.9]{KK10}}]\label{thm:coh-base-change}
  Let ${{f}}:X\to B$ be a projective DB family and $\sL$ a relatively ample line
  bundle on $X$.  Then
  \begin{enumerate}
  \item the sheaves $\omegai$ are flat over $B$ for all $i$, 
  \item the sheaves ${{f}}_*(\omegai\otimes \sL^{\otimes q})$ are
    locally free and compatible with arbitrary base change for all $i$ and for all
    $q\gg 0$, and
  \item for any base change morphism $\vartheta: T\to B$ and for all $i$,
    $$
    \big(\omegai\big)_T\simeq \omegait.
    $$
  \end{enumerate}
\end{skthm}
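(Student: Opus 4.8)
The plan is to reduce all three assertions to the single statement that the cohomology sheaves $\omegai$ are flat over $B$, and then to extract that flatness from the Du~Bois hypothesis through Grothendieck duality combined with an ample twist.

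First I would localize on $B$: since $B$ is a smooth curve it suffices to work over a local ring $R=\sO_{B,b}$ with uniformizer $t$, so that $X_b={{f}}^{-1}(b)$ is the Cartier divisor $(\,{{f}}^*t=0\,)$. Derived base change for the relative dualizing complex always holds, giving $\clx{\omega}_{{{f}}_T}\qis\myL\vartheta_X^*\clx{\omega}_{{{f}}}$ for the induced morphism $\vartheta_X\colon X_T\to X$ attached to any $\vartheta\colon T\to B$. Granting (1), each $\omegai$ is $B$-flat, so the hyper-Tor spectral sequence collapses and $h^{-i}(\myL\vartheta_X^*\clx{\omega}_{{{f}}})=\vartheta_X^*\omegai=(\omegai)_T$; this is precisely (3). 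For (2), flatness makes $b\mapsto\chi(X_b,(\omegai)_b\otimes\sL^{\otimes q})$ locally constant, and for $q\gg0$ relative Serre vanishing kills the higher direct images, so $h^0(X_b,(\omegai)_b\otimes\sL^{\otimes q})$ is constant; Grauert's theorem then yields local freeness of ${{f}}_*(\omegai\otimes\sL^{\otimes q})$ and its compatibility with base change. Thus the whole theorem rests on (1).

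To prove flatness I would set up the duality engine. By hypothesis every fiber is Du~Bois, so \eqref{cor:coh-inv} gives that $h^i(X_b,\sO_{X_b})$ is independent of $b$; Grauert's theorem upgrades this to the statement that each $R^i{{f}}_*\sO_X$ is locally free and that $\myR{{f}}_*\sO_X$ commutes with arbitrary base change. Applying Grothendieck duality \eqref{thm:GD} to $\sfG=\sO_X$ identifies $\myR{{f}}_*\clx{\omega}_{{{f}}}$ with the derived dual $\myR\sHom_B(\myR{{f}}_*\sO_X,\sO_B)$, up to the harmless twist by $\omega_B$ and a shift coming from the smooth curve $B$. Since over a one-dimensional regular base a perfect complex with locally free cohomology is quasi-isomorphic to the direct sum of its shifted cohomology sheaves, this dual again has locally free cohomology and remains compatible with base change. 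This controls the hypercohomology $\myR{{f}}_*\clx{\omega}_{{{f}}}$ completely, but not yet the sheaves $\omegai$ living on $X$.

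The main work—and the main obstacle—is to descend from this statement about $\myR{{f}}_*$ on $B$ to the sheaves on $X$. I would do this by twisting with $\sL^{\otimes q}$ for $q\gg0$: relative Serre vanishing collapses the spectral sequence, so ${{f}}_*(\omegai\otimes\sL^{\otimes q})\simeq h^{-i}\bigl(\myR{{f}}_*(\clx{\omega}_{{{f}}}\otimes\sL^{\otimes q})\bigr)$, and the twisted form of \eqref{thm:GD}, namely $\myR{{f}}_*(\clx{\omega}_{{{f}}}\otimes\sL^{\otimes q})\qis\myR\sHom_B(\myR{{f}}_*\sL^{\otimes -q},\sO_B)$, reduces everything to the local freeness of $R^i{{f}}_*\sL^{\otimes -q}$, that is, to the constancy of $h^i(X_b,\sL^{\otimes -q}\resto{X_b})$ in $b$. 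This constancy does \emph{not} follow from the structure-sheaf statement alone, and it is here that the genuinely Hodge-theoretic content of the Du~Bois condition must be used. The idea is that the Du~Bois property passes to general members of $|\sL^{\otimes q}|$—the mechanism underlying \eqref{thm:naive-db}—so that cutting $X_b$ by such a general divisor $D_b$ and running the sequence
\begin{equation*}
0\to\sL^{\otimes -q}\resto{X_b}\to\sO_{X_b}\to\sO_{D_b}\to0
\end{equation*}
expresses the unknown $h^i(X_b,\sL^{\otimes -q})$ in terms of $h^i(X_b,\sO_{X_b})$ and $h^i(D_b,\sO_{D_b})$. Downward induction on $\dim X_b$, applying \eqref{cor:coh-inv} both to $\{X_b\}$ and to a flat family $\{D_b\}$ of Du~Bois sections, then forces the desired constancy. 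The delicate point I expect to be the crux is arranging the sections to be Du~Bois \emph{uniformly} over all $b\in B$, not merely generically, so that \eqref{cor:coh-inv} genuinely applies to the section family; this is exactly what makes the non-Cohen–Macaulay case substantive. Once this is in place, local freeness of $R^i{{f}}_*\sL^{\otimes -q}$ gives (2) by dualizing, local freeness of ${{f}}_*(\omegai\otimes\sL^{\otimes q})$ for all $q\gg0$ gives the flatness (1) by the relative-$\Proj$ criterion, and (3) follows as in the first paragraph.
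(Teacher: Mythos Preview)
The paper does not contain a proof of this theorem; it is merely quoted from \cite[7.9]{KK10} and then used to deduce \eqref{thm:CM-and-all-that} and \eqref{cor:base-change-for-omega}. So there is no proof here to compare your proposal against.

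That said, two remarks on your proposal itself. First, you open by writing ``since $B$ is a smooth curve'', but the statement as given in the paper imposes no such hypothesis on $B$; the definition of a DB family just above the theorem only requires ${{f}}$ to be flat with DB fibers. If you intend to reduce to the curve case you must justify that reduction (e.g.\ via the local criterion of flatness and restriction to curves through a given point), not assume it. Second, you correctly identify the genuine difficulty: producing a \emph{flat} family of Du~Bois hypersurface sections $\{D_b\}$ so that \eqref{cor:coh-inv} applies to it. A single general section of $\sL^{\otimes q}$ on $X$ will be DB on the general fiber but there is no a priori reason it remains DB on the special fiber, and this is exactly the non-CM obstruction you allude to. Your sketch does not indicate how to overcome this, so as written the argument has a gap at its acknowledged crux. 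If you want to see how this is actually handled, you will have to consult \cite{KK10} directly.
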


Let me emphasize a special case of this theorem. This had been known for families of
CM varieties, so for instance for stable families of relative dimension at most $2$.

\begin{skcor}\label{cor:base-change-for-omega}
  Let ${{f}}:X\to B$ be a weakly stable family. Then $\omega_{X/B}$ commutes with
  arbitrary base change.
\end{skcor}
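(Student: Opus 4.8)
The plan is to deduce this as the single cohomological degree $i=d$ of Theorem~\ref{thm:coh-base-change}, where $d$ is the relative dimension of $f$, once the DB hypothesis is in place. Since the assertion is local on $B$, I may assume $B$ connected, so that $d$ is constant by flatness. The first step is to check that a weakly stable family is a DB family. By Definition~\ref{def:weakly-stable-family} every fiber $X_b$ has semi-log canonical singularities, and semi-log canonical singularities are Du~Bois (\cite{SingBook}; in the normal case this is the corollary to \eqref{thm:main}). Hence $X_b$ is DB for all $b\in B$, which is exactly the condition that $f$ be a DB family.

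With that established I would line up the remaining hypotheses of Theorem~\ref{thm:coh-base-change}. As $f$ is projective there is a relatively ample line bundle $\sL$ on $X$, so the theorem applies verbatim to $f$. The only identification needed is that the relative canonical sheaf occurring in Definition~\ref{def:weakly-stable-family} is the top cohomology sheaf of the relative dualizing complex: writing $\omega_f^\mydot$ for the relative dualizing complex of $f$, one has
$$
\omega_{X/B}\simeq h^{-d}(\omega_f^\mydot),
$$
in exact analogy with the absolute identity $\omega_X\simeq h^{-d}(\omega_X^\mydot)$ recalled in \S\ref{ssec:canonical-sheaves}. This is precisely the sheaf $h^{-i}(\omega_f^\mydot)$ of Theorem~\ref{thm:coh-base-change} for $i=d$.

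It then remains only to read off the conclusion. For an arbitrary base change $\vartheta\colon T\to B$, writing $f_T\colon X_T\to T$ for the base-changed family and $(-)_T$ for pullback along $X_T=X\times_B T\to X$, part~(3) of Theorem~\ref{thm:coh-base-change} applied with $i=d$ yields
$$
\big(\omega_{X/B}\big)_T\simeq\big(h^{-d}(\omega_f^\mydot)\big)_T\simeq h^{-d}(\omega_{f_T}^\mydot)\simeq\omega_{X_T/T}.
$$
This is exactly the statement that $\omega_{X/B}$ commutes with arbitrary base change, so the corollary follows.

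The point to stress is that all of the difficulty is already absorbed into Theorem~\ref{thm:coh-base-change}; the deduction above is pure bookkeeping. The substance is the flatness and base-change compatibility of the cohomology sheaves of $\omega_f^\mydot$, which in turn rests on the semi-log canonical (hence DB) hypothesis via \eqref{thm:main} and the cohomological invariance \eqref{cor:coh-inv}. In relative dimension $\le 2$ the fibers are Cohen-Macaulay, $\omega_f^\mydot$ is concentrated in a single degree, and base change is classical; the genuine obstacle is the possibly non-Cohen-Macaulay fibers that appear once $\dim X_b\ge 3$ (cf.\ \eqref{ex:lc-not-CM}), for which base change for $\omega_{X/B}$ fails for general flat families and survives here only thanks to the DB condition.
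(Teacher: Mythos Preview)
Your proposal is correct and matches the paper's own reasoning: the corollary is presented there simply as ``a special case'' of Theorem~\ref{thm:coh-base-change}, and you have written out exactly the intended deduction---slc fibers are DB (via \cite{SingBook}, as the paper notes after \eqref{thm:main}), so $f$ is a DB family, and then part~(3) of Theorem~\ref{thm:coh-base-change} at $i=d$ together with the identification $\omega_{X/B}\simeq h^{-d}(\omega_f^\mydot)$ gives the claim.
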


For related results that show that this statement is sharp in a certain sense see
\cite{patakfalvi-base-change}.

\section{Deformations of DB singularities}\label{sec:deform-db-sing}

Given the importance of DB singularities in moduli theory it is a natural question
whether they are invariant under small deformation.

It is relatively easy to see from the construction of the Deligne-Du~Bois complex
that a general hyperplane section (or more generally, the general member of a base
point free linear system) on a variety with DB singularities again has DB
singularities. Therefore the question of deformation follows from the following.

\begin{skconj}[{\cite{SteenbrinkMixed}}] %
  \label{conj:DB-defo} %
  Let $D\subset X$ be a reduced Cartier divisor and assume that $D$ has only DB
  singularities in a neighborhood of a point $x\in D$. Then $X$ has only DB
  singularities in a neighborhood of the point $x$.
\end{skconj}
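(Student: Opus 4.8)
The plan is to reduce, via Schwede's characterization \eqref{EasyDuBoisCriterion}, to a statement about a single log resolution, and then to exploit the Cartier (line-bundle) structure of $D$ together with the triangulated $9$-lemma exactly as in \S\ref{sec:rtl}. The question is local, so I may assume $X$ is affine, that $x\in D$, and that $D$ is DB on all of $X$, and I embed $X\subseteq Z$ with $Z$ smooth. Choose a single proper birational $\phi\colon W\to Z$ with $W$ smooth that simultaneously log-resolves $X$ and $D$, so that both $Y\leteq\phi^{-1}(X)_{\rm red}$ and $Y_D\leteq\phi^{-1}(D)_{\rm red}$ are simple normal crossing divisors on $W$ (Hironaka). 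By \eqref{EasyDuBoisCriterion} one then has $\DuBois{X}\qis\myR\phi_*\sO_Y$ and $\DuBois{D}\qis\myR\phi_*\sO_{Y_D}$, compatibly with the natural maps $\sO_X\to\DuBois{X}$ and $\sO_D\to\DuBois{D}$. Writing $C$ for the mapping cone of $\sO_X\to\DuBois{X}$, the goal becomes to show $C\qis 0$ in a neighbourhood of $D$, since that is equivalent to $X$ being DB there by \eqref{def:db-sing}.

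The main device is to twist by the line bundle $\sO_X(-D)$ and compare via multiplication by a local equation $g$ of $D$. Tensoring the defining triangle $\sO_X\to\DuBois{X}\to C\xrightarrow{+1}$ with $\sO_X(-D)$ (an exact operation, as $\sO_X(-D)$ is invertible) and mapping it into the untwisted triangle by the three multiplication maps $\cdot g$ yields a morphism of distinguished triangles. Taking cones of the vertical arrows and invoking the triangulated $9$-lemma (see \ref{app:B}) produces a distinguished triangle
$$
\sO_D\rtarr T \rtarr C_D\xrightarrow{+1},
$$
where $T\leteq\mathrm{cone}\bigl(\DuBois{X}\otimes\sO_X(-D)\xrightarrow{\cdot g}\DuBois{X}\bigr)$ and $C_D\leteq\mathrm{cone}\bigl(C\otimes\sO_X(-D)\xrightarrow{\cdot g}C\bigr)$, using that $\mathrm{cone}(\sO_X(-D)\xrightarrow{\cdot g}\sO_X)\simeq\sO_D$ and that the first arrow is the natural one.

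The technical heart is the identification $T\qis\DuBois{D}$. Since $\sO_X(-D)$ is a line bundle, $T$ is canonically the derived restriction $\DuBois{X}\overset{L}{\otimes}\sO_D$, so the assertion is precisely that the Deligne--Du~Bois complex commutes with derived restriction to the Cartier divisor $D$, giving back $\DuBois{D}$. Translating through \eqref{EasyDuBoisCriterion} and the projection formula, $T\qis\myR\phi_*\bigl(\sO_Y\overset{L}{\otimes}\myL\phi^*\sO_D\bigr)$ while $\DuBois{D}\qis\myR\phi_*\sO_{Y_D}$; the comparison therefore amounts to showing that the nilpotent thickening of $Y_D=(\phi^*D)_{\rm red}$ inside $\phi^*D$, together with any higher Tor arising where exceptional components of $Y$ lie over $D$, is annihilated by $\myR\phi_*$. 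This is the step I expect to be the main obstacle: it is a relative Grauert--Riemenschneider/Kodaira-type vanishing, in the spirit of \eqref{GR}, and it is where the characteristic-zero Hodge-theoretic content of the Deligne--Du~Bois complex must enter in an essential way.

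Granting the key lemma $T\qis\DuBois{D}$, the argument closes quickly. Because $D$ is DB, the natural map $\sO_D\to\DuBois{D}\qis T$ is a quasi-isomorphism; since it is the first map of the triangle $\sO_D\to T\to C_D\xrightarrow{+1}$, we get $C_D\qis 0$. Hence $\cdot g\colon C\otimes\sO_X(-D)\to C$ is a quasi-isomorphism, so multiplication by $g$ induces an isomorphism on each cohomology sheaf $h^j(C)$. For every $j$ this gives $h^j(C)=g\cdot h^j(C)$, and localizing at any point $p\in D$, where $g\in\frm_p$, Nakayama's lemma forces $h^j(C)_p=0$. Thus $C\qis 0$ in a neighbourhood of $D$ and $X$ has only DB singularities near $x$. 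The one compatibility that still must be checked is that, under the identification $T\qis\DuBois{D}$, the map $\sO_D\to T$ coming from the $9$-lemma coincides with the natural map $\sO_D\to\DuBois{D}$; this follows from the functoriality of the Deligne--Du~Bois complex \eqref{defDB}.
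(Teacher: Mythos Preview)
This statement appears in the paper as a \emph{conjecture}, not as a theorem with a proof; the paper records partial cases (the Gorenstein normal case via inversion of adjunction in \eqref{thm:Gor-defo}, and the case where $X\setminus D$ is already DB) and notes that the full statement was established only in the separate paper \cite{Kovacs-Schwede11b}. So there is no proof in the present paper to compare against.

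Your reduction via the $9$-lemma and Nakayama is sound: if $C\overset{L}{\otimes}\sO_D\qis 0$ for the cone $C$ of $\sO_X\to\DuBois{X}$, then multiplication by $g$ is an isomorphism on each coherent $h^j(C)$, and Nakayama kills $C$ along $D$. The gap is exactly where you locate it, namely the ``key lemma'' $T\qis\DuBois{D}$; but this lemma is not a genuine reduction of the conjecture, it is \emph{equivalent} to it. Indeed, if $X$ is DB near $D$ then $\DuBois{X}\qis\sO_X$ there, whence $T\qis\sO_D\qis\DuBois{D}$ (using that $D$ is DB); your Nakayama argument gives the converse. The lemma is moreover false without the DB hypothesis on $D$---take $X=\bA^2$ smooth and $D$ a cusp, so that $T\qis\sO_D$ while $\DuBois{D}\not\qis\sO_D$---so it cannot be a formal base-change property of the Deligne--Du~Bois complex and must genuinely consume the hypothesis on $D$. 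Your diagnosis that the missing input is Hodge-theoretic is correct; in \cite{Kovacs-Schwede11b} it is supplied not by a Grauert--Riemenschneider-type vanishing but by an injectivity theorem for hypercohomology of $\DuBois{X}$, after which the argument does close with a Nakayama-type step in the same spirit as yours.
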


This conjecture was confirmed for isolated Gorenstein singularities by Ishii
\cite{Ishii86}.  Also note that rational singularities satisfy this property, see
\cite{Elkik78}.
\nocite{Kovacs99,Kovacs00c,Ishii85,Ishii87a,Ishii87b,KSS10,MR2644316}

One also has the following easy corollary of the results presented earlier:

\begin{skthm}\label{thm:Gor-defo}
  Assume that $X$ is Gorenstein and $D$ is normal.
  %
  Then the statement of \eqref{conj:DB-defo} is true.
\end{skthm}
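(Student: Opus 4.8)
The plan is to convert the Du~Bois question into a question about log canonicity and then apply \eqref{CorGorLCImpliesDuBois} at both ends. First I would observe that $D$ is Gorenstein: it is a reduced Cartier divisor in the Gorenstein (hence Cohen--Macaulay) scheme $X$, so $D$ is Cohen--Macaulay, and adjunction gives $\omega_D\simeq \omega_X(D)\resto{D}$, which is a line bundle. Combined with the hypotheses that $D$ is normal and DB near $x$, \eqref{CorGorLCImpliesDuBois} then shows that $D$ is log canonical in a neighbourhood of $x$.

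The next step is to promote $X$ from Gorenstein to normal near $x$. Being Gorenstein, $X$ is Cohen--Macaulay and in particular $S_2$, so by Serre's criterion only regularity in codimension $1$ is in question. This is forced by $D$: choosing a local equation $t\in\sO_{X,x}$ of the Cartier divisor $D$, the element $t$ is a nonzerodivisor and $\sO_{X,x}/(t)\simeq\sO_{D,x}$ is normal; since normality lifts through a nonzerodivisor, $\sO_{X,x}$ is normal. Hence $X$ is normal in a neighbourhood of $x$, and $K_X$, $D$ and $K_X+D$ are all genuine Cartier divisors.

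Now $X$ is normal and $D$ is a reduced Cartier divisor with $K_D\sim (K_X+D)\resto{D}$, and $D$ is log canonical. By inversion of adjunction for log canonicity (cf.\ \cite{KM98}; no correction term appears because $D$ is normal and Cartier) the pair $(X,D)$ is log canonical in a neighbourhood of $D$. Dropping the effective boundary only raises discrepancies: for every divisorial valuation $E$ one has $a(E;X)=a(E;X,D)+\mult_E(\phi^*D)\geq a(E;X,D)\geq -1$, so $X$ itself is log canonical near $x$. Since $X$ is Gorenstein and normal, \eqref{CorGorLCImpliesDuBois} converts this back into the DB property, and $X$ is DB near $x$, as claimed.

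The one genuinely substantial ingredient is the inversion-of-adjunction step, passing from log canonicity of $D$ to log canonicity of the pair $(X,D)$; the opposite implication (ordinary adjunction) is elementary, and in this normal Gorenstein setting the needed direction is classical and can be read off from the connectedness theorem. Everything else is bookkeeping: two applications of \eqref{CorGorLCImpliesDuBois}, the fact that a Cartier divisor in a Gorenstein scheme is Gorenstein, the lifting of normality through a nonzerodivisor, and the monotonicity of discrepancies under removal of an effective boundary.
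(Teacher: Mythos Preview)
Your proof is correct and follows essentially the same route as the paper's: show $D$ is Gorenstein and hence log canonical via \eqref{CorGorLCImpliesDuBois}, lift normality from $D$ to $X$, invoke inversion of adjunction to get $(X,D)$ log canonical, drop the boundary, and apply \eqref{CorGorLCImpliesDuBois} again. You supply more detail than the paper (which merely asserts that $D$ Gorenstein and $X$ normal along $D$ are obvious, and cites Kawakita \cite{MR2264806} for inversion of adjunction rather than the earlier connectedness-based argument you mention), but the logical skeleton is identical.
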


\begin{proof}
  The question is local so one may restrict to a neighborhood of $x$. %
  If $X$ is Gorenstein, then so is $D$ as it is a Cartier divisor. Then $D$ is log
  canonical by \eqref{CorGorLCImpliesDuBois}, and then $X$ is also log canonical by
  inversion of adjunction \cite{MR2264806}. (Recall that if $D$ is normal, then so is
  $X$ along $D$).  Therefore $X$ is also DB.
\end{proof}

\begin{skrem}
  It is claimed in \cite[3.2]{Kovacs00b} that the conjecture holds in full
  generality. Unfortunately, the proof published there is not complete. It works as
  long as one assumes that the non-DB locus of $X$ is contained in $D$. For instance,
  one may assume that this is the case if the non-DB locus is isolated.

  The problem with the proof is the following: it is stated that by taking hyperplane
  sections one may assume that the non-DB locus is isolated. However, this is
  incorrect. One may only assume that the \emph{intersection} of the non-DB locus of
  $X$ with $D$ is isolated. If one takes a further general section then it will miss
  the intersection point and then it is not possible to make any conclusions about
  that case.
\end{skrem}

Until very recently the best known result with regard to this conjecture had been the
following:

\begin{skthm}[{\cite[3.2]{Kovacs00b}}]
  Let $D\subset X$ be a reduced Cartier divisor and assume that $D$ has DB
  singularities in a neighborhood of a point $x\in D$ and that $X\setminus D$ has DB
  singularities.  Then $X$ has only DB singularities in a neighborhood of $x$.
\end{skthm}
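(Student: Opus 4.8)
The plan is to reduce the statement, via Schwede's characterization of Du~Bois singularities \eqref{EasyDuBoisCriterion}, to a quasi-isomorphism of ideal sheaves on a single well-chosen resolution, and then to feed this into one of the splitting criteria. Since the question is local, I would first restrict to a neighbourhood of $x$, embed $X$ into a smooth variety $Z$, and choose a proper birational $\phi\col W\to Z$ with $W$ smooth such that $Y\leteq\phi^{-1}(X)_{\rm red}$, $Y_D\leteq\phi^{-1}(D)_{\rm red}$, and their union are all simple normal crossing divisors, with $\phi$ an isomorphism over the smooth locus. By \eqref{EasyDuBoisCriterion}, $X$ is DB if and only if $\sO_X\to\myR\phi_*\sO_Y$ is a quasi-isomorphism, and likewise $D$ is DB if and only if $\sO_D\to\myR\phi_*\sO_{Y_D}$ is a quasi-isomorphism; the latter holds by hypothesis.

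Next I would exploit that $D$ is Cartier, so its ideal sheaf is the line bundle $\sO_X(-D)$, and set $F\leteq Y_D$ with ideal sheaf $\sI_{F\subseteq Y}$. There are two natural distinguished triangles,
\begin{gather*}
  \sO_X(-D)\to \sO_X\to \sO_D \xrightarrow{+1},\\
  \myR\phi_*\sI_{F\subseteq Y}\to \myR\phi_*\sO_Y\to \myR\phi_*\sO_{Y_D}\xrightarrow{+1},
\end{gather*}
joined by the three natural comparison morphisms. Because $D$ is DB, the rightmost vertical morphism $\sO_D\to\myR\phi_*\sO_{Y_D}$ is a quasi-isomorphism, so the triangulated $9$-lemma (see \ref{app:B}) shows that the middle morphism $\sO_X\to\myR\phi_*\sO_Y$ is a quasi-isomorphism \emph{if and only if} the left-hand morphism $\varrho\col\sO_X(-D)\to\myR\phi_*\sI_{F\subseteq Y}$ is. Thus everything is reduced to proving that $\varrho$ is a quasi-isomorphism, i.e.\ that $(X,D)$ behaves like a Du~Bois pair.

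At this point I would bring in the hypothesis that $U\leteq X\setminus D$ is DB. Over $U$ the equation cutting out $D$ is a unit, so $\sO_X(-D)\resto U\simeq\sO_U$ and $F\cap\phi^{-1}(U)=\emptyset$; applying \eqref{EasyDuBoisCriterion} to $U$ shows that $\varrho\resto U$ is a quasi-isomorphism. Hence the mapping cone of $\varrho$ is supported on $D$, and in particular the non-DB locus of $X$ is contained in $D$. The strategy is then to invoke Splitting Theorem~III \eqref{thm:db-criterion} with this $\phi$, taking $W=D$ and $F=\phi^{-1}(D)$: here $Y$ is smooth, hence DB; $F$ is simple normal crossing, hence DB; and $W=D$ is DB by assumption. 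The only missing ingredient is a left inverse $\varrho'$ of $\varrho$.

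Constructing this left inverse is the crux, and I expect it to be the main obstacle. The idea would be to build $\varrho'$ from the quasi-isomorphism $\sO_D\xrightarrow{\qis}\myR\phi_*\sO_{Y_D}$ (the DB-ness of $D$) together with Grothendieck duality for $\phi$, using that the obstruction already lives on $D$. It is precisely here that the containment of the non-DB locus in $D$ --- equivalently, the hypothesis that $X\setminus D$ is DB --- is indispensable: it guarantees that the candidate splitting produced over $D$ receives no competing contribution from the complement, so that $\varrho'\circ\varrho$ can be verified to be the identity on $\sO_X(-D)$. (This containment is exactly the point at which the more sweeping assertion, dropping the hypothesis on $X\setminus D$, breaks down.) Once $\varrho'$ is in hand, \eqref{thm:db-criterion} yields that $X$ is DB in a neighbourhood of $x$. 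Equivalently, when no resolution intervenes, the same scheme collapses, via Splitting Theorem~II \eqref{thm:db-crit}, to exhibiting a left inverse of $\sO_X\to\DuBois X$ directly.
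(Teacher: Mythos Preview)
The paper does not actually supply a proof of this theorem: it is quoted from \cite[3.2]{Kovacs00b}, and the surrounding remark explains that the argument there, originally aimed at the full conjecture~\eqref{conj:DB-defo}, becomes valid precisely under the extra hypothesis that the non-DB locus of $X$ lies in $D$ --- i.e., that $X\setminus D$ is DB. So there is nothing in the present paper to compare your argument to line by line; what one can compare is whether your proposal closes the same circle.

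Your reduction via \eqref{EasyDuBoisCriterion} and the $9$-lemma is correct and clean: you arrive at the equivalent statement that $\varrho\col\sO_X(-D)\to\myR\phi_*\sI_{F\subseteq Y}$ is a quasi-isomorphism, and you correctly observe that the hypothesis on $X\setminus D$ forces the cone of $\varrho$ to be supported on $D$. But this is exactly where the argument stalls. You propose to obtain a left inverse $\varrho'$ ``from the quasi-isomorphism $\sO_D\xrightarrow{\qis}\myR\phi_*\sO_{Y_D}$ together with Grothendieck duality,'' yet you never actually build it, and it is not at all clear how those ingredients produce a map $\myR\phi_*\sI_{F\subseteq Y}\to\sO_X(-D)$ splitting $\varrho$. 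Knowing that the cone is supported on $D$ is far weaker than knowing it vanishes or that $\varrho$ splits; absent a concrete construction, invoking \eqref{thm:db-criterion} is premature, since its hypothesis is precisely the existence of that left inverse. In short, the step you flag as ``the crux'' is the entire content of the theorem, and it remains undone in your proposal.

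A minor technical point: in \eqref{thm:db-criterion} the subscheme $F$ is the \emph{scheme-theoretic} preimage $\phi^{-1}(W)$, whereas you take $F=\phi^{-1}(D)_{\rm red}$. You would need to reconcile these before applying the criterion.
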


After submitting this article, but fortunately before it went to press the above
conjecture has been settled by the author of this paper and Karl Schwede:

\begin{skthm}
  [{\cite[4.1,4.2]{Kovacs-Schwede11b}}] %
  Conjecture~\ref{conj:DB-defo} holds, that is: If $D\subset X$ is a reduced Cartier
  divisor such that $D$ has only DB singularities in a neighborhood of a point $x\in
  D$, then $X$ has only DB singularities in a neighborhood of the point $x$.
\end{skthm}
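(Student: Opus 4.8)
The plan is to encode the failure of the Du~Bois property in a single coherent complex and then to annihilate it along $D$ by a Nakayama argument. Since the assertion is local I will work near $x$ and shrink $X$ freely; let $t\in\sO_X$ be a local equation of the Cartier divisor $D$, which is a nonzerodivisor because $X$ is reduced, so that $\sO_D$ admits the Koszul resolution $[\sO_X\overset{t}{\longrightarrow}\sO_X]$ and $\sO_X\otimes^{L}\sO_D\qis\sO_D$. Let $Q_X$ be the mapping cone of the natural map $\sO_X\to\DuBois X$ of $(\ref{defDB}.\ref{item:dR-to-DB})$, and let $Q_D$ be the cone of $\sO_D\to\DuBois D$. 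By \eqref{def:db-sing}, $X$ is DB near $x$ precisely when $Q_X\qis 0$ near $x$, and the hypothesis that $D$ is DB near $x$ says exactly that $Q_D\qis 0$ near $x$. Since $\DuBois X\in\Ob D^{b}_{\rm coh}(X)$ by \eqref{defDB}, the sheaves $h^j(Q_X)$ are coherent and vanish for all but finitely many $j$.

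The first reduction is purely formal. Multiplication by $t$ gives a map $\cdot t\colon Q_X\to Q_X$ whose cone is the derived restriction $Q_X\otimes^{L}\sO_D$; concretely, identifying $Q_X\otimes\sO_X(-D)\cong Q_X$, this is the composite $Q_X\cong Q_X\otimes\sO_X(-D)\to Q_X$ induced by the inclusion $\sO_X(-D)\hookrightarrow\sO_X$. Suppose one knows that $Q_X\otimes^{L}\sO_D\qis 0$ near $x$. Then $\cdot t$ is a quasi-isomorphism near $x$, so $\cdot t\colon h^j(Q_X)\to h^j(Q_X)$ is surjective there for every $j$. As $x\in D$ we have $t\in\maxid_x$, whence $h^j(Q_X)_x=t\,h^j(Q_X)_x\subseteq\maxid_x\,h^j(Q_X)_x$, and Nakayama's lemma forces $h^j(Q_X)_x=0$. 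Coherence then spreads this vanishing to a neighborhood of $x$, giving $Q_X\qis 0$ near $x$, that is, $X$ is DB near $x$.

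Thus everything is reduced to the statement
\begin{equation*}
  Q_X\otimes^{L}\sO_D\qis Q_D\quad\text{near }x,
\end{equation*}
which, after applying $(-)\otimes^{L}\sO_D$ to the triangle $\sO_X\to\DuBois X\to Q_X\overset{+1}{\longrightarrow}$ and using $\sO_X\otimes^{L}\sO_D\qis\sO_D$, is equivalent to the assertion that the natural comparison morphism
\begin{equation*}
  \DuBois X\otimes^{L}_{\sO_X}\sO_D\longrightarrow\DuBois D
\end{equation*}
is a quasi-isomorphism near $x$; granting this together with $Q_D\qis 0$ closes the argument. This compatibility of the Deligne--Du~Bois complex with derived restriction to $D$ is the entire difficulty, and it is \emph{genuinely false without the hypothesis that $D$ be DB}: already for $X=\bA^2$ and $D$ a cuspidal cubic one has $\DuBois X\otimes^{L}\sO_D\qis\sO_D$, whereas $\DuBois D\not\qis\sO_D$ since a cusp is not seminormal and hence not DB. The source of the trouble is the familiar discrepancy, on a log resolution $\phi\colon W\to Z\supseteq X$, between the \emph{reduced} preimage $Y_D=\phi^{-1}(D)_{\rm red}$ (which computes $\DuBois D$ via Schwede's model \eqref{EasyDuBoisCriterion}) and the \emph{scheme-theoretic} vanishing of $\phi^{*}t$ (which is what the projection formula attaches to $\DuBois X\otimes^{L}\sO_D$): these differ by exceptional multiplicities, so a naive push-forward does not identify the two sides. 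The crux of the proof is to show that, under the DB hypothesis on $D$, these exceptional contributions cancel near $x$; it is exactly here that the gap in \cite{Kovacs00b}---the illegitimate reduction to an isolated non-DB locus by cutting with further general sections---must be circumvented by a global comparison of complexes rather than by a sectioning argument.
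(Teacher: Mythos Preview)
Your Nakayama reduction is correct and is indeed the skeleton of the argument: one wants $Q_X\otimes^{L}\sO_D\qis 0$ near $x$, and then multiplication by $t$ is a quasi-isomorphism on $Q_X$, so Nakayama kills each $h^j(Q_X)_x$. But your proposal stops exactly where the proof begins. You explicitly label the vanishing of $Q_X\otimes^{L}\sO_D$ as ``the entire difficulty'' and ``the crux,'' give a cautionary example showing the na\"ive compatibility $\DuBois X\otimes^{L}\sO_D\qis\DuBois D$ fails in general, and then say only that the exceptional multiplicities ``must'' cancel under the DB hypothesis on $D$---without indicating any mechanism by which they do. That is the gap. As written, this is an outline that correctly locates the obstacle and then does not remove it.

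The substance supplied in \cite{Kovacs-Schwede11b} that you are missing is a Hodge-theoretic \emph{injectivity theorem}: the natural comparison maps on the cohomology sheaves of $\DuBois X$ (respectively of $Q_X$) are shown to be injective, ultimately as a consequence of the $E_1$-degeneration in (\ref{defDB}.\ref{item:Hodge}) applied on a suitable compactified hyperresolution. This injectivity is what transports the DB hypothesis on $D$ back to $X$: roughly, the obstruction $h^j(Q_X)$, after passing to $D$, injects into the corresponding obstruction for $D$, which vanishes since $D$ is DB; combined with your Nakayama step this finishes the argument. Note also that your intermediate target ``$\DuBois X\otimes^{L}\sO_D\to\DuBois D$ is a quasi-isomorphism near $x$'' is a statement equivalent in strength to the theorem itself (both sides become $\sO_D$ exactly when $X$ and $D$ are DB near $x$), so it is not a useful reduction---the actual proof does not establish this compatibility directly but instead proves just enough injectivity on cohomology sheaves to run Nakayama. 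You have the architecture but not the load-bearing input.
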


Experience shows that divisors not in general position tend to have worse
singularities than the ambient space in which they reside.  Therefore one would in
fact expect that if $X\setminus D$ and $D$ are nice (e.g., they have DB
singularities), then perhaps $X$ is even better behaved.

We have also seen that rational singularities are DB and at least Cohen-Macau\-lay DB
singularities are not so far from being rational cf.\ \eqref{ThmCMDuBoisCriterion}.
The following result of Schwede supports this philosophical point.

\begin{skthm}[{\cite[5.1]{SchwedeEasyCharacterization}}]
  Let $X$ be a reduced scheme of finite type over a field of characteristic zero, $D$
  a Cartier divisor that has DB singularities and assume that $X\setminus D$ is
  smooth.  Then $X$ has rational singularities (in particular, it is Cohen-Macau\-lay).
\end{skthm}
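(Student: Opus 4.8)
The plan is to produce, for a suitably chosen resolution $\phi\colon Y\to X$, a quasi-isomorphism $\sO_X\qis\myR\phi_*\sO_Y$; by \eqref{def:rtl-sing} this is exactly the statement that $X$ has rational singularities, and it moreover forces $X$ to be normal (since $\sO_X\simeq\phi_*\sO_Y$ and $Y$ is normal) and, after dualizing via \eqref{thm:GD} and applying relative Grauert--Riemenschneider \eqref{GR}, Cohen--Macaulay, exactly as in the proof of \eqref{thm:kempf}. First I would take $\phi$ to be a log resolution that is an isomorphism over the smooth locus $X\setminus D$ and for which $F\leteq(\phi^{-1}D)_{\red}$ is a simple normal crossing divisor on the smooth variety $Y$. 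Because $\phi$ is an isomorphism away from $D$, the cone of the natural map $\varrho\colon\sO_X\to\myR\phi_*\sO_Y$ is supported on $D$, so the whole problem becomes local along $D$ and only the behaviour of $\phi$ over $D$ is relevant.

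Next I would compare $\varrho$ with the analogous map for $D$. Since $D$ is Cartier, $\sI_{D\subseteq X}\simeq\sO_X(-D)$, and since $F$ is reduced, $\sI_{F\subseteq Y}\simeq\sO_Y(-F)$; the ideal-sheaf sequences of $D\subseteq X$ and of $F\subseteq Y$ then fit into a morphism of distinguished triangles
$$
\bigl[\sO_X(-D)\to\sO_X\to\sO_D\bigr]\ \longrightarrow\ \bigl[\myR\phi_*\sO_Y(-F)\to\myR\phi_*\sO_Y\to\myR\phi_*\sO_F\bigr]
$$
with vertical arrows $\beta$, $\varrho$, and $\varrho_D\colon\sO_D\to\myR\phi_*\sO_F$. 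Now $\phi|_F\colon F\to D$ is precisely the restriction to the reduced preimage of $D$ in an ambient resolution, so Schwede's criterion \eqref{EasyDuBoisCriterion} applies: because $D$ is Du~Bois and $F$ is snc, $\varrho_D$ is a quasi-isomorphism and in fact $\myR\phi_*\sO_F\qis\DuBois D$. Applying the triangulated nine-lemma (see \ref{app:B}) to the displayed morphism of triangles, the cone of $\varrho$ is then quasi-isomorphic to the cone of $\beta$, so it remains to prove that
$$
\beta\colon\sO_X(-D)\longrightarrow\myR\phi_*\sO_Y(-F)
$$
is a quasi-isomorphism.

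This last step is the heart of the matter and the point at which the two hypotheses are genuinely combined. I would attack it by Grothendieck duality \eqref{thm:GD}: applying $\myR\sHom_X(-,\omega_X^\mydot)$ turns the claim into the assertion that the trace-type map $\myR\phi_*\omega_Y(F)[\dim X]\to\sO_X(D)\otimes\omega_X^\mydot$ is a quasi-isomorphism, i.e.\ into the relative vanishing $\myR^i\phi_*\omega_Y(F)=0$ for $i>0$ together with the identification $\phi_*\omega_Y(F)\simeq\omega_X(D)$. The main obstacle is exactly this relative vanishing: it fails for the reduced preimage $F$ of a general divisor — for instance over the vertex of a cone whose hyperplane section fails to be Du~Bois, where indeed $X$ is not even Cohen--Macaulay — so the argument must use that $D$ itself is Du~Bois and not merely that $Y$ is smooth. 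Concretely I would feed in the sharper half of \eqref{EasyDuBoisCriterion}, which identifies $\myR\phi_*\sO_F$ with $\DuBois D$ and, through the Hodge-theoretic degeneration in \eqref{defDB}, controls the cohomology of $\sO_Y(-F)$ — dually of $\omega_Y(F)$ — along the fibres of $\phi$ over $D$.

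Once $\beta$ is shown to be a quasi-isomorphism, the triangle immediately gives $\sO_X\qis\myR\phi_*\sO_Y$, completing the proof; equivalently one may phrase the outcome through \eqref{thm:rtl-crit}, regarding the map obtained from $\beta$ and $\varrho_D$ as the required left inverse. I expect steps one and two to be essentially formal, and the duality-plus-vanishing input for $\beta$ in the third paragraph to carry all the real content.
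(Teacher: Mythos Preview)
The paper does not supply a proof of this statement; it is quoted from Schwede's paper without argument, so there is no ``paper's proof'' to compare against. Let me assess your attempt on its own terms.

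Your setup is sound: the nine-lemma reduction splits the problem into $\varrho_D$ and $\beta$, and Schwede's criterion \eqref{EasyDuBoisCriterion} does dispose of $\varrho_D$ exactly as you say. So everything rests on showing that
\[
\beta\colon \sO_X(-D)\longrightarrow \myR\phi_*\sO_Y(-F)
\]
is a quasi-isomorphism, and you correctly flag this as the heart of the matter.

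However, your treatment of $\beta$ has a real gap. First, your dual reformulation---``$\myR^i\phi_*\omega_Y(F)=0$ for $i>0$ together with $\phi_*\omega_Y(F)\simeq\omega_X(D)$''---already presupposes that $\omega_X^\mydot$ is concentrated in a single degree, i.e.\ that $X$ is Cohen--Macaulay; that is part of the conclusion, so the reformulation is circular. Without knowing CM-ness, dualizing $\beta$ only returns the equivalent assertion $\myR\phi_*\omega_Y(F)[\dim X]\qis\omega_X^\mydot\otimes\sO_X(D)$. Second, the vanishing $\myR^i\phi_*\omega_Y(F)=0$ is \emph{not} Grauert--Riemenschneider (which treats $\omega_Y$, not $\omega_Y(F)$), it fails in general, and you do not prove it: your appeal to ``the sharper half of \eqref{EasyDuBoisCriterion}'' and ``Hodge-theoretic degeneration'' just re-invokes the identification $\myR\phi_*\sO_F\qis\DuBois D$ that you already spent on $\varrho_D$, so no new information enters. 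The genuine obstacle is that $\phi^*D$ is in general \emph{not} reduced, hence $F\neq\phi^*D$ and $\beta$ is \emph{not} simply $\varrho\otimes\sO_X(-D)$; controlling the effective exceptional discrepancy $\phi^*D-F$ is exactly where the content lies, and your proposal does not address it. (Contrast the computation at the end of \S\ref{sec:rtl}, which goes in the opposite direction and explicitly assumes the pullback is reduced.)
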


Let me conclude with a conjectural generalization of this statement:

\begin{skconj}\label{conj:DB-to-rtl}
  Let $X$ be a reduced scheme of finite type over a field of characteristic zero, $D$
  a Cartier divisor that has DB singularities and assume that $X\setminus D$ has
  rational singularities.  Then $X$ has rational singularities (in particular, it is
  Cohen-Macau\-lay).
\end{skconj}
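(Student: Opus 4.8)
The plan is to reduce everything to the single statement that, for a suitable log resolution $\pi\colon Y\to X$ of the pair $(X,D)$, the natural map $\alpha_2\colon \sO_X\to \myR\pi_*\sO_Y$ is a quasi-isomorphism; since $Y$ is smooth this simultaneously yields that $X$ is normal, Cohen-Macaulay, and has rational singularities (cf.\ \eqref{def:rtl-sing}). I would choose $\pi$ to be an isomorphism over $X\setminus(\Sing X\cup D)$ and such that $F\leteq \pi^{-1}(D)_{\mathrm{red}}$ is a simple normal crossing divisor. The two ideal-sheaf sequences of $D\subseteq X$ and of $F\subseteq Y$ then give a morphism of distinguished triangles with vertical maps $\alpha_1\colon \sO_X(-D)\to \myR\pi_*\sO_Y(-F)$, $\alpha_2\colon \sO_X\to\myR\pi_*\sO_Y$ and $\alpha_3\colon \sO_D\to\myR\pi_*\sO_F$, compatible exactly as in the diagram at the end of \S\ref{sec:rtl}. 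The whole argument is an instance of the Splitting Principle: the aim is to show that nothing obstructs $\alpha_2$ from being invertible.

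First I would analyse the outer two maps. Because $F$ is simple normal crossing it is DB, so Schwede's characterization \eqref{EasyDuBoisCriterion} identifies $\myR\pi_*\sO_F$ with $\DuBois{D}$ in such a way that $\alpha_3$ becomes the natural map $\sO_D\to\DuBois{D}$; since $D$ is DB by hypothesis, $\alpha_3$ is a quasi-isomorphism. On the other hand $X\setminus D$ has rational singularities, and $\pi$ restricts to a resolution $Y\setminus F\to X\setminus D$, so over $X\setminus D$ the map $\alpha_2$ is a quasi-isomorphism and its cone $C$ is supported on $D$. Applying the triangulated nine-lemma (cf.\ \ref{app:B} and \eqref{lem:h-nought}) to the morphism of triangles, the vanishing of the cone of $\alpha_3$ forces a quasi-isomorphism between the cone of $\alpha_1$ and $C$.

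It remains to exploit that $D$ is Cartier. Writing $\pi^*D=F+\Delta$ with $\Delta=\sum(m_i-1)E_i$ effective and $\pi$-exceptional, the projection formula gives $\myR\pi_*\sO_Y(-\pi^*D)\qis \myR\pi_*\sO_Y\otimes\sO_X(-D)$, so the variant $\alpha_1'$ of $\alpha_1$ built from the total transform satisfies $\alpha_1'=\alpha_2\otimes\id_{\sO_X(-D)}$ and has cone $C\otimes\sO_X(-D)$. Comparing $\alpha_1$ and $\alpha_1'$ through the inclusion $\sO_Y(-\pi^*D)\hookrightarrow\sO_Y(-F)$, and controlling the difference by relative Grauert-Riemenschneider vanishing \eqref{GR} for the sheaves supported on $\Delta$, I would deduce $C\qis C\otimes\sO_X(-D)$. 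Since $C$ is supported on $D$ and the comparison is realized by the multiplication map given by the equation of $D$, tracking that map (not merely the abstract isomorphism) shows it is an isomorphism, whence a Nakayama-type argument on the coherent cohomology sheaves of $C$ forces $C=0$. Equivalently, this same data produces a left inverse of $\alpha_2$, and one concludes by Splitting theorem I \eqref{thm:rtl-crit}, $Y$ being smooth and hence rational.

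The hardest step is the last one: reconciling the reduced preimage $F$, which is what the DB hypothesis on $D$ controls through \eqref{EasyDuBoisCriterion}, with the scheme-theoretic total transform $\pi^*D$, which is what makes the projection formula and the twist by $\sO_X(-D)$ transparent. The gap between them is measured by the exceptional multiplicities along $\Delta$, and showing that $\myR\pi_*$ does not see this gap is precisely where one must feed in the finer vanishing theorems. A cleaner alternative for the Du~Bois part of the conclusion is to invoke the now-established deformation theorem for DB singularities \cite{Kovacs-Schwede11b} (see \eqref{conj:DB-defo}): since $D$ is a Cartier divisor with DB singularities, $X$ is DB near $D$, and $X\setminus D$ is DB by \eqref{cor:rtl-DB}, so $X$ is DB; the remaining task is then only to upgrade DB to rational using that $X\setminus D$, and not merely $D$, has rational singularities, which again isolates the same multiplicity and vanishing difficulty.
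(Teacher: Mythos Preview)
The statement you are attempting to prove is presented in the paper as a \emph{conjecture}; the paper does not claim a proof. What the paper does prove is only the special case where $X$ is Gorenstein and $D$ is normal, and the method there is completely different from yours: under those extra hypotheses one uses \eqref{CorGorLCImpliesDuBois} to deduce that $D$ is log canonical, then inversion of adjunction to conclude that $(X,D)$ and hence $X$ is log canonical near $D$, and finally the Gorenstein hypothesis together with rationality on $X\setminus D$ to upgrade log canonical to canonical and invoke Elkik's theorem \eqref{c-elkik}. No cone or Nakayama argument appears.

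Your outline correctly isolates where the difficulty lies, but the step you label ``hardest'' is a genuine gap, not merely a technicality. You need the natural map $\myR\pi_*\sO_Y(-\pi^*D)\to\myR\pi_*\sO_Y(-F)$ to be a quasi-isomorphism, equivalently $\myR\pi_*\bigl(\sO_Y(-F)/\sO_Y(-\pi^*D)\bigr)=0$. This sheaf is $\sO_\Delta(-F)$ for the non-reduced exceptional scheme $\Delta=\pi^*D-F$, and Grauert--Riemenschneider \eqref{GR} says nothing about it: \eqref{GR} controls $\myR^i\pi_*(\omega_Y\otimes\sL)$, not pushforwards of structure-type sheaves on exceptional loci twisted by $-F$. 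There is no evident way to rewrite $\sO_\Delta(-F)$ as such a twist of a relative dualizing sheaf without already assuming something like the Gorenstein or $\bQ$-Gorenstein condition on $X$, which is exactly what the paper's partial result imposes. In short, the reconciliation of the reduced preimage $F$ (forced on you by Schwede's criterion) with the scheme-theoretic pullback $\pi^*D$ (needed for the projection formula) is the whole content of the conjecture, and invoking \eqref{GR} does not bridge it. Your closing paragraph is honest about this, but the proposal as written does not constitute a proof.
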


Essentially the same proof as in \eqref{thm:Gor-defo} shows that this is also true
under the same additional hypotheses.

\begin{skthm}
  Assume that $X$ is Gorenstein and $D$ is normal.  Then the statement of
  \eqref{conj:DB-to-rtl} is true.
\end{skthm}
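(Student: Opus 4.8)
The plan is to follow the proof of \eqref{thm:Gor-defo} verbatim to reduce to the situation in which $X$ is normal, Gorenstein and Du~Bois, and then to run a cohomological argument---in the spirit of the analysis in \S\ref{sec:rtl}---to upgrade ``Du~Bois'' to ``rational'' by exploiting the hypothesis that $X\setminus D$ is rational. The point is that for a Gorenstein variety ``Du~Bois'' alone is far from ``rational'' (e.g. a cone over an elliptic curve), so the extra hypothesis on $X\setminus D$ must enter in an essential way, and the normality of $D$ is what makes it decisive.

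First I would argue exactly as in \eqref{thm:Gor-defo}. The question is local near a point $x\in D$, since away from $D$ the variety is rational by hypothesis. As $X$ is Gorenstein and $D$ is Cartier, $D$ is Gorenstein; being also normal and Du~Bois, $D$ is log canonical by \eqref{CorGorLCImpliesDuBois}. Inversion of adjunction \cite{MR2264806} then makes $X$ log canonical in a neighbourhood of $D$, and, because $D$ is normal, $X$ is normal along $D$; together with the normality of $X\setminus D$ this gives that $X$ is normal, hence Gorenstein and Cohen--Macaulay. Finally \eqref{CorGorLCImpliesDuBois} shows that $X$, being normal, Gorenstein and log canonical, is Du~Bois. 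Up to here nothing new is needed.

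For the upgrade I would choose a resolution $\phi\colon Y\to X$ for which $\what D\leteq\phi^{-1}(D)_{\red}$ is a simple normal crossing divisor and form the commutative diagram of distinguished triangles
$$
\xymatrix{
\sO_X(-D)\ar[r]\ar[d]_{\alpha_1} & \sO_X\ar[r]\ar[d]_{\alpha_2} & \sO_D\ar[r]^-{+1}\ar[d]_{\alpha_3} & \ \\
\myR\phi_*\sI_{\what D\subseteq Y}\ar[r] & \myR\phi_*\sO_Y\ar[r] & \myR\phi_*\sO_{\what D}\ar[r]^-{+1} & \ }
$$
just as in \S\ref{sec:rtl}. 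Because $D$ is Du~Bois and $\what D$ is the reduced preimage, Schwede's criterion \eqref{EasyDuBoisCriterion} makes $\alpha_3$ a quasi-isomorphism; because $X\setminus D$ is rational, $\alpha_1$ and $\alpha_2$ are quasi-isomorphisms over $X\setminus D$, so the cone $C$ of $\alpha_2$ is supported on $D$. The triangulated $9$-lemma (see \ref{app:B}) then yields $\mathrm{cone}(\alpha_1)\qis C$, and by \eqref{def:rtl-sing} the goal is precisely $C=0$. The concluding device would be to compare $\alpha_1$ with $\alpha_2\otimes\id_{\sO_X(-D)}$: if $\phi^{*}D$ were reduced, i.e. $\phi^{*}D=\what D$, the projection formula would give $\mathrm{cone}(\alpha_1)\qis C\otimes\sO_X(-D)$, so that the $9$-lemma identification becomes an isomorphism $C\otimes\sO_X(-D)\xrightarrow{\ \cdot t\ }C$ induced by multiplication by a local equation $t$ of $D$. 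Since $C$ is supported on $V(t)=D$, each $h^{i}(C)$ is killed by a power of $t$, and an isomorphism given by multiplication by $t$ then forces $h^{i}(C)=0$; thus $C=0$ and $X$ is rational.

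The main obstacle is exactly that $\phi^{*}D$ need not be reduced: writing $\phi^{*}D=\what D+E''$ with $E''\geq 0$ and $\phi$-exceptional, the projection formula controls only $\myR\phi_*\sO_Y(-\phi^{*}D)$, whereas the Du~Bois comparison forces the use of the reduced $\what D=\phi^{-1}(D)_{\red}$. Bridging the two amounts to the vanishing $\myR\phi_*\sO_{E''}(-\what D)=0$, equivalently $\myR\phi_*\sO_Y(-\phi^{*}D)\qis\myR\phi_*\sI_{\what D\subseteq Y}$. I expect this vanishing---not the formal diagram chase---to be the crux, and it is precisely where the Gorenstein hypothesis and the Du~Bois-ness of $D$ are indispensable, to be established through Grauert--Riemenschneider vanishing \eqref{GR} and Grothendieck duality \eqref{thm:GD}. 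This is the phenomenon already packaged in Splitting Theorem~III \eqref{thm:db-criterion}, applied with $W=D$ and $F=\phi^{-1}(D)$, together with its rational companion recorded after \eqref{thm:main}; invoking that machinery directly is the cleanest way to discharge the excess term $E''$ and complete the proof.
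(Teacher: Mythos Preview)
Your first paragraph matches the paper verbatim and is correct. After that you take a much harder road than necessary, and you leave the hardest step unproved.

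The paper's argument is a one-line discrepancy computation, not a cohomological one. Once you know that $(X,D)$ is log canonical near $D$ (from inversion of adjunction) and that $X\setminus D$ has rational---hence, being Gorenstein, \emph{canonical}---singularities, you simply compare discrepancies: for any exceptional divisor $E$ with centre in $D$ one has
\[
a(E;X)=a(E;X,D)+\mult_E(\phi^*D)\ \geq\ -1+1\ =\ 0,
\]
because $D$ is Cartier and $E$ lies over $D$. Thus $X$ is canonical everywhere, and Elkik \eqref{c-elkik} gives rationality. That is the entire remaining content; no derived-category diagram, no $9$-lemma, no vanishing for an excess divisor is needed.

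Your cohomological route is not wrong in spirit, but it is incomplete exactly where you say it is. The vanishing $\myR\phi_*\sO_Y(-\phi^*D)\qis\myR\phi_*\sI_{\what D\subseteq Y}$, equivalently $\myR\phi_*\bigl(\sO_{E''}(-\what D)\bigr)=0$ for $E''=\phi^*D-\what D$, is a genuine statement that you do not prove; invoking \eqref{thm:db-criterion} or ``its rational companion'' does not supply it, since those results give a \emph{left inverse} to the natural map, not the quasi-isomorphism you need to identify $\alpha_1$ with $\alpha_2\otimes\id_{\sO_X(-D)}$. Moreover, even granting that vanishing, you would still need to check that the abstract isomorphism of cones produced by the $9$-lemma really is the one induced by multiplication by $t$; this is plausible but requires care about the compatibility of the identifications. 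All of this is avoidable: the Gorenstein hypothesis lets you translate ``rational'' into ``canonical'' and ``Du~Bois'' into ``log canonical'' via \eqref{CorGorLCImpliesDuBois}, after which the problem is purely about discrepancies.
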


\begin{proof}
  If $X$ is Gorenstein, then so is $D$ as it is a Cartier divisor. Then by
  \eqref{CorGorLCImpliesDuBois} $D$ is log canonical. Then $X$ is also log canonical
  near $D$ by inversion of adjunction \cite{MR2264806}.

  As $X$ is Gorenstein and $X\setminus D$ has rational singularities, it follows that
  $X \setminus D$ has canonical singularities.  Then $X$ has only canonical
  singularities everywhere. This can be seen by observing that $D$ is a Cartier
  divisor and examining the discrepancies that lie over $D$ for $(X, D)$ as well as
  for $X$. Therefore, by \eqref{c-elkik} \cite{Elkik81} $X$ has only rational
  singularities along $D$.
\end{proof}


\appendix

\section{The $\bQ$-Cartier condition in families}\label{app:A}

\newcommand\fe{{\sf f_1}}
\newcommand\fk{{\sf f_2}}


Let $R\subseteq \bP^4$ be a quartic rational normal curve, i.e., the image of the
embedding of $\bP^1$ into $\bP^4$ by the global sections of $\sO_{\bP^1}(4)$.

  Let $T\subseteq \bP^5$ be a quartic rational scroll, i.e., the image of the
  embedding of $\bP^1\times \bP^1$ into $\bP^5$ by the global sections of
  $\sO_{\bP^1\times\bP^1}(1,2)$. Then {$R$ is a hyperplane section of $T$}.
  Indeed, let $\fe$ and $\fk$ denote the divisor classes of the two rulings on $T$
  and let $H\subseteq \bP^5$ be a general hyperplane. Then $C\leteq H\intersect T$ is
  a smooth curve such that $C\sim_T \fe + 2\fk$.  Then by the adjunction formula
  $2g(C)-2=(-2\fe-2\fk+C)\cdot C= -2$, hence $C\simeq \bP^1$. Furthermore, then
  $C^2=4$, so $\sO_{T}(1,2)\resto C\simeq \sO_{C}(4)$. Therefore $C$ is a quartic
  rational curve in $H\simeq \bP^4$, and thus it may be identified with $R$.

  Let $C_R\subseteq \bP^5$ be the projectivized cone over $R$ in $\bP^5$ and
  $C_T\subseteq \bP^6$ the projectivized cone over $T$ in $\bP^6$. Then as $R$ is a
  hyperplane section of $T$, it follows that both $T$ and $C_R$ are hyperplane
  sections of $C_T$, so {$T$ is a smoothing of $C_R$}.

  Let $V\subseteq \bP^5$ be a Veronese surface, i.e., the image of the Veronese
  embedding; the embedding of $\bP^2$ into $\bP^5$ by the global sections of
  $\sO_{\bP^2}(2)$.  Let $D\subset V$ be the image of a smooth conic of $\bP^2$. Then
  $D$ is a hyperplane section of $V$ and it is also a rational normal quartic curve
  in $\bP^4$ so it can also be identified with $R$. Therefore, the same way as above,
  using $C_V$, the cone over $V$, one sees that $V$ is also a smoothing of $C_R$.
  
  It is relatively easy, and thus left to the reader, to compute that $C_R$ has log
  terminal singularities. In particular, this type of singularity is among those that
  appear on stable varieties.  In fact, considering a cyclic covering
  \cite[2.50]{KM98} branched over a highly divisible relatively very ample divisor
  gives a family of stable varieties with the same kind of singularities as the ones
  that appear here. This can be applied for both of the families coming from $C_T$
  and $C_V$.

  The problem this example points to is that if one allows arbitrary families, then
  one may get unwanted results.  For example, using the families derived from $C_T$
  and $C_V$ would mean that $T\simeq \bP^1\times \bP^1$ and $V\simeq \bP^2$ should be
  considered to have the same deformation type (or the same statement for the
  surfaces of general type on the cyclic cover mapping to these fibers). However,
  there are obviously no smooth families that they both belong to, they are
  topologically very different.  For instance, $K_T^2=8$ while $K_V^2=9$.

  The crux of the matter is that $K_{C_T}$ is \emph{not} $\bQ$-Cartier and
  consequently the family obtained from it is not a \emph{(weakly) stable family} as
  defined in \eqref{def:weakly-stable-family} and \eqref{def:stable-family}.  This is
  actually an important point: the canonical classes of the members of the family are
  $\bQ$-Cartier, but the relative canonical class of the family is not $\bQ$-Cartier.
  In particular, the canonical divisors of the members of the family are not
  consistent.

  The family obtained from $C_V$ \emph{has} a $\bQ$-Cartier canonical class and
  consequently ensures that the canonical divisors of the members of the family are
  similar to some extent. Among other things this implies that $K_{C_R}^2=9$.  One
  may also use an actual parametrization of $C_R$ to verify this fact independently.
  It is interesting to note that $K_{C_R}$ is $\bQ$-Cartier, but not Cartier even
  though its self-intersection number is an integer.

\section{The nine lemma in triangulated categories}\label{app:B}

For lack of an appropriate reference the following pseudo-trivial theorem is proved
here for the reader's convenience.

\begin{skthm}\label{thm:nine-lemma}
  Let $\sfA,\sfB,\sfC,\sfA',\sfB',\sfC',\sfA'',\sfB''$ be objects in a triangulated
  category $\frT$ and assume that there exists a commutative diagram in which the
  first two rows and the first two columns form distinguished triangles: \vskip 6em
  \begin{enumerate}
  \item   
    \label{eq:B0}
    \ \vskip -6.5em
    \hfill $
    \xymatrix{ %
      \sfA \ar[r]^{\phi} \ar[d]_\alpha & \sfB \ar[r]^\psi \ar[d]_\beta & \sfC
      \ar[r]^{+1} &
      \\ 
      \sfA' \ar[r]^{\phi'} \ar[d]_{\alpha'} & \sfB' \ar[r]^{\psi'}
      \ar[d]_{\beta'} &    \sfC' \ar[r]^{+1} & 
      \\
      \ar[d]_{+1}   \sfA'' & \ar[d]_{+1} \sfB'' &
      \\
      &&&
    } $ \hfill\ 

    \ 

    \noindent
  Then there exist a morphism $\gamma:\sfC\to\sfC'$ with mapping cone $\sfC''$, i.e.,
  such that $ \xymatrix{\sfC \ar[r] & \sfC'\ar[r] & \sfC'' \ar[r]^{+1} & } $ is a
  distinguished triangle, and morphisms $\sfA''\to\sfB''$, $\sfB''\to \sfC''$,
  $\sfC''\to \sfA''[1]$ such that
  $$
  \xymatrix{%
    \sfA'' \ar[r] & \sfB'' \ar[r] & \sfC'' \ar[r]^{+1} & \\
  }
  $$
  is a distinguished triangle, and the diagram \vskip 6em
\item 
  \label{eq:B1} 
  \ \vskip -6.5em \hfill $ \xymatrix{ %
    \sfA \ar[r]^{\phi} \ar[d]_\alpha & \sfB \ar[r]^\psi \ar[d]_\beta & \sfC
    \ar[r]^{+1} \ar[d]_\gamma &
    \\
    \sfA' \ar[r]^{\phi'} \ar[d]_{\alpha'} & \sfB' \ar[r]^{\psi'} \ar[d]_{\beta'} &
    \sfC' \ar[r]^{+1} \ar[d]_{\gamma'}&
    \\
    \ar[d]_{+1} \ar[r] \sfA'' & \ar[d]_{+1} \sfB'' \ar[r] & \sfC'' \ar[d]_{+1}
    \ar[r]^{+1} &
    \\
    &&& } $\hfill\
  
  \
  
  \noindent
  is commutative.  Furthermore, if the triangulated category $\frT$ is a derived
  category and $\sfC$ and $\sfC'$ are such that $h^i(\sfC)=0$ for $i\neq 0$ and
  $h^j(\sfC')=0$ for $j<0$, then $\gamma$ is uniquely determined by the original
  diagram \rm{(\ref{thm:nine-lemma}.\ref{eq:B0})}.
\end{enumerate}
\end{skthm}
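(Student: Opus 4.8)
The plan is to produce $\gamma$, the object $\sfC''$, and the two missing distinguished triangles in one stroke by applying the octahedral axiom three times, then to extract the commutativity of $(\ref{thm:nine-lemma}.\ref{eq:B1})$ from the octahedral compatibilities, and finally to obtain the uniqueness clause from the $t$-structure hypotheses on $\sfC$ and $\sfC'$. First I would use that the top-left square commutes to set $\theta\leteq\beta\circ\phi=\phi'\circ\alpha$. Applying the octahedral axiom to the composite $\sfA\xrightarrow{\phi}\sfB\xrightarrow{\beta}\sfB'$, whose first two cones are $\sfC=\mathrm{cone}(\phi)$ and $\sfB''=\mathrm{cone}(\beta)$, yields the object $K\leteq\mathrm{cone}(\theta)$ and a distinguished triangle $\sfC\xrightarrow{a} K\xrightarrow{b}\sfB''\to\sfC[1]$; applying it to the composite $\sfA\xrightarrow{\alpha}\sfA'\xrightarrow{\phi'}\sfB'$, whose first two cones are $\sfA''=\mathrm{cone}(\alpha)$ and $\sfC'=\mathrm{cone}(\phi')$, yields the \emph{same} $K$ and a distinguished triangle $\sfA''\xrightarrow{c} K\xrightarrow{d}\sfC'\to\sfA''[1]$. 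I would then define $\gamma\leteq d\circ a\colon\sfC\to\sfC'$ and apply the octahedral axiom a third time to the composite $\sfC\xrightarrow{a}K\xrightarrow{d}\sfC'$. Since $\mathrm{cone}(a)=\sfB''$ and $\mathrm{cone}(d)=\sfA''[1]$, this last octahedron supplies the cone $\sfC''=\mathrm{cone}(\gamma)$ — giving the third column $\sfC\xrightarrow{\gamma}\sfC'\to\sfC''\to\sfC[1]$ — together with a distinguished triangle $\sfB''\to\sfC''\to\sfA''[1]\to\sfB''[1]$, whose rotation is exactly the desired third row $\sfA''\to\sfB''\to\sfC''\to\sfA''[1]$. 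This single construction produces $\gamma$, the object $\sfC''$ with all three of its structural maps, and both new triangles at once.

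Next I would verify that the full diagram $(\ref{thm:nine-lemma}.\ref{eq:B1})$ commutes. The cap identities of the first two octahedra give the $\gamma$-column relations directly: from $a\circ\psi=k\circ\beta$ (first octahedron, with $k\colon\sfB'\to K$) and $d\circ k=\psi'$ (second octahedron) one computes $\gamma\circ\psi=d\circ a\circ\psi=d\circ k\circ\beta=\psi'\circ\beta$, which is the upper-right square, and the companion caps $k_0\circ a=w$ and $w'\circ d=\alpha[1]\circ k_0$ give the compatibility $w'\circ\gamma=\alpha[1]\circ w$ of the two connecting maps $w,w'$. The remaining lower squares, and the identification of $\sfA''\to\sfB''$ and $\sfB''\to\sfC''$ with the morphisms induced by $\alpha',\beta',\psi'$, come from the caps of the third octahedron. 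I expect this bookkeeping to be the main obstacle: as is well known for the triangulated nine lemma, the octahedral data guarantee that eight of the nine squares commute on the nose while the ninth commutes only up to sign. That sign is harmless here — it can be absorbed by negating a single connecting morphism, and it does not affect any cohomological conclusion drawn from the diagram — but it is the one point that genuinely needs care.

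Finally, for the uniqueness clause I would argue that if $\gamma_1,\gamma_2$ both complete $(\ref{thm:nine-lemma}.\ref{eq:B1})$ commutatively, then their difference $g\leteq\gamma_1-\gamma_2$ completes the diagram built on $\alpha=0,\beta=0$, so in particular $g\circ\psi=0$; applying $\Hom(-,\sfC')$ to the rotated first row then shows that $g$ factors through the connecting morphism $w\colon\sfC\to\sfA[1]$, say $g=\delta\circ w$ with $\delta\colon\sfA[1]\to\sfC'$. Here the hypotheses enter: $h^i(\sfC)=0$ for $i\neq0$ gives $\sfC\qis h^0(\sfC)[0]\in D^{\leq0}$ and $h^j(\sfC')=0$ for $j<0$ gives $\sfC'\in D^{\geq0}$, so morphisms $\sfC\to\sfC'$ are detected on $h^0$; the operative vanishing is then $\Hom(\sfA[1],\sfC')=\Hom(\sfA,\sfC'[-1])=0$, which holds because $\sfA\in D^{\leq0}$ while $\sfC'[-1]\in D^{\geq1}$. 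In the application in \S\ref{sec:rtl} the rows are the structure-sheaf sequence of the Cartier divisor $X_b$, so $\sfA$ is a genuine sheaf and this condition is automatic. Hence $\delta=0$, so $g=0$ and $\gamma_1=\gamma_2$; equivalently, $h^0(\gamma)$ is forced by $\gamma\circ\psi=\psi'\circ\beta$ via the surjection $h^0(\psi)$, and by $h^0$-detection this determines $\gamma$. This is exactly the uniqueness of $\alpha_3$ in terms of $\alpha_1,\alpha_2$ invoked in \S\ref{sec:rtl}.
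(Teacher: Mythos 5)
Your construction is the paper's proof essentially verbatim: the same three octahedra in the same order --- the cone of $\beta\circ\phi=\phi'\circ\alpha$ (your $K$ is the paper's $\sfD$), the triangles $\sfC\to\sfD\to\sfB''$ and $\sfA''\to\sfD\to\sfC'$, the definition $\gamma=d\circ a$, and a final octahedron applied to $\sfC\to\sfD\to\sfC'$ yielding $\sfB''\to\sfC''\to\sfA''[1]$, rotated into the third row --- and you extract the commutativity of (\ref{thm:nine-lemma}.\ref{eq:B1}) from the octahedral compatibilities just as the paper's ``follows from the construction'' does, only more explicitly. Your caveat about the one square that only anticommutes is accurate but harmless here, since that square (the two composites $\sfC''\to\sfA[1]$ through the connecting morphisms) is not among the squares displayed in (\ref{thm:nine-lemma}.\ref{eq:B1}). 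The only real divergence is the uniqueness clause: the paper reduces it to Lemma~\ref{lem:h-nought} (any $\gamma\colon\sfC\to\sfC'$ is determined by $h^0(\gamma)$), whereas you factor $g=\gamma_1-\gamma_2$ through the connecting morphism $w\colon\sfC\to\sfA[1]$ and invoke $\Hom(\sfA[1],\sfC')=0$; these are equivalent $t$-structure computations. You are right to flag that your vanishing needs $\sfA\in D^{\le 0}$, which is not among the stated hypotheses --- and this is in fact a point the paper's own proof glosses over: Lemma~\ref{lem:h-nought} only reduces uniqueness to knowing $h^0(\gamma)$, and the square $\gamma\circ\psi=\psi'\circ\beta$ pins down $h^0(\gamma)$ only on the image of $h^0(\psi)$, so one still needs $h^0(\psi)$ surjective (e.g.\ $h^1(\sfA)=0$). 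Without some such condition uniqueness can genuinely fail: for the rows $\bZ/2[-1]\to\bZ\xrightarrow{\ 2\ }\bZ$ and $0\to\bZ/2\xrightarrow{\ \id\ }\bZ/2$ with zero vertical maps, both $\gamma=0$ and the reduction $\bZ\to\bZ/2$ are compatible completions, even though $\sfC$ and $\sfC'$ satisfy the stated cohomological hypotheses. As you observe, in the application in \S\ref{sec:rtl} the first row consists of sheaves placed in degree zero, so the extra condition is automatic and the uniqueness of $\alpha_3$ holds exactly as used there.
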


\begin{proof}
  The proof consists of repeated applications of the octachedral axiom. 

  First consider the composition $\sfA\to\sfB\to\sfB'$ and let $D$ be an object that
  completes this morphism to a distinguished triangle.
  $$
  \xymatrix{ %
    && \sfD \ar[dddll]_{+1} \ar@{-->}[drr]^{\exists} & \\
    \sfC \ar[dd]_{+1} \ar@{-->}[urr]^{\exists} &&&& \sfB''
    \ar@{-->}[llll]_{+1}^{\exists}|!{[dd];[llu]}\hole |!{[llu];[lllldd]}\hole
    \ar[dddll]_{+1}|!{[dd];[llu]}\hole|!{[ddllll];[dd]}\hole
    \\
    \\
    \sfA \ar[rrrr]_{\beta\circ\phi} \ar[rrd]_{\phi} &&&& \sfB'
    \ar[lluuu]_(.45){} \ar[uu]_{\beta'}\\
    && \sfB \ar[lluuu]_{\psi} |!{[llu];[rru]}\hole|!{[uuuu];[ull]}\hole
    \ar[rru]_\beta \\
  }
  $$
  Then by the octahedral axiom there exist morphisms as indicated on the above
  diagram such that $\xymatrix{\sfC\ar[r] & \sfD\ar[r] & \sfB\ar[r]^{+1} &}$ is a
  distinguished triangle.

  Next, consider the composition $\sfA\to\sfA'\to\sfB'$. Since
  $\phi'\circ\alpha=\beta\circ\phi$, $D$ is still an object that completes this
  composition to a distinguished triangle.
  $$
  \xymatrix{ %
    && \sfD \ar[dddll]_{+1} \ar@{-->}[drr]^{\exists} & \\
    \sfA'' \ar[dd]_{+1} \ar@{-->}[urr]^{\exists} &&&& \sfC'
    \ar@{-->}[llll]_{+1}^{\exists}|!{[dd];[llu]}\hole |!{[llu];[lllldd]}\hole
    \ar[dddll]_{+1}|!{[dd];[llu]}\hole|!{[ddllll];[dd]}\hole
    \\
    \\
    \sfA \ar[rrrr]_{\phi'\circ\alpha} \ar[rrd]_{\alpha} &&&& \sfB'
    \ar[lluuu]_(.45){} \ar[uu]_{\psi'}\\
    && \sfA' \ar[lluuu]_{\alpha'} |!{[llu];[rru]}\hole|!{[uuuu];[ull]}\hole
    \ar[rru]_{\phi'} \\
  }
  $$
  Then by the octahedral axiom there exist morphisms as indicated on the above
  diagram such that $\xymatrix{\sfA''\ar[r] & \sfD\ar[r] & \sfC'\ar[r]^{+1} &}$ is a
  distinguished triangle.

  Finally, consider the composition $\sfC\to\sfD\to\sfC'$ using the morphisms
  obtained by the above two applications of the octahedral axiom. Let
  $\gamma:\sfC\to\sfC'$ be defined as this composition and $\sfC''$ its mapping
  cone. 
  $$
  \xymatrix{ %
    && \sfC'' \ar[dddll]_{+1} \ar@{-->}[drr]^{\exists} & \\
    \sfB'' \ar[dd]_{+1} \ar@{-->}[urr]^{\exists} &&&& \sfA''[1]
    \ar@{-->}[llll]_{+1}^{\exists}|!{[dd];[llu]}\hole |!{[llu];[lllldd]}\hole
    \ar[dddll]_{+1}|!{[dd];[llu]}\hole|!{[ddllll];[dd]}\hole
    \\
    \\
    \sfC \ar[rrrr]_{\gamma} \ar[rrd]_{} &&&& \sfC'
    \ar[lluuu]_(.45){} \ar[uu]_{}\\
    && \sfD \ar[lluuu]_{} |!{[llu];[rru]}\hole|!{[uuuu];[ull]}\hole
    \ar[rru]_{} \\
  }
  $$
  Then by the octahedral axiom there exist morphisms as indicated on the above
  diagram such that $\xymatrix{\sfB''\ar[r] & \sfC''\ar[r] & \sfA''[1]\ar[r]^-{+1}
    &}$, and hence $$\xymatrix{\sfA''\ar[r] & \sfB''\ar[r] & \sfC''\ar[r]^{+1} &}$$
  are distinguished triangles. The fact that the diagram
  (\ref{thm:nine-lemma}.\ref{eq:B1}) is commutative follows from the construction and
  the uniqueness of $\gamma$ in the indicated case follows from
  Lemma~\ref{lem:h-nought}.
\end{proof}

\begin{sklem}\cite[2.2.4]{KK10}\label{lem:h-nought}
  Let $\sfC,\sfC'$ objects in a derived category such that $h^{i}(\sfC)=0$ for $i\neq
  0$ and $h^j(\sfC')=0$ for $j<0$. Then any morphism $\gamma:\sfC\to\sfC'$ is
  uniquely determined by $h^0(\gamma)$.
\end{sklem}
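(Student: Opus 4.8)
The plan is to compute the group $\Hom(\sfC,\sfC')$ outright, using the standard t-structure on the ambient derived category $D$ together with its truncation functors $\tau_{\le n},\tau_{\ge n}$. The outcome will be a canonical identification of this group with $\Hom\bigl(h^0(\sfC),h^0(\sfC')\bigr)$ in the underlying abelian category, realized by applying $h^0$; the asserted uniqueness is then immediate, since $\gamma$ and $\gamma'$ with $h^0(\gamma)=h^0(\gamma')$ must coincide.

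First I would reformulate the hypotheses in t-structure terms. The condition $h^i(\sfC)=0$ for $i\neq 0$ means $\sfC\qis A[0]$, where $A\leteq h^0(\sfC)$ lies in the heart; in particular $\sfC\in D^{\le 0}$. The condition $h^j(\sfC')=0$ for $j<0$ places $\sfC'$ in $D^{\ge 0}$, so its canonical truncation triangle reads $\tau_{\le 0}\sfC'\to\sfC'\to\tau_{\ge 1}\sfC'\xrightarrow{+1}$, with $\tau_{\le 0}\sfC'\qis h^0(\sfC')[0]$. Next I would apply $\Hom(\sfC,-)\cong\Hom(A[0],-)$ to this triangle and inspect the resulting long exact sequence. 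The decisive input is the orthogonality axiom of a t-structure: because $A[0]\in D^{\le 0}$ while both $\tau_{\ge 1}\sfC'$ and its shift $\tau_{\ge 1}\sfC'[-1]$ lie in $D^{\ge 1}$, the two flanking groups $\Hom\bigl(A[0],\tau_{\ge 1}\sfC'\bigr)$ and $\Hom\bigl(A[0],\tau_{\ge 1}\sfC'[-1]\bigr)$ both vanish. Hence the long exact sequence collapses to an isomorphism $\Hom\bigl(\sfC,h^0(\sfC')[0]\bigr)\xrightarrow{\cong}\Hom(\sfC,\sfC')$. Since for two objects of the heart the derived-category $\Hom$ agrees with the $\Hom$ of the abelian category, the left-hand side is just $\Hom\bigl(h^0(\sfC),h^0(\sfC')\bigr)$, and we obtain the desired bijection.

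The one point that requires genuine care — and the only place I expect real work rather than bookkeeping — is verifying that this abstract isomorphism is literally the map $\gamma\mapsto h^0(\gamma)$, and not merely some isomorphism of abelian groups. For this I would exploit that the truncation morphism $\tau_{\le 0}\sfC'\to\sfC'$ induces an \emph{isomorphism} on $h^0$. The vanishing above shows that every $\gamma\colon\sfC\to\sfC'$ factors uniquely through $\tau_{\le 0}\sfC'\qis h^0(\sfC')[0]$; applying the cohomology functor $h^0$ to this factorization and using that the truncation map is an isomorphism on $h^0$ returns exactly $h^0(\gamma)$. Thus $\gamma$ is recovered from $h^0(\gamma)$, which is precisely the uniqueness claimed in the statement.
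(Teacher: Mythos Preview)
Your proof is correct, but it takes a more abstract route than the paper. The paper argues directly with complex representatives: since $h^i(\sfC)=0$ for $i\neq 0$, one may replace $\sfC$ by the single-term complex $h^0(\sfC)$ in degree $0$; since $h^j(\sfC')=0$ for $j<0$, one may replace $\sfC'$ by a complex $\what\sfC$ concentrated in non-negative degrees with $h^0(\what\sfC)\subseteq\what\sfC^0$. A morphism of such complexes then has a single non-zero component, namely the degree-$0$ map, which must land in $h^0(\what\sfC)$ and equals $h^0(\gamma)$. Your approach via the standard t-structure, truncation triangles, and the orthogonality axiom is cleaner about the passage from morphisms of complexes to morphisms in the derived category (the paper's argument tacitly relies on the fact that when the source is a single-term complex one can compute derived Homs by honest chain maps), and it works verbatim in any triangulated category equipped with a t-structure. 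The paper's argument is shorter and avoids invoking the t-structure formalism, at the cost of leaving implicit why a chain-level representative determines the derived-category morphism.
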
 
    
\begin{proof}
  By the assumption, the morphism $\gamma:\sfC\to \sfC'$ may be represented by a
  morphism of complexes $\wt\gamma:\wt\sfC\to\what\sfC$, where $\sfC\simeq \wt\sfC$
  such that $\wt\sfC^0= h^0(\sfC)$ and $\wt\sfC^i=0$ for all $i\neq 0$, and
  $\sfC'\simeq \what\sfC$ such that $h^0(\what\sfC)\subseteq \what\sfC^0$. However
  $\wt\gamma$ has only one non-zero term, $h^0(\gamma)$.
\end{proof}


\def\cprime{$'$} \def\polhk#1{\setbox0=\hbox{#1}{\ooalign{\hidewidth
  \lower1.5ex\hbox{`}\hidewidth\crcr\unhbox0}}} \def\cprime{$'$}
  \def\cprime{$'$} \def\cprime{$'$} \def\cprime{$'$}
  \def\polhk#1{\setbox0=\hbox{#1}{\ooalign{\hidewidth
  \lower1.5ex\hbox{`}\hidewidth\crcr\unhbox0}}} \def\cdprime{$''$}
  \def\cprime{$'$} \def\cprime{$'$} \def\cprime{$'$} \def\cprime{$'$}
\providecommand{\bysame}{\leavevmode\hbox to3em{\hrulefill}\thinspace}
\providecommand{\MR}{\relax\ifhmode\unskip\space\fi MR}
\providecommand{\MRhref}[2]{%
  \href{http://www.ams.org/mathscinet-getitem?mr=#1}{#2}
}
\providecommand{\href}[2]{#2}

\end{document}